\newtheorem{theorem}{Theorem}[section]
\newtheorem{lemma}[theorem]{Lemma}
\newtheorem{prop}[theorem]{Proposition}
\newtheorem{corollary}[theorem]{Corollary}
\theoremstyle{definition}
\newtheorem{defn}[theorem]{Definition}
\theoremstyle{remark}
\newtheorem{rem}[theorem]{Remark}
\DeclareMathOperator{\ct}{ct}
\DeclareMathOperator{\ch}{char}
\DeclareMathOperator{\disc}{disc}
\DeclareMathOperator{\Ht}{Ht}
\DeclareMathOperator{\Htp}{Htp}
\DeclareMathOperator{\ind}{ind}
\DeclareMathOperator{\sgn}{sgn}
\newcommand{\monic}{\mathrm{monic}}
\DeclareMathOperator{\Res}{Res}
\DeclareMathOperator{\Hom}{Hom}
\DeclareMathOperator{\Disc}{Disc}
\DeclareMathOperator{\Ind}{Ind}
\DeclareMathOperator{\Frac}{Frac}
\DeclareMathOperator{\Aut}{Aut}
\newcommand{\GL}{\mathrm{GL}}
\newcommand{\CC}{\mathbb{C}}
\newcommand{\FF}{\mathbb{F}}
\newcommand{\QQ}{\mathbb{Q}}
\newcommand{\PP}{\mathbb{P}}
\newcommand{\RR}{\mathbb{R}}
\newcommand{\ZZ}{\mathbb{Z}}
\renewcommand{\aa}{\mathfrak{a}}
\newcommand{\pp}{\mathfrak{p}}
\newcommand{\E}{\mathcal{E}}
\newcommand{\I}{\mathrm{I}}
\newcommand{\OO}{\mathcal{O}}
\newcommand{\ba}{\overline}
\newcommand{\cross}{\times}
\newcommand{\textand}{\quad \text{and} \quad}
\renewcommand{\to}{\mathop{\rightarrow}\limits}
\newcommand{\size}[1]{\lvert #1 \rvert}
\let\left\mleft
\let\right\mright
\newcommand{\ceil}[1]{\left\lceil #1 \right\rceil}
\newcommand{\1}{\mathbf{1}}
\newcommand{\0}{\mathbf{0}}
\newcommand{\intsec}{\cap}
\newcommand{\bigintsec}{\bigcap}
\newcommand{\isom}{\cong}
\newcommand{\<}{\left\langle}
\renewcommand{\>}{\right\rangle}
\renewcommand{\(}{\left(}
\renewcommand{\)}{\right)}
\newcommand{\ds}{\displaystyle}
\renewcommand{\epsilon}{\varepsilon}
\title{Galois groups of reciprocal polynomials\\and the van der Waerden--Bhargava theorem}
\author{Theresa C. Anderson, Adam Bertelli, and Evan M. O'Dorney}
\begin{document}
  \maketitle
  
  \begin{abstract}
    We study the Galois groups $G_f$ of degree $2n$ reciprocal (a.k.a.\ palindromic) polynomials $f$ of height at most $H$, finding that $G_f$ falls short of the maximal possible group $S_2 \wr S_n$ for a proportion of all $f$ bounded above and below by constant multiples of $H^{-1} \log H$, whether or not $f$ is required to be monic. This answers a 1998 question of Davis--Duke--Sun and extends Bhargava's 2023 resolution of van der Waerden's 1936 conjecture on the corresponding question for general polynomials. Unlike in that setting, the dominant contribution comes not from reducible polynomials but from those $f$ for which $(-1)^n f(1) f(-1)$ is a square, causing $G_f$ to lie in an index-$2$ subgroup.
  \end{abstract}

  
  \section{Introduction}
  For a positive integer $n$, let $E_n(H)$ denote the number of degree $n$ monic separable polynomials $f(x) = x^n + a_{n-1} x^{n-1} + \cdots + a_1 x + a_0$ with integer coefficients $a_i \in [-H, H]$ whose Galois group is \emph{not} $S_n$. Hilbert's Irreducibility Theorem implies that Galois groups not equal to $S_n$ occur $0\%$ of the time, in other words,
  \[
  E_n(H) = o(H^n).
  \]
  In 1936, van der Waerden \cite{vdW1936} gave a quantitative upper bound and conjectured that the true order of growth is
  \begin{equation} \label{eq:vdW}
    E_n(H) \asymp H^{n-1},
  \end{equation}
  the lower bound coming from counting multiples of $x$ (or any fixed monic linear polynomial). For the next several decades, progress continued, with many authors making improvements on the bound, including Knobloch (1955) \cite{Knobloch1955}, Gallagher (1972) \cite{Gallagher1972}, 
  Chow and Dietmann (2020) \cite{CD2020} who proved it for $n \leq 4$, and a group including the first author (2023) \cite{6author}. The conjecture \eqref{eq:vdW} was finally resolved by Bhargava (2023) (\cite{Bhargava_vdW}; an abridged version of the paper has appeared in print \cite{Bhargava_vdW_short}). Bhargava's method harnesses sophisticated known results (classification of subgroups of $S_n$, distribution of discriminants of number fields) in combination with innovative recent methods, including Fourier equidistribution and the use of the \emph{double discriminant} $\disc_{a_n} \disc_x f$.
  
  In this paper, we study the analogous question for the subspace of reciprocal polynomials, which was previously posed by Davis--Duke--Sun \cite{DDS1998}. A polynomial $f$ is called \emph{reciprocal} if
  \[
    f(1/x) = \frac{1}{x^{\deg f}} f(x),
  \]
  in other words, if the coefficient list of $f$ is palindromic. We focus on the case where $f$ is of even degree $2n$ (in the odd-degree case, $f$ must be reducible, equal to $x+1$ times an even-degree reciprocal polynomial). The roots of $f$ come in reciprocal pairs $\{\alpha, 1/\alpha\}$; the Galois group must preserve the partition into pairs and thus is a subgroup of the wreath product $S_2 \wr S_n$, the subgroup of $S_{2n}$ of order $2^n n!$ preserving this partition. In this paper, we provide a precise estimate for how often the group is strictly smaller:
  \begin{theorem} \label{thm:main} Let $\E_n^\monic(H)$ be the number of separable monic reciprocal polynomials $f$ of degree $2n$ with coefficients in $[-H,H]$ whose Galois group is not $S_2 \wr S_n$. Then for each $n \geq 2$,
    \[
    \E_n^\monic(H) \asymp H^{n - 1} \log H.
    \]
  \end{theorem}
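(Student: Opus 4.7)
The dominant obstruction comes from a single algebraic identity, and my plan is to isolate that identity, derive the $H^{n-1} \log H$ lower bound and the matching $(\star)$-piece of the upper bound from it directly, and then show that all remaining sources of Galois shrinkage contribute only $O(H^{n-1})$. The key identity is that, for a monic reciprocal $f$ of degree $2n$ with roots $\{\alpha_i, \alpha_i^{-1}\}_{i=1}^n$, expansion of $f(\pm 1) = \prod_i (\pm 1 - \alpha_i)(\pm 1 - \alpha_i^{-1})$ yields
\[
(-1)^n f(1) f(-1) \;=\; \Bigl(\prod_{i=1}^n (\alpha_i - \alpha_i^{-1})\Bigr)^{\!2} \;=:\; \delta_f^2.
\]
The stabilizer of $\delta_f$ in $S_2 \wr S_n$ is the index-$2$ subgroup $W_n$ cut out by the sign character $(\sigma_1, \ldots, \sigma_n; \tau) \mapsto \prod_{i=1}^n \sgn(\sigma_i)$; moreover, since a $k$-cycle in $S_n$ acts on the $2n$ roots as two disjoint $k$-cycles (so is even in $S_{2n}$) while a single in-pair swap is a transposition, one has $W_n = (S_2 \wr S_n) \cap A_{2n}$. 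For separable $f$, the condition
\[
(\star) \quad (-1)^n f(1) f(-1) \text{ is a perfect square}
\]
is therefore equivalent to $\disc f$ being a square and to $G_f \subseteq W_n$.

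For the lower bound and the matching upper bound on the $(\star)$-contribution, I would parameterize monic reciprocal $f$ with $|a_i| \leq H$ by $(a_1, \ldots, a_n) \in \ZZ^n$ and observe that the linear forms $L_\pm := f(\pm 1)$ are $\ZZ$-linearly independent. Fixing $a_3, \ldots, a_n$ and letting $(a_1, a_2)$ vary in $[-H, H]^2$, the pair $(L_+, L_-)$ sweeps out a coset of a fixed-index sublattice of $\ZZ^2$ inside a box of sidelength $\asymp H$. The condition $(\star)$ forces $L_+$ and $L_-$ to share the same squarefree core (with a sign depending on the parity of $n$); stratifying by this common core $d$ gives $\asymp H/d$ admissible pairs, summing to
\[
\sum_{\substack{d \leq H \\ d \text{ squarefree}}} \frac{H}{d} \;\asymp\; H \log H.
\]
Multiplying by $H^{n-2}$ for the remaining coefficients, and absorbing the non-separable locus into an $O(H^{n-1})$ error, yields $\E_n^\monic(H) \gg H^{n-1} \log H$ as well as a matching upper bound $O(H^{n-1} \log H)$ on the $(\star)$-contribution.

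It remains to bound separable $f$ with $G_f \subsetneq S_2 \wr S_n$ violating $(\star)$ by $O(H^{n-1})$. Any such $G_f$ lies in some maximal subgroup $M \neq W_n$ of $S_2 \wr S_n$; by the classification of maximal subgroups of wreath products, $M$ is either (i) intransitive, so $f$ is reducible --- factoring either as a product of reciprocal polynomials or as $g(x) \cdot x^{\deg g} g(1/x)$ for some non-reciprocal $g$ --- (ii) transitive but imprimitive with a block system other than the reciprocal pairs, or (iii) primitive on $2n$ letters. Case (i) is elementary, yielding $O(H^{n-1})$ by enumeration of factorizations. For (ii) and (iii) I plan to adapt Bhargava's framework \cite{Bhargava_vdW}: each such $M$ constrains a resolvent invariant of $f$ (the discriminant, or a double discriminant $\disc_{a_i} \disc_x f$) to a thin subset of $\ZZ$, upon which Fourier equidistribution yields the $O(H^{n-1})$ bound.

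The main obstacles I foresee are twofold: (a) verifying that $W_n$ is the \emph{only} maximal subgroup of $S_2 \wr S_n$ whose defining invariant is of quadratic degree in the coefficients --- otherwise another $M$ would create a competing $H^{n-1} \log H$ term rather than $H^{n-1}$ --- and (b) reworking Bhargava's Fourier-equidistribution step on the reciprocal-symmetric sublattice $\{a_i = a_{2n-i}\} \subset \ZZ^{2n}$ rather than on the full $\ZZ^{2n}$, where the restricted exponential sums will require a fresh analysis.
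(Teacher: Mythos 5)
Your identification of the main term is correct and matches the paper: your $W_n$ is exactly the group the paper calls $G_1$, your identity $(-1)^n f(1)f(-1)=\delta_f^2$ is the content of the paper's Lemma \ref{lem:disc_f} (via $f(\pm1)=\pm g(\pm2)$, $\delta_f^2=\prod_i(\beta_i^2-4)$), and your count of pairs $(f(1),f(-1))$ with common squarefree core, giving $\asymp H\log H$ pairs times $\asymp H^{n-2}$ fibers, is essentially the paper's proof of Theorem \ref{thm:G1}. So the lower bound and the $(\star)$-locus upper bound are in good shape (modulo the mod-$4$ congruence bookkeeping you allude to).

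The genuine gap is in the complementary upper bound, and it starts with the group theory. Every subgroup of $S_2\wr S_n$ preserves the partition of the $2n$ roots into reciprocal pairs, so your case (iii) ``primitive on $2n$ letters'' is vacuous, and your trichotomy misses precisely the subgroups whose \emph{only} block system is the pairs: these include the hardest case, $G_2=\{(\mathbf{v},\sigma):\sum_i v_i=\sgn\sigma\}$, as well as the fiber-type groups $\FF_2^n\rtimes S'$ with $S'$ primitive in $S_n$ (e.g.\ $S'=A_n$); none of these is intransitive, nor does it admit a block system other than the pairs, and $G_3=\<\1\>\times S_n$ (for $n$ odd) gives \emph{irreducible} $f$, so it is not covered by your reducibility case either. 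Your plan for all of these is ``adapt Bhargava on the reciprocal sublattice,'' which you yourself flag as needing fresh analysis --- but that is exactly where the paper's work lies. The paper first substitutes $u=x+1/x$ to replace $f$ by a degree-$n$ polynomial $g$, which lets it dispose of every $\FF_2^n\rtimes S'$ at once by quoting Bhargava's theorem for $G_g\subsetneq S_n$ ($O(H^{n-1})$ monic $g$), and reduces $G_2$ to the arithmetic condition that $(-1)^n g(2)g(-2)\disc g$ be a square; bounding that count by $O(H^{n-1})$ requires a modified sieve (a new ``pointed'' splitting-type Fourier lemma at $u=\pm2$, a decomposition into lattice pieces before Poisson summation, and a modified double discriminant $\Res_{b_0}(h,\partial h/\partial b_0)$ with $h=g(2)g(-2)\disc g$), and note that a bound of the shape $O(H^{n-1+\epsilon})$ would \emph{not} suffice for the stated $\asymp H^{n-1}\log H$. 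Finally, $G_3$ is handled in the paper not by any resolvent/Fourier argument but by factoring $f$ over a quadratic field $\QQ(\sqrt{k})$ and counting with Weil heights (with units entering in the monic case). As it stands, your proposal proves the lower bound and the $G_1$ upper bound, but the $O(H^{n-1})$ (or even $O(H^{n-1}\log H)$) bound for everything outside $G_1$ is asserted rather than proved, and the classification on which that assertion rests is incorrect.
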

  \begin{rem}
  Here and throughout the paper, if $f(H), g(H)$ are real-valued functions of a sufficiently large real number $H$, then the usual notations
  \[
    f(H) \ll g(H), \quad g(H) \gg f(H), \quad f(H) = O\bigl(g(H)\bigr)
  \]
  mean that $\size{f(H)} < c \cdot \size{g(H)}$ for sufficiently large $H$ and some constant $c$, while the notations
  \[
    f(H) \asymp g(H), \quad f(H) = \Theta\bigl(g(H)\bigr)
  \]
  mean that $f(H) \ll g(H)$ and $f(H) \gg g(H)$. Finally, $f(H) = o\bigl(g(H)\bigr)$ means that $\lim_{H \to \infty} f(H)/g(H) = 0$. The implied constants may depend on $n$ but not on $H$.
  \end{rem}
  
  Theorem \ref{thm:main} is significant for several reasons:
  \begin{itemize}
    \item This is a sharp improvement on the work of Davis--Duke--Sun \cite{DDS1998}, who show $\E_n^\monic(H) \ll H^{n-1/2} \log H$.
    \item The correct order of growth is \emph{not} $H^{n-1}$, as one might expect by counting the reducible polynomials by analogy with van der Waerden's conjecture. Instead, the dominant contribution comes from reciprocal polynomials $f$ such that $(-1)^n f(1) f(-1)$ is a square. This makes $\disc f$ a square and causes the Galois group to lie in an index-$2$ subgroup, called $G_1$ in the classification below.
    \item Since the characteristic polynomial of a symplectic matrix is reciprocal, a potential application of this work is to understand the characteristic polynomials of random elements of $\operatorname{Sp}_{2n}(\ZZ)$, extending the work of Rivin \cite{Rivin08} for $\operatorname{SL}_n(\ZZ)$ and that of the first author and Lemke Oliver \cite{ALO}.  Additionally, there are interesting connections to results in quantum chaos (see below).
  \end{itemize}
  
  As with Bhargava \cite{Bhargava_vdW} and many other papers on van der Waerden's conjecture, the methods are largely indifferent to whether we look at monic polynomials or general non-monic polynomials $f$ with all coefficients ranging through a box $[-H,H]^{n+1}$. The analogue of Theorem \ref{thm:main} for the non-monic setting is:
  \begin{theorem} \label{thm:main_nonmonic} Let $\E_n(H)$ be the number of separable reciprocal polynomials $f$ of degree $2n$ with coefficients in $[-H,H]$ whose Galois group is not $S_2 \wr S_n$. Then for all $n \geq 1$,
    \[
    \E_n(H) \asymp H^{n} \log H.
    \]
  \end{theorem}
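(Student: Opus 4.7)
The plan is to use the classical bijection $f(x) = x^n g(x + x^{-1})$ between degree-$2n$ reciprocal polynomials $f$ and arbitrary degree-$n$ polynomials $g$, which is $\ZZ$-linear on coefficients, preserves the leading coefficient ($a_{2n} = b_n$), and sends $[-H,H]^{n+1}$ bijectively onto a subset of a box of size $O_n(H)$ in the coefficient space of $g$ (and vice versa).  Writing $y_i$ for the roots of $g$ and $\alpha_i, \alpha_i^{-1}$ for the paired roots of $f$, the group $\mathrm{Gal}(f/\QQ)$ sits inside $S_2 \wr S_n$ with image $\mathrm{Gal}(g/\QQ) \subseteq S_n$ and vertical kernel $K := \mathrm{Gal}(f/\QQ) \cap S_2^n \subseteq \FF_2^n$; when $\mathrm{Gal}(g/\QQ) = S_n$, the only $S_n$-stable subgroups of $\FF_2^n$ are $\{0\}$, $\langle(1,\dots,1)\rangle$, the sum-zero hyperplane, and all of $\FF_2^n$, and only the last yields $\mathrm{Gal}(f/\QQ) = S_2 \wr S_n$.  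A direct calculation using $(\alpha_i - \alpha_j)(\alpha_i^{-1} - \alpha_j^{-1})(\alpha_i - \alpha_j^{-1})(\alpha_i^{-1} - \alpha_j) = (y_i - y_j)^2$ and $(\alpha_i - \alpha_i^{-1})^2 = y_i^2 - 4$ yields the identity
\[
\disc(f) \;=\; (-1)^n f(1) f(-1) \cdot \disc(g)^2,
\]
so $(-1)^n f(1) f(-1)$ is a rational square iff $\disc(f)$ is, iff $\mathrm{Gal}(f/\QQ) \subseteq G_1 := (S_2 \wr S_n) \cap A_{2n}$ (this is the subcase $K \supseteq$ sum-zero).

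For the lower bound, I count $f$ with $(-1)^n f(1) f(-1)$ a nonzero perfect square; all such $f$ are bad.  Setting $u := f(1)$ and $v := f(-1)$, the partial linear map $(a_0, a_1) \mapsto (u, v)$ with $a_2, \ldots, a_n$ held fixed has matrix $\bigl(\begin{smallmatrix} 2 & 2 \\ 2 & -2 \end{smallmatrix}\bigr)$ of determinant $-8$, so $(a_0, \ldots, a_n) \leftrightarrow (u, v, a_2, \ldots, a_n)$ is a bijective affine change of coordinates of bounded Jacobian.  The standard parametrization $u = \epsilon_1 k \ell^2$, $v = \epsilon_2 k m^2$ with $k$ squarefree and $\epsilon_1 \epsilon_2 = (-1)^n$ gives
\[
\#\bigl\{(u, v) \in [-cH, cH]^2 : (-1)^n u v \text{ is a nonzero perfect square} \bigr\} \;\asymp\; \sum_{\substack{k \leq cH \\ \mu^2(k) = 1}} \frac{H}{k} \;\asymp\; H \log H,
\]
and for each such $(u, v)$ in the bulk region $|u|, |v| \leq H/4$ there are $\gg H^{n-1}$ tuples $(a_2, \ldots, a_n) \in [-H, H]^{n-1}$ whose associated $(a_0, a_1)$ lies in $[-H, H]^2$; multiplying gives $\gg H^n \log H$ bad polynomials, as required.

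For the upper bound I partition the bad $f$ by the pair $(\mathrm{Gal}(g), K)$.  The case $\mathrm{Gal}(g) \neq S_n$ contributes $O(H^n)$ via Bhargava's non-monic van der Waerden bound \cite{Bhargava_vdW} applied to $g$.  The case $\mathrm{Gal}(g) = S_n$ with $K = $ sum-zero is the square condition above, of size $O(H^n \log H)$ by the same divisor sum.  The remaining subcases $K \in \{\{0\}, \langle(1,\ldots,1)\rangle\}$ force additional multiplicative dependences among the $y_i^2 - 4$ in the splitting field $L_g$: here lies the main obstacle, since $L_g$ varies with $f$ and the condition ``$(y_i^2 - 4)(y_j^2 - 4) \in (L_g^\times)^2$'' is not a polynomial identity on the $a_i$ and so cannot be counted directly.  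I would handle these by mirroring the Galois-theoretic framework used for the monic Theorem~\ref{thm:main} (which must already control these thin strata, including the family of reducible $f = h(x) h^*(x)$ with $h^*(x) := x^n h(1/x)$) and verifying that allowing $a_{2n}$ to range freely over $[-H, H]$ only contributes a uniform extra factor of $H$, giving $O(H^n)$ for each stratum.  Combined with the routine $O(H^n)$ contributions from inseparable $f$ and from $f$ with $a_{2n} = 0$, this yields $\E_n(H) = O(H^n \log H)$.
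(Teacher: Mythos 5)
Your lower bound and your reduction via $f(x) = x^n g(x + x^{-1})$ match the paper's, but the upper bound has a genuine gap at exactly the point where the real work lies. Your stratification conflates ``$K = G_f \cap \FF_2^n$ is the sum-zero hyperplane'' with ``$G_f \subseteq G_1 = (S_2 \wr S_n) \cap A_{2n}$,'' and these are not equivalent: there are two conjugacy classes of index-$2$ subgroups of $S_2 \wr S_n$ whose intersection with $\FF_2^n$ is $\<\1\>^\perp$, namely $G_1$ and the twisted group $G_2 = \{(\mathbf{v},\sigma) : \sum_i v_i = \sgn\sigma\}$ (they correspond to the two classes in $H^1(S_n,\FF_2) = \Hom(S_n,C_2)$). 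For $G_2$-polynomials $\disc f$ is \emph{not} a square; rather $(-1)^n f(1)f(-1)\disc g$ is a square, so these $f$ are invisible to your divisor-sum count of square products, and your proposal gives no bound for them at all. This stratum is the heart of the matter: the paper spends its longest section proving $\E_n(G_2;H) \ll H^n$ by a van der Waerden--type sieve (splitting by the size of $D = \Disc K_g$ and its radical, index/splitting-type bounds as in Lemma \ref{lem:index_tame}, twisted Poisson summation with the modified weights of Lemma \ref{lem:ftgen_pointed}, and a modified double discriminant for the large-radical case). Nothing resembling this appears in your argument, and no soft argument (Hilbert irreducibility, divisor sums, Bhargava's theorem applied to $g$ alone) yields even $O(H^n \log H)$ here, since the condition involves $\disc g$ itself being a square times $f(1)f(-1)$.

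The remaining strata $K \in \{\{0\}, \<\1\>\}$ (the reducible $f = c\,h(x)\,x^n h(1/x)$ and the $G_3 \cong S_2 \times S_n$ case) are also not actually handled: deferring them to ``the framework used for the monic Theorem \ref{thm:main}'' is circular, since that theorem is the companion result being proved in the same paper (and the paper in fact proves the non-monic case first), and the claim that freeing $a_{2n}$ costs only a uniform factor of $H$ is an unproved heuristic. The paper treats these strata directly: over the quadratic field $K_2$ one factors $f$ as in \eqref{eq:fzn_G3}, controls the content ideal $\aa$ (self-conjugate, hence a rational multiple of a product of ramified primes), and bounds projective versus affine heights of the factor $h$ to get $\E_n(G_3;H) \ll H^{(n+1)/2}$ (with a $\log^2$ for $n=3$), plus a separate height argument for the reducible case. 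Two smaller points in your lower bound are fixable but should be addressed: the map $(a_0,a_1) \mapsto (f(1),f(-1))$ has determinant $-8$, so not every pair $(u,v)$ is attained and you must impose congruence conditions (the paper arranges $x \equiv y \bmod 4$, e.g.\ by taking $4 \mid k$), and you must discard the inseparable $f$, which are negligible.
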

  \begin{rem}
    Here the range of applicability is $n \geq 1$. In Theorem \ref{thm:main}, we must exclude $n = 1$, because a monic reciprocal quadratic polynomial $f(x) = x^2 + a_1 x + 1$ has full Galois group $S_2$ for all $a_1 \neq \pm 2$.
  \end{rem}
  
  Note that a degree $2n$ polynomial $f$ is reciprocal if and only if the Cayley-transformed polynomial
  \[
  \widetilde{f}(x) = (1+x)^{2n} f\(\frac{1-x}{1+x}\)
  \]
  is even, that is, $\widetilde{f}(x) = \widetilde{g}(x^2)$ for a polynomial $\widetilde g$. (This is a straightforward computation and well known, see for instance \cite[p.~275]{MP2017}.) For an even polynomial, the roots again come in pairs $\{\alpha, -\alpha\}$. So we also have the following corollary:
  \begin{corollary} \label{cor:even}
    The number of degree $2n$ even polynomials $\widetilde f$ with coefficients in $[-H,H]$ whose Galois group is not $S_2 \wr S_n$ is $\Theta(H^n \log H)$.
  \end{corollary}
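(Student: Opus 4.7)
The plan is to derive Corollary \ref{cor:even} from Theorem \ref{thm:main_nonmonic} via the Cayley transform already introduced. Setting $T$ to be the operator $(Th)(x) = (1+x)^{2n} h\bigl((1-x)/(1+x)\bigr)$, my first step is to check, via direct binomial expansion, that $T$ is $\ZZ$-linear, interchanges reciprocal and even polynomials of degree $2n$, and multiplies heights by at most a constant $C_n$ depending only on $n$.

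The second step is to verify that $T$ preserves Galois groups as subgroups of $S_2 \wr S_n$. Since the substitution $y \mapsto (1-y)/(1+y)$ is defined over $\QQ$, the polynomials $h$ and $Th$ share a splitting field, with a Galois-equivariant root correspondence $\alpha_i \leftrightarrow \beta_i = (1-\alpha_i)/(1+\alpha_i)$. A short calculation then shows that reciprocal pairs $\{\alpha, 1/\alpha\}$ are sent to even pairs $\{\beta, -\beta\}$, so both realizations as subgroups of $S_2 \wr S_n$ agree.

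With these two ingredients in hand, the corollary follows quickly. For the upper bound, any separable even integer polynomial of height $\leq H$ with Galois group not $S_2 \wr S_n$ maps under $T$ to a separable reciprocal integer polynomial of height $\leq C_n H$ with the same Galois group, giving an injection into $\E_n(C_n H) \ll H^n \log H$ by Theorem \ref{thm:main_nonmonic}. The lower bound is symmetric: applying $T$ to the reciprocal polynomials counted by Theorem \ref{thm:main_nonmonic} and rescaling $H \mapsto H/C_n$ yields $\gg H^n \log H$ even polynomials with non-maximal Galois group.

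The only care needed is the edge case where $T\widetilde f$ drops in degree below $2n$: this happens precisely when $\widetilde f(\pm 1) = 0$, forcing $(x^2 - 1) \mid \widetilde f$ (since $\widetilde f$ is even), which cuts out a codimension-one sublattice of even polynomials contributing only $O(H^n) = o(H^n \log H)$. The analogous remark absorbs the contribution of non-separable polynomials. Beyond this routine bookkeeping, I anticipate no real obstacle: Corollary \ref{cor:even} is a direct translation of Theorem \ref{thm:main_nonmonic} under an explicit invertible $\QQ$-linear change of variables.
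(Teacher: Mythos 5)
Your proposal is correct and is essentially the paper's own argument: the corollary is deduced directly from Theorem \ref{thm:main_nonmonic} via the Cayley transform $\widetilde{f}(x) = (1+x)^{2n} f\bigl((1-x)/(1+x)\bigr)$, which is $\QQ$-linear, bounded on heights, exchanges reciprocal and even polynomials, and identifies the Galois groups together with their block structures inside $S_2 \wr S_n$. Your extra bookkeeping (the degree-drop locus $\widetilde f(\pm 1)=0$ and non-separable polynomials contributing only $O(H^n)$) is exactly the routine verification the paper leaves implicit.
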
  
  Note that in this setting, the condition for the Galois group to lie in $G_1$ is that the product $(-1)^n a_{2n} a_0$ of the first and last coefficients of $\widetilde f$ be a square. If we impose $a_{2n} = 1$, the likelihood of this rises to $O(H^{-1/2})$, so the na\"ive analogue of Corollary \ref{cor:even} in the monic setting is false.

As alluded to earlier, there are potential applications of our work to the area of quantum chaos.  In particular, studying the mass of eigenfunctions in quantum chaos is an area of great mathematical interest.  This area shares exciting connections with reciprocal polynomials via quantum cat maps, a toy model of study in the area, that are given by symplectic matrices $A$.  In particular, an important object to study the mass of eigenfunctions is the semiclassical measure $\mu$ associated to $A$.  It turns out that $\mu$ has nice support properties if the characteristic polynomial of $A^m$ for all $m \in \mathbb{N}$ (including $A$ itself) is irreducible over the integers.  In an appendix to a forthcoming paper of Elena Kim, the first author and Lemke Oliver show that this irreducibility happens 100\% of the time and moreover that the generic Galois group of such polynomials is the wreath product $S_2 \wr S_n$ (see \cite{ALO} for more precise information and connections).
  
Based on this recent result, we conjecture that if $M$ is a random $n \times n$ matrix, then the characteristic polynomials not only of $M$ but of its powers $M^i, i \geq 1$, are almost surely $S_n$, where ``almost surely'' entails error bounds of van der Waerden type. This can be interpreted as saying that, if $f$ is the characteristic polynomial, then not only its root $\alpha$ but each of its powers $\alpha^i$, $i \geq 1$, generates an $S_n$-extension of $\QQ$. The present work can be regarded as an extension of this to the fractional power $\sqrt{\alpha}$. It is natural to hope that the higher-order roots $\sqrt[m]{\alpha}$ of polynomials $g(x^m)$ can be handled in a similar way, the generic Galois group now being the semidirect product
  \[
  (\mu_m \wr S_n) \rtimes (\ZZ/m\ZZ)^\cross.
  \]
  
  Our methods are parallel to Bhargava's in \cite{Bhargava_vdW}, with judicious modifications when necessary. In Section \ref{sec:recip_polys}, we lay out preliminary facts about reciprocal polynomials. In Section \ref{sec:groups}, we classify maximal subgroups of $S_2 \wr S_n$, and in particular, we show that we can narrow our focus to three groups, which we denote $G_1$, $G_2$, and $G_3$. In Sections \ref{sec:G1}, \ref{sec:G2}, and \ref{sec:G3}, we count the number $\E_n(G_i; H)$ of polynomials having each of those Galois groups (or a subgroup thereof). The $G_1$-polynomials we can count by direct parametrization. To count the $G_3$-polynomials $f$, we take advantage of the fact that $f$ is reducible over a quadratic extension of $\QQ$. The heart of the argument is to handle $G_2$, which plays a challenging role like that of the alternating group $A_n$ in the study of general polynomials. Following Bhargava, we divide up the polynomials into three cases based on the size of the discriminant and its prime divisors. While attacking each case in turn, we are led to apply Fourier analysis for equidistribution and to construct a suitably modified double discriminant. The Fourier analysis is more involved than Bhargava's and involves breaking up the desired count into terms supported on different sublattices of the lattice $V(\ZZ)$ of reciprocal polynomials.  This type of decomposition is used in harmonic analysis to split up a function into pieces with different Fourier properties, but our use is perhaps novel in this setting.
  
  Because Theorems \ref{thm:main} and \ref{thm:main_nonmonic} are so similar, we focus on Theorem \ref{thm:main_nonmonic}, the non-monic setting, which is the technically simpler of the two. At the end of each of Sections \ref{sec:G1}, \ref{sec:G2}, and \ref{sec:G3}, we explain how the proof must be adapted to the monic case to prove Theorem \ref{thm:main}.

\subsection{Acknowledgements} TCA was partially supported by the NSF (grants 2231990, 2237937). We thank Hongyi (Brian) Hu and the rest of the participants in the Research Topics in Mathematical Sciences semester at CMU in spring 2024 for their contributions to the discussion. We thank Igor Shparlinski for insightful comments. We thank Robert Lemke Oliver for a careful read of an earlier draft.
  
  \section{Reciprocal polynomials}\label{sec:recip_polys}
  We define the \emph{height} of an integer polynomial
    \[
      P(x) = c_n x^n + c_{n-1} x^{n-1} + \cdots + c_1 x + c_0 \in \ZZ[x]
    \]
    to be the maximum of the coefficients,
    \[
    \Ht P = \max\bigl\{\size{c_n}, \size{c_{n-1}}, \ldots, \size{c_0}\bigr\} \in \ZZ_{\geq 0}.
    \]
  Let
  \[
  f(x) = a_0 x^{2n} + a_1 x^{2n-1} + \cdots + a_{n-1} x^{n+1} + a_n x^n + a_{n-1} x^{n-1} + \cdots + a_1 x + a_0
  \]
  be a reciprocal polynomial of degree $2n$.
  Note that there is a unique degree $n$ integer-coefficient polynomial
  \[
  g(u) = b_n u^n + b_{n-1} u^{n-1} + \cdots + b_1 u + b_0
  \]
  such that
  \[
  f(x) = x^n g\(x + \frac{1}{x}\).
  \]
  The passage between $f$ and $g$ is bijective and linear, and it increases or decreases heights by at most a bounded factor, depending only on $n$. Hence it is immaterial whether we count $f$ or $g$ of height at most $H$. For most purposes, it is more convenient to count $g$.
  
  We denote the roots of $f$ by
  \[
  \alpha_1, \frac{1}{\alpha_1}, \alpha_2, \frac{1}{\alpha_2}, \ldots, \alpha_n, \frac{1}{\alpha_n}.
  \]
  Then the roots of $g$ are $\beta_1,\ldots, \beta_n$, where
  \[
  \beta_i = \alpha_i + \frac{1}{\alpha_i}.
  \]
  We will sometimes write $\alpha = \alpha_1$ and $\beta = \beta_1$ when the choice of root is irrelevant.
  
  In view of the main theorem we would like to prove, we can assume any statement that occurs for all but $O(H^{n})$ of the $O(H^{n + 1})$ polynomials $g$ of height at most $H$. For example, we may assume that $g$ is irreducible and that $g(2)$ and $g(-2)$ are nonzero. We have the tower of number fields
  \[
  K_f = \QQ(\alpha) \quad \supseteq \quad K_g = \QQ(\beta) \quad \supset \quad \QQ,
  \]
  where $K_g$ is an $S_n$-extension of degree $n$, while $K_f/K_g$ is of degree at most $2$, given by $K_f = K_g(\sqrt{\beta^2 - 4})$. Let $\widetilde{K}_g$ and $\widetilde{K}_f$, respectively, be the splitting fields of $g$ and $f$, and let $G_g$ and $G_f$ be their respective Galois groups, which are subgroups of $S_n$ and $S_2 \wr S_n$, with $G_f \twoheadrightarrow G_g$ under the natural projection $S_2 \wr S_n \twoheadrightarrow S_n$. By the main result of Bhargava \cite[Theorem 1]{Bhargava_vdW}, we can assume that $G_g$ is the whole $S_n$. Our aim in this paper is to understand when $G_f$ is not the whole $S_2 \wr S_n$.
  
  By the usual formula for the discriminant of a number field tower, we have
  \[
  (\Disc K_f) = (\Disc K_g)^2 \cdot N_{K_g/\QQ} \disc_{K_g} K_f
  \]
  as ideals in $\ZZ$. The following is a closely related result on the discriminants of the associated polynomials.
  \begin{lemma} \label{lem:disc_f}
    $\disc f = (-1)^n g(2) g(-2) (\disc g)^2.$
  \end{lemma}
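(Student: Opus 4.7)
The plan is to apply the root formula $\disc f = a_0^{4n-2}\prod_{k<l}(r_k - r_l)^2$ and exploit the reciprocal pair structure of the roots. I would partition the $\binom{2n}{2}$ unordered pairs of root indices into two types: \emph{within-pair} (the $n$ pairs $\{\alpha_i,1/\alpha_i\}$) and \emph{cross-pair} ($4\binom{n}{2}$ pairs combining one element from pair $i$ with one from pair $j$, $i\neq j$). This reduces $\disc f$ to a product of two factors which should repackage in terms of $g(\pm 2)$ and a power of $\disc g$.

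For the within-pair contribution, expand $(\alpha_i - 1/\alpha_i)^2 = \beta_i^2 - 4$ using $\beta_i = \alpha_i + 1/\alpha_i$. Taking the product over $i$ and using $g(u) = b_n\prod_i(u - \beta_i)$, one evaluates $g(2) g(-2) = b_n^2 \prod_i(\beta_i^2 - 4)$, so the within-pair factor equals $g(2)g(-2)/b_n^2$.

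For the cross-pair contribution, the key algebraic identity is
\[
  (\alpha_i - \alpha_j)(\alpha_i - 1/\alpha_j)(1/\alpha_i - \alpha_j)(1/\alpha_i - 1/\alpha_j) = (\beta_i - \beta_j)^2,
\]
which I would prove by grouping the four factors as $[(\alpha_i - \alpha_j)(1/\alpha_i - 1/\alpha_j)]\cdot[(\alpha_i - 1/\alpha_j)(1/\alpha_i - \alpha_j)]$; each bracket simplifies cleanly to $-(\alpha_i-\alpha_j)^2/(\alpha_i\alpha_j)$ and $-(\alpha_i\alpha_j-1)^2/(\alpha_i\alpha_j)$ respectively, and their product matches $(\beta_i - \beta_j)^2 = ((\alpha_i-\alpha_j)(\alpha_i\alpha_j-1)/(\alpha_i\alpha_j))^2$. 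Squaring, the $\{i,j\}$ cross-pair contributes $(\beta_i - \beta_j)^4$, whose product over $i<j$ is $(\disc g)^2/b_n^{4n-4}$.

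Combining, the prefactor $a_0^{4n-2}$ (with $a_0 = b_n$, visible from $f(x) = x^n g(x+1/x)$) cancels the denominators $b_n^2\cdot b_n^{4n-4} = b_n^{4n-2}$, yielding the stated formula. The argument is a short algebraic computation; the only subtlety is bookkeeping of the signs coming out of the cross-pair and within-pair identities to recover the claimed $(-1)^n$. An equivalent route that is perhaps cleaner for this sign tracking would be to use $\disc f = (-1)^{n(2n-1)} a_0^{-1}\Res(f,f')$ together with the fact that, at a root $\alpha_i$ of $f$, $f'(\alpha_i) = \alpha_i^{n-2}(\alpha_i^2 - 1) g'(\beta_i)$ and likewise at $1/\alpha_i$; multiplying these across all $2n$ roots reproduces the same identity and lets one read off the sign directly from $\Res(g,g')$.
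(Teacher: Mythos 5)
Your decomposition is exactly the paper's proof: the paper also starts from $\disc f = a_0^{4n-2}\prod_i(\alpha_i-\alpha_i^{-1})^2\prod_{i<j}(\cdots)^2$, converts the within-pair factors to $\beta_i^2-4$ and the cross-pair blocks to $(\beta_i-\beta_j)^4$ via the same two bracketings you use, and repackages everything with $a_0=b_n$. So in method you and the paper coincide, and your algebra up to that point is correct.

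The issue is the step you defer. Your identities leave no residual sign: the within-pair product is exactly $g(2)g(-2)/b_n^2$, because $(2-\beta)(-2-\beta)=\beta^2-4$ on the nose, and the cross-pair product is exactly $(\disc g)^2/b_n^{4n-4}$. Hence what your computation proves is $\disc f = g(2)\,g(-2)\,(\disc g)^2$, and no bookkeeping will produce the extra $(-1)^n$ in the statement; your alternative resultant route cannot conjure it either. In fact the paper's own proof introduces $(-1)^n$ precisely when rewriting $\prod_i(\beta_i^2-4)$ as $(-1)^n\prod_i(2-\beta_i)\cdot\prod_i(-2-\beta_i)$, which is a sign slip. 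A spot check confirms the sign-free version: for $n=3$, $g(u)=u^3-u$ gives $f=(x^2+1)(x^2+x+1)(x^2-x+1)$ with $\disc f=(-4)(-3)(-3)\cdot 16=-576=6\cdot(-6)\cdot 4^2=g(2)g(-2)(\disc g)^2$, whereas the printed formula would give $+576$. The factor $(-1)^n$ does belong when the formula is expressed through $f$ rather than $g$: since $f(1)=g(2)$ and $f(-1)=(-1)^n g(-2)$, your identity reads $\disc f=(-1)^n f(1)f(-1)(\disc g)^2$, which is the form implicit in the abstract. So finish your argument as a proof of the sign-free identity (or restate the lemma via $f(\pm 1)$) and flag the discrepancy; correspondingly, the criterion in Lemma \ref{lem:G123_conds}\ref{it:G1} should read ``$g(2)g(-2)$ is a square,'' which changes nothing in the subsequent counting.
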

  
  \begin{proof}
    We have
    \begin{align*}
      \disc f &= a_0^{4n - 2} \prod_{i=1}^n (\alpha_i-\alpha_i^{-1})^2 \cdot \prod_{i<j}(\alpha_i-\alpha_j)^2(\alpha_i-\alpha_j^{-1})^2(\alpha_i^{-1}-\alpha_j)^2(\alpha_i^{-1}-\alpha_j^{-1})^2 \\
      &= b_n^{4n - 2} \prod_{i=1}^n (\beta_i^2-4)\cdot \prod_{i<j}\alpha_i^{-4}\alpha_j^{-4}(\alpha_i-\alpha_j)^4(\alpha_i\alpha_j-1)^4\\
      &= (-1)^n \cdot b_n \prod_{i=1}^n (2-\beta_i) \cdot b_n \prod_{i=1}^n (-2-\beta_i) \cdot b_n^{4n - 4} \prod_{i<j}(\beta_i-\beta_j)^4\\
      &= (-1)^n \cdot g(2)\cdot g(-2)\cdot(\disc g)^2. \qedhere
    \end{align*}
  \end{proof}

  \section{Maximal subgroups of \texorpdfstring{$S_2 \wr S_n$}{S₂ ≀ Sₙ}}\label{sec:groups}
  
  If $G_f$ is not the full $S_2 \wr S_n$, it is contained in a maximal subgroup. We have $S_2 \wr S_n = \FF_2^n \rtimes S_n$. The following elementary lemma classifies the maximal subgroups of a semidirect product whose normal factor is abelian.
  
  \begin{lemma}\label{lem:max_semidir}
    Let $X \rtimes S$ be a semidirect product of groups, with $X$ abelian. The maximal subgroups of $X \rtimes S$ are of two types:
    \begin{enumerate}[$($a$)$]
      \item\label{it:fibers} $X \rtimes S'$, for $S' < S$ a maximal subgroup;
      \item\label{it:Y} Those groups $G < X \rtimes S$ for which $Y = G \intsec X$ is a maximal $S$-invariant subgroup of $X$ such that the projection $G \to S$ is surjective.
    \end{enumerate}
  \end{lemma}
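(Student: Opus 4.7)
The plan is to let $G$ be a maximal subgroup of $X \rtimes S$, introduce the projection $\pi : X \rtimes S \to S$, and study $G$ via the two invariants $T = \pi(G) \leq S$ and $Y = G \intsec X = \ker(\pi|_G)$. Since $X$ is normal in $X \rtimes S$, the product $XG$ is again a subgroup; a direct count of cosets shows $XG = X \rtimes T$. This single observation is what drives the case analysis.

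First I would dispose of the case $T \subsetneq S$. Here $XG = X \rtimes T$ contains $G$, and by maximality of $G$ either $XG = G$ or $XG = X \rtimes S$. The latter forces $T = S$, contradicting the case hypothesis, so $XG = G$, i.e.\ $X \subseteq G$, whence $Y = X$ and $G = X \rtimes T$. The lattice of subgroups of $X \rtimes S$ containing $X$ is in bijection (via $\pi$) with the lattice of subgroups of $S$, so maximality of $G$ in $X \rtimes S$ is equivalent to maximality of $T$ in $S$. This gives type \ref{it:fibers}.

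Next I would treat the case $T = S$. Since $G$ surjects onto $S$, conjugation by elements of $G$ realizes the full action of $S$ on $X$; because $Y$ is normal in $G$, this shows $Y$ is $S$-invariant. To see $Y$ is maximal $S$-invariant, suppose $Y \subsetneq Y' \leq X$ with $Y'$ an $S$-invariant subgroup. Then $Y'$ is normalized by $X$ (abelian) and by $S$, hence is normal in $X \rtimes S$, so $Y'G$ is a subgroup strictly containing $G$. By maximality $Y'G = X \rtimes S$; intersecting with $X$ and using a short projection argument (every $x \in X$ can be written $x = y' g$ with $y' \in Y'$, $g \in G$, and applying $\pi$ forces $g \in Y$) yields $Y' = X$. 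This gives type \ref{it:Y}.

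Finally, I would verify the converse directions — that each subgroup of form \ref{it:fibers} or \ref{it:Y} is in fact maximal — which is straightforward: a subgroup strictly larger than $X \rtimes S'$ must project onto a subgroup of $S$ strictly containing $S'$, and a subgroup of type \ref{it:Y} strictly enlarged while still surjecting onto $S$ must have a strictly larger $S$-invariant intersection with $X$. No step looks genuinely difficult here; the only subtle point is the projection argument in the $T = S$ case showing that $Y'G \cap X = Y'$, which relies crucially on $X$ being abelian so that $Y'$ remains normal once it is $S$-invariant.
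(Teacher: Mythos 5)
Your proposal is correct and follows essentially the same route as the paper: split on whether the projection of $G$ to $S$ is surjective, identify $G$ with $X \rtimes S'$ in the non-surjective case, and in the surjective case use that $X$ is abelian and $Y'$ is $S$-invariant to make $GY'$ a subgroup (equivalently, $Y'$ normal) and contradict maximality via the same projection computation $GY' \intsec X = Y'$. The only difference is cosmetic --- you phrase the contradiction as $Y'G = X \rtimes S$ forcing $Y' = X$ rather than exhibiting $G < GY' < X \rtimes S$ --- and you additionally verify the converse maximality of both types, which the paper leaves implicit.
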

  \begin{proof}
    Let $G < X \rtimes S$ be a maximal subgroup, and let $S'$ be the projection of $G$ onto $S$. If $S' \neq S$, then $G \leq X \rtimes S'$, and we must have equality so we get a group of type \ref{it:fibers}.
    
    So we assume $S' = S$. Then $Y = G \intsec X$ is a subgroup of $X$, not the whole of $X$ since $G \neq X \rtimes S$. The conjugation action of $X \rtimes S$ on $X$ is the $S$-action. Since $G$ surjects onto $S$, the fact that $G$ is closed under conjugation by itself implies that $Y$ is closed under the $S$-action. Suppose that $Y$ is not maximal as an $S$-invariant subspace, $Y < Y' < X$. Let 
    \[
      GY' = \{gy : g \in G, y \in Y'\} \subset X \rtimes S.
    \]
    We claim that $GY'$ is a subgroup. Since $G$ and $Y'$ are subgroups, it suffices to show that any product $yg$, $y \in Y'$, $g \in G$, belongs to $GY'$. Write $g = sx$, $x \in X$, $s \in S$. Since $X$ is abelian,
    \[
      yg = ysx = sy^s x = sx y^s = g y^s.
    \]
    Since $Y'$ is $S$-invariant, the last product belongs to $GY'$. So $GY'$ is a subgroup.
    
    It is evident that $GY' \intsec X = Y'$. So $G < GY' < X \rtimes S$, contradicting maximality of $G$. So $Y$ is maximal, and $G$ is a group of type \ref{it:Y}.
  \end{proof}
  
  By Bhargava's result \cite[Theorem 1]{Bhargava_vdW}, the extension $\widetilde{K}_g$ has full Galois group $S_n$ for all but $O(H^{n})$ polynomials $g$ (or $O(H^{n-1})$ in the monic case), so Theorems \ref{thm:main} and \ref{thm:main_nonmonic} are already known for subgroups of type \ref{it:fibers}. To classify the subgroups of type \ref{it:Y}, we must find the maximal $S_n$-invariant subspaces of $X = \FF_2^n$, a vector space equipped with an $S_n$-action permuting the $n$ basis vectors freely. For convenience we let
  \[
  \0 = (0, \ldots, 0), \quad \1 = (1, \ldots, 1) \in X.
  \]
  We use a superscript $\perp$ to denote the orthogonal complement of a space under the $S_n$-invariant inner product
  \[
  (x_1,\ldots, x_n) \bullet (y_1,\ldots, y_n) = \sum_{i = 1}^n x_i y_i.
  \]
  \begin{lemma}\label{lem:invar_subsps}
    Let $X = \FF_2^n$, with the permutation action of $S_n$.
    The $S_n$-invariant subspaces of $X$ are as follows:
    \begin{itemize}
      \item $X$
      \item $\<\1\>^\perp = \{\mathbf{v} = (v_1,\ldots, v_n) : \sum_i v_i = 0\}$
      \item $\<\1\> = \{\0, \1\}$
      \item $\{\mathbf{0}\}$.
    \end{itemize}
  \end{lemma}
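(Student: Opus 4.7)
The plan is to first observe that each listed subspace is $S_n$-invariant (trivially so for $X$, $\{\0\}$, and $\<\1\>$; and for $\<\1\>^\perp$ because the inner product $\bullet$ is itself $S_n$-invariant), and then to show that any nonzero $S_n$-invariant subspace $W \subseteq X$ must coincide with one of $\<\1\>$, $\<\1\>^\perp$, or $X$.

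The key step is to pick an arbitrary nonzero $v = (v_1,\ldots,v_n) \in W$ and consider its weight $k = \#\{i : v_i = 1\}$. If $0 < k < n$, choose indices $i, j$ with $v_i = 1$ and $v_j = 0$; then the transposition $(i\,j) \in S_n$ yields $v + (i\,j) \cdot v = e_i + e_j \in W$, where $e_i$ denotes the $i$-th standard basis vector of $\FF_2^n$. Applying further elements of $S_n$ and closing under addition, every weight-$2$ vector $e_k + e_\ell$ lies in $W$, and these span the codimension-$1$ subspace $\<\1\>^\perp$ of vectors with coordinate sum zero in $\FF_2$.

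To conclude I would run a short case analysis on the weights appearing in $W$. If every nonzero element of $W$ has weight exactly $n$, then $W = \{\0, \1\} = \<\1\>$. Otherwise $W$ contains a vector of weight $k$ with $0 < k < n$, and hence $W \supseteq \<\1\>^\perp$ by the previous step; then either $W = \<\1\>^\perp$, or $W$ also contains some odd-weight vector $w \notin \<\1\>^\perp$, which forces $W \supseteq \<\1\>^\perp + \FF_2 w = X$ by codimension. The argument is elementary; the only mild care-point is that $\<\1\>$ and $\<\1\>^\perp$ are nested rather than complementary when $n$ is even, and even coincide when $n = 2$, but the case analysis above remains valid uniformly, so I do not anticipate a serious obstacle.
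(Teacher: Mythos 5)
Your proposal is correct and follows essentially the same route as the paper: take a vector other than $\0$ and $\1$, add it to its image under a transposition to produce a weight-two vector, use the $S_n$-action to obtain all weight-two vectors, note that these span $\<\1\>^\perp$, and finish by the codimension-one observation. The only difference is cosmetic (organizing the case analysis by weight), so no further comment is needed.
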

  \begin{proof}
    Let $W \subseteq X$ be an invariant subspace. If $W$ contains a vector $\mathbf{v} = (v_1,\ldots,v_n)$ besides $\0$ and $\1$, then upon applying some element of $S_n$ we can assume $v_1 = 1$, $v_2 = 0$. Then $W \ni \mathbf{v} + (12)\mathbf{v} = \<1, 1, 0, \ldots, 0\>$. Applying further permutations from $S_n$, we get that $W$ contains every vector with exactly two nonzero coordinates. Then $W$ contains their span, which is $\<\1\>^\perp$. Hence $W = \<\1\>^\perp$ or $W = X$.
  \end{proof}
  Among these, the maximal subgroups are
  \begin{itemize}
    \item $\<\1\>^\perp$, and
    \item $\<\1\>$, for $n$ odd (since for $n$ even, $\<\1\> \subseteq \<\1\>^\perp$).
  \end{itemize}
  Note that for $n$ odd, we have a direct sum decomposition $X = \<\1\> \oplus \<\1\>^\perp$. We now classify the groups $G$ using group cohomology as follows:
  \begin{lemma}\label{lem:H1 meaning}
    Let $X \rtimes S$ be a semidirect product of groups, with $X$ abelian, and let $K \subseteq X$ be a subgroup fixed by $S$. The subgroups of $G \subseteq X \rtimes S$ such that $G \intsec X = K$ are parametrized by $1$-cocycles $\epsilon \in Z^1(S, X/K)$, in other words, maps $\epsilon : S \to X/K$ satisfying the cocycle condition
    \begin{equation} \label{eq:coho_eps}
      \epsilon(\sigma \tau) = \epsilon(\sigma) + \sigma(\epsilon(\tau)).
    \end{equation}
    The map sends each $\epsilon$ to the group
    \begin{equation}\label{eq:coho_G}
      G = \{(x, \sigma) : x \equiv \epsilon(\sigma) \mod K\}.
    \end{equation}
    Moreover, two such subgroups $G, G'$ are conjugate if and only if the corresponding $\epsilon, \epsilon'$ are in the same cohomology class in $H^1(S, X/K)$; in other words, if there is a $y \in X$ such that for all $\sigma \in S$,
    \begin{equation}\label{eq:cobdry}
      \epsilon'(\sigma) = \epsilon(\sigma) + \sigma(y) - y.
    \end{equation}
  \end{lemma}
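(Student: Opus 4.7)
The plan is to adapt the standard group-cohomology classification of semidirect-product extensions to this setting. The setup is that $X \rtimes S$ has multiplication $(x, \sigma)(y, \tau) = (x + \sigma(y), \sigma \tau)$ with $X$ abelian, and that $K \subseteq X$ is fixed by $S$ so $X/K$ inherits an $S$-action. Throughout, I will work with subgroups $G$ that project onto $S$, which is the case relevant to Lemma \ref{lem:max_semidir}\ref{it:Y}.

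First I would build the map from subgroups to cocycles. Given $G \subseteq X \rtimes S$ with $G \cap X = K$ and projection onto $S$, for each $\sigma \in S$ pick any lift $(x_\sigma, \sigma) \in G$ and set $\epsilon(\sigma) = x_\sigma \bmod K$. This is well-defined, because if $(x'_\sigma, \sigma)$ is another lift, then $(x'_\sigma, \sigma)(x_\sigma, \sigma)^{-1} = (x'_\sigma - x_\sigma, e) \in G \cap X = K$. Multiplying two lifts gives $(x_\sigma + \sigma(x_\tau), \sigma\tau) \in G$, and since the $(x_{\sigma\tau}, \sigma\tau)$-lift is equally valid modulo $K$, we obtain the cocycle condition \eqref{eq:coho_eps}.

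Next I would construct the inverse map. Given a cocycle $\epsilon$, define $G$ by \eqref{eq:coho_G}. The $S$-invariance of $K$ lets $\sigma$ act well-definedly on $X/K$, so the cocycle identity gives $x + \sigma(y) \equiv \epsilon(\sigma) + \sigma(\epsilon(\tau)) \equiv \epsilon(\sigma\tau) \pmod{K}$, verifying closure under multiplication. Setting $\sigma = \tau = e$ in the cocycle identity forces $\epsilon(e) = 0$, so $(0, e) \in G$; a short computation using the cocycle identity at $(\sigma, \sigma^{-1})$ handles inverses. A direct check shows the two constructions are mutually inverse, giving the claimed bijection.

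For the conjugacy statement, I would first show that without loss of generality we may conjugate only by elements of $X$. Indeed, given any $(y, \sigma_0) \in X \rtimes S$, since $G$ surjects onto $S$ we can pick $(x_0, \sigma_0) \in G$ and write $(y, \sigma_0) = (y - x_0, e)(x_0, \sigma_0)$; conjugation by $(x_0, \sigma_0) \in G$ fixes $G$ setwise, so conjugation by $(y, \sigma_0)$ has the same effect as conjugation by $(y - x_0, e) \in X$. A direct computation shows
\[
(y, e)^{-1}(x, \sigma)(y, e) = (x + \sigma(y) - y, \sigma),
\]
so the cocycle of the conjugated subgroup is $\epsilon'(\sigma) = \epsilon(\sigma) + \sigma(y) - y$, which is precisely the coboundary relation \eqref{eq:cobdry}. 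Conversely, any $y \in X$ gives a legitimate conjugation, and this identifies conjugacy classes with cohomology classes.

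The only mildly delicate point will be that last reduction from arbitrary conjugation in $X \rtimes S$ down to conjugation by $X$; everything else is bookkeeping with the semidirect-product operation. Because $X$ is abelian and $K$ is $S$-invariant, the standard $H^1$ machinery applies without modification once this reduction is in hand.
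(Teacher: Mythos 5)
Your proposal is correct and takes essentially the same route as the paper, whose proof is only a sketch of this standard cohomological bookkeeping: the same bijection via lifts of $\sigma$ modulo $K$, and the same key observation that, because $G$ surjects onto $S$, conjugation by elements of $X$ already yields all conjugates of $G$ in $X \rtimes S$. (The only cosmetic issue is that your reduction step implicitly uses the convention $gGg^{-1}$ while your coboundary computation uses $g^{-1}(x,\sigma)g$; this is harmless, since the existential quantifier over $y \in X$ in \eqref{eq:cobdry} absorbs the resulting sign.)
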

  \begin{proof}
    This is a fairly standard use of group cohomology. Indeed, it is easy to check that $G$ must have the form \eqref{eq:coho_G}, that closure under multiplication enforces \eqref{eq:coho_eps}, and that conjugation by $y \in X$ induces \eqref{eq:cobdry}. (Since $G$ surjects onto $S$, conjugation by $X$ is sufficient to produce all the conjugates of $G$ in $X \rtimes S$.)
  \end{proof}
  
  \begin{theorem} \label{thm:G_i}
    The maximal subgroups of $S_2 \wr S_n$ whose projection onto $S_n$ is the whole group are, up to conjugation, as follows:
    \begin{alignat*}{2}
      G_1 &= \left\{(\mathbf{v}, \sigma) \in \FF_2^n \rtimes S_n : \sum_i v_i = 0\right\} = \<\1\>^\perp \rtimes S_n, &\quad n &\geq 1 \\
      G_2 &= \left\{(\mathbf{v}, \sigma) \in \FF_2^n \rtimes S_n : \sum_i v_i = \sgn \sigma\right\}, &\quad n &\geq 2 \\
      G_3 &= \<\1\> \cross S_n, &\quad n &\geq 3 \text{ odd}
    \end{alignat*}
  \end{theorem}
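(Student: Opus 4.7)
The plan is to apply Lemmas \ref{lem:max_semidir}, \ref{lem:invar_subsps}, and \ref{lem:H1 meaning} in sequence, with $X = \FF_2^n$. Because the theorem restricts to subgroups surjecting onto $S_n$, Lemma \ref{lem:max_semidir} reduces us to maximal subgroups $G \subseteq X \rtimes S_n$ for which $K = G \intsec X$ is a maximal $S_n$-invariant subgroup of $X$. By Lemma \ref{lem:invar_subsps} the only candidates are $K = \<\1\>^\perp$ (for any $n \geq 1$) and $K = \<\1\>$ (only when $n$ is odd, since otherwise $\<\1\> \subseteq \<\1\>^\perp$ is not maximal). For each $K$, Lemma \ref{lem:H1 meaning} identifies the conjugacy classes of $G$ with $G \intsec X = K$ with elements of $H^1(S_n, X/K)$, and it remains only to enumerate these cohomology groups and translate each cocycle into an explicit subgroup via \eqref{eq:coho_G}.

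\textbf{Case $K = \<\1\>^\perp$.} The quotient $X/K \isom \FF_2$ has trivial $S_n$-action, so $H^1(S_n, X/K) = \Hom(S_n, \FF_2)$. For $n \geq 2$ this group has exactly two elements: the trivial homomorphism gives $G_1 = \<\1\>^\perp \rtimes S_n$, and the sign homomorphism gives, via \eqref{eq:coho_G}, the group $G_2 = \{(\mathbf v, \sigma) : \sum_i v_i \equiv \sgn \sigma \pmod 2\}$. For $n = 1$ the homomorphism group is trivial and only $G_1$ appears.

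\textbf{Case $K = \<\1\>$, $n$ odd.} Since $n$ is odd, $X = \<\1\> \oplus \<\1\>^\perp$ as $S_n$-modules, hence $X/K \isom \<\1\>^\perp$, and the task reduces to showing $H^1(S_n, \<\1\>^\perp) = 0$. The splitting induces a direct-sum decomposition $H^1(S_n, X) = H^1(S_n, \<\1\>) \oplus H^1(S_n, \<\1\>^\perp)$. Now $X = \Ind_{S_{n-1}}^{S_n}(\FF_2)$ as a permutation module, so Shapiro's lemma gives $H^1(S_n, X) \isom H^1(S_{n-1}, \FF_2) = \Hom(S_{n-1}, \FF_2) \isom \FF_2$ for $n \geq 3$. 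On the other hand $H^1(S_n, \<\1\>) = \Hom(S_n, \FF_2) \isom \FF_2$. Hence the two summands of $H^1(S_n, X)$ have the same $\FF_2$-dimension, forcing $H^1(S_n, \<\1\>^\perp) = 0$. Consequently the unique conjugacy class in this case is the split extension $G_3 = \<\1\> \cross S_n$.

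The principal obstacle I foresee is the vanishing $H^1(S_n, \<\1\>^\perp) = 0$ in the second case: it is this which makes $G_3$ unique up to conjugation. The Shapiro's lemma route seems cleanest, but one could equivalently give a hands-on argument by showing that any 1-cocycle $\epsilon : S_n \to \<\1\>^\perp$ is cohomologically trivial, using the cocycle relation on a generating set of transpositions to exhibit an explicit $y \in \<\1\>^\perp$ with $\epsilon(\sigma) = \sigma(y) - y$.
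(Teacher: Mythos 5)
Your proposal is correct and follows essentially the same route as the paper: reduce via Lemma \ref{lem:max_semidir} and Lemma \ref{lem:invar_subsps} to the two maximal invariant subspaces, then compute $H^1(S_n, X/K)$, using $\Hom(S_n, C_2)$ for $K = \<\1\>^\perp$ and Shapiro's lemma together with the odd-$n$ splitting $X = \<\1\> \oplus \<\1\>^\perp$ for $K = \<\1\>$. Your phrasing of the vanishing step as a dimension count in the direct-sum decomposition of $H^1(S_n, X)$ is just a slightly more explicit rendering of the paper's quotient computation, not a different argument.
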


  \begin{proof}
    By the preceding lemmas, we are left with computing $H^1(S_n, X/Y)$ for each of the maximal $S_n$-invariant subspaces $Y$ in Lemma \ref{lem:invar_subsps}.
    
    If $Y = \<\1\>^\perp$, then
    \[
    H^1(S_n, X/\<\1\>^\perp)
    = H^1(S_n, C_2)
    = \Hom(S_n, C_2) = C_2,
    \]
    the two maps $\epsilon$ being the zero map and the sign map, giving the subgroups $G_1$ and $G_2$ claimed.
    
    If $Y = \<\1\>$ for $n$ odd, we must compute
    \[
    H^1(S_n, X/\<\1\>).
    \]
    Since $n$ is odd, we have that $X = \<\1\> \oplus \<\1\>^\perp$ is a direct sum and $X/\<\1\> \isom \<\1\>^\perp$. First consider
    \[
    H^1(S_n, X).
    \]
    Note that with this action, $X = \Ind_{S_{n-1}}^{S_n} C_2$ is an induced module, so by Shapiro's lemma,
    \begin{align*}
      H^1(S_n, X) &= H^1(S_{n-1}, C_2) \\
      &= \Hom(S_{n-1}, C_2) \\
      &= C_2.
    \end{align*}
    Hence
    \[
    H^1(S_n, X/\<\1\>) = \frac{H^1(S_n, X)}{H^1(S_n, C_2)} = 0.
    \]
    Therefore there is only the trivial extension $G_3$.
    
    The restrictions on $n$ are provided due to the fact that, for some $n$, the $G_i$ are nonmaximal or coincident:
    \begin{itemize}
      \item For $n = 1$, $G_2 = G_1$ and $G_3 = S_2 \wr S_n$;
      \item For $n$ even, $G_3 \subseteq G_1$. \qedhere
    \end{itemize}
  \end{proof}

  \begin{rem}
    A proof of Theorem \ref{thm:G_i} without group cohomology is possible, but it involves some tedious case analysis with many computations of products in $S_n$ and $S_2 \wr S_n$.
  \end{rem}
  
  \begin{rem}
    With a few more cohomological computations, we can classify \emph{all} the subgroups of $S_2 \wr S_n$ that surject onto $S_n$. They are the whole $S_2 \wr S_n$, the groups $G_1$, $G_2$, and $G_3$ (without parity restrictions on $n$), and the following additional groups:
    \begin{itemize}
      \item $\{0\} \cross S_n \isom S_n$
      \item $\left\{\bigl((\sgn \sigma)\1, \sigma\bigr) : \sigma \in S_n\right\}$, a twisted copy of $S_n$
      \item and, for $n = 4$ only, the group $\GL_2 \FF_3$, which acts on the $8$-element set $\FF_3^2 \setminus \{0\}$, preserving the partition into opposite pairs of vectors. In terms of its map to $S_4$, this is the double cover denoted $2\cdot S_4^+$ in \cite{WikiCover,GPCovers}. For instance, this is the generic Galois group of the even octic minimal polynomial of the $y$-coordinate of a $3$-torsion point on an elliptic curve over $\QQ$. It is a proper subgroup of the maximal subgroup $G_2$.
    \end{itemize}
    
  \end{rem}
  
  To prove Theorems \ref{thm:main} and \ref{thm:main_nonmonic}, we must bound the number of polynomials $f$, equivalently $g$, for which $G_f$ is (conjugate to) a subgroup of $G_1$, $G_2$, or $G_3$ for the values of $n$ listed in Theorem \ref{thm:G_i}.
  
  \begin{defn} \label{def:E_n}
    For $G \subseteq S_2 \wr S_n$ and $H \geq 2$, let $\E_n(G; H)$ be the number of separable reciprocal polynomials $f$ of degree $2n$ with coefficients in $[-H, H]$ such that $G_g = S_n$ and $G_f \subseteq G$. Let $\E_n^{\monic}(G; H)$ count the subset of these that are monic.
  \end{defn}
  
  The following lemma reduces computing $\E_n(G_1; H)$ and $\E_n(G_2; H)$ to number-theoretic conditions on $g$ that must hold in order to fulfill these conditions. (For $G_3$, we will use a different technique: see Section \ref{sec:G3}.)
  \begin{lemma}\label{lem:G123_conds} Assume that $G_g$ is the whole of $S_n$. Then:
    \begin{enumerate}[$($a$)$]
      \item\label{it:G1} $G_f \subseteq G_1$ if and only if $(-1)^n g(2) g(-2)$ is a square.
      \item\label{it:G2} $G_f \subseteq G_2$ if and only if $(-1)^n g(2) g(-2) \disc g$ is a square.
    \end{enumerate}
  \end{lemma}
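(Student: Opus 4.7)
The plan is to identify $G_1$ and $G_2$ as kernels of explicit characters of $S_2 \wr S_n$, translate each containment $G_f \subseteq G_i$ into the statement that a certain element of $\widetilde{K}_f$ lies in $\QQ$, and then simplify using Lemma \ref{lem:disc_f}.

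First I would observe that $G_1$ and $G_2$ are the kernels of the two nontrivial characters $\chi_1, \chi_2 \colon S_2 \wr S_n \to \{\pm 1\}$ given by
\[
  \chi_1(\mathbf{v}, \sigma) = (-1)^{v_1 + \cdots + v_n}, \qquad \chi_2(\mathbf{v}, \sigma) = (-1)^{v_1 + \cdots + v_n} \sgn(\sigma);
\]
both are straightforwardly seen to be homomorphisms. The key structural claim is that $\chi_1$ coincides with the restriction of the $S_{2n}$-sign character to $S_2 \wr S_n$ under its embedding via the action on the reciprocal pairs of roots of $f$. I would verify this on generators: an element of the form $(\mathbf{v}, \mathrm{id})$ with $\mathbf{v}$ a standard basis vector acts as a single transposition of the corresponding reciprocal pair (sign $-1$), while $(\mathbf{0}, (j\,k))$ swaps two pairs and so acts as a double transposition (sign $+1$).

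For part \ref{it:G1}, since $G_1 = \ker \chi_1 = (S_2 \wr S_n) \cap A_{2n}$, the containment $G_f \subseteq G_1$ is equivalent to $G_f \subseteq A_{2n}$, i.e.\ to $\disc f$ being a rational square. Invoking Lemma \ref{lem:disc_f} and the fact that $(\disc g)^2$ is a nonzero square (since $g$ is separable), this is equivalent to $(-1)^n g(2) g(-2)$ being a rational square.

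For part \ref{it:G2}, note that $\chi_2 = \chi_1 \cdot (\sgn \circ \pi)$, where $\pi \colon S_2 \wr S_n \twoheadrightarrow S_n$ is the natural projection. For $\tau \in G_f$ with image $\bar\tau \in G_g = S_n$, the factor $\chi_1(\tau)$ describes the action of $\tau$ on $\sqrt{\disc f} \in \widetilde{K}_f$, while $\sgn(\bar\tau)$ describes its action on $\sqrt{\disc g} \in \widetilde{K}_g \subseteq \widetilde{K}_f$. Hence $G_f \subseteq G_2$ iff $G_f$ fixes $\sqrt{\disc f \cdot \disc g}$, iff $\disc f \cdot \disc g$ is a rational square. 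Applying Lemma \ref{lem:disc_f} once more, $\disc f \cdot \disc g = (-1)^n g(2) g(-2) (\disc g)^3$, which is a square precisely when $(-1)^n g(2) g(-2) \disc g$ is. The only real obstacle in the argument is the sign-character identification in the second paragraph; it is elementary but must be checked with care. The rest is routine bookkeeping with the formula of Lemma \ref{lem:disc_f}.
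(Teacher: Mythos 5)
Your proposal is correct and follows essentially the same route as the paper: part \ref{it:G1} is identical (identifying $G_1$ with the preimage of $A_{2n}$ and applying Lemma \ref{lem:disc_f}), and for part \ref{it:G2} your product-of-sign-characters argument acting on $\sqrt{\disc f \cdot \disc g}$ is just a repackaging of the paper's observation that $G_2$ is the preimage of $A_{3n}$ under the action on the disjoint union of the roots of $f$ and $g$. The explicit generator check that $\chi_1$ restricts from the sign of $S_{2n}$ is a worthwhile detail the paper leaves implicit.
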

  \begin{proof}
    For \ref{it:G1}, note that $G_1$ is the preimage of $A_{2n}$ under the usual inclusion $S_2 \wr S_n \hookrightarrow S_{2n}$. Hence $G_f \subseteq G_1$ if and only if $\disc f = (-1)^n g(2) g(-2) (\disc g)^2$ is a square, which happens exactly when $(-1)^n g(2) g(-2)$ is a square.
    
    For \ref{it:G2}, consider the embedding of $S_2 \wr S_n$ into $S_{3n}$ given by its action on the disjoint union of the roots of $f$ and of $g$. Note that $G_2$ is the preimage of $A_{3n}$ under this embedding. Hence $G_f \subseteq G_2$ if and only if $\disc f \cdot \disc g$ is a square, which happens exactly when $(-1)^n g(2) g(-2) \disc g$ is a square.
  \end{proof}
  
  \section{Counting \texorpdfstring{$G_1$}{G1}-polynomials}\label{sec:G1}
  We first deal with the case $G_1$, which yields the main term of Theorems \ref{thm:main} and \ref{thm:main_nonmonic}.
  \begin{theorem} \label{thm:G1}
    For $n \geq 1$,
    \begin{align}
      \E_n(G_1; H) &\asymp H^{n} \log H  \label{eq:G1_nonmonic}
    \intertext{and for $n \geq 2$,}
      \E_n^\monic(G_1; H) &\asymp H^{n - 1} \log H. \label{eq:G1_monic}
    \end{align}
  \end{theorem}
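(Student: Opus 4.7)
The plan is to apply Lemma~\ref{lem:G123_conds}: under the running assumption $G_g = S_n$ (which fails on at most $O(H^n)$ polynomials by Bhargava's theorem, hence negligibly compared to the target $H^n \log H$), the condition $G_f \subseteq G_1$ is equivalent to $(-1)^n g(2) g(-2)$ being a perfect square. I would then count such $g$ directly by parametrizing through the pair of linear functionals $U := g(2)$ and $V := g(-2)$. Note also that under $G_g = S_n$, the polynomial $g$ is irreducible of degree $\geq 2$, so $g(\pm 2) \neq 0$, all $\beta_i$ are distinct, and all $\beta_i \neq \pm 2$; therefore the roots $\alpha_i, 1/\alpha_i$ of $f$ are automatically distinct, so separability is free.

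First I would observe the splittings $U + V = 2\sum_{i \text{ even}} 2^i b_i$ and $U - V = 2\sum_{i \text{ odd}} 2^i b_i$, so the map $(b_0, \ldots, b_n) \mapsto (U, V)$ surjects onto a sublattice of $\ZZ^2$ of finite index, and each nonempty fiber is a translate of a rank-$(n-1)$ sublattice in which $b_0$ and $b_1$ are pinned as linear functions of $b_2, \ldots, b_n$. Counting lattice points of such a fiber inside $[-H, H]^{n+1}$ yields at most $O(H^{n-1})$ polynomials per pair $(U, V)$, and at least $\gg H^{n-1}$ whenever $(U, V)$ lies in a box $|U|, |V| \leq c_n H$ chosen small enough that the bounds $|b_0|, |b_1| \leq H$ are automatic once the remaining $b_i$ range in a correspondingly shrunken box.

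Next I would count integer pairs $(U, V)$ with $|U|, |V| = O(H)$ such that $(-1)^n UV$ is a nonzero perfect square. Using the standard parametrization $|U| = m a^2$ and $|V| = m b^2$ with $m \in \ZZ_{\geq 1}$ squarefree, $a, b \in \ZZ_{\geq 1}$, and a sign convention dictated by the parity of $n$, this count is
\[
\sum_{\substack{m \leq O(H) \\ m \text{ squarefree}}} \#\bigl\{(a, b) \in \ZZ_{\geq 1}^2 : m a^2, m b^2 \leq O(H)\bigr\} \asymp \sum_{m \leq H} \frac{H}{m} \asymp H \log H.
\]
Multiplying the fiber count by the pair count gives $\E_n(G_1; H) \asymp H^{n-1} \cdot H \log H = H^n \log H$, once the $O(H^n)$ error from non-$S_n$ Galois groups is absorbed (which is smaller by a factor of $\log H$). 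The monic case differs only in that fixing $b_n = 1$ removes one free coordinate, so each fiber contributes $\asymp H^{n-2}$ polynomials and the final count becomes $\asymp H^{n-1} \log H$.

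The argument is essentially bookkeeping on top of one classical square-product estimate, and I expect no serious obstacles. The only mild subtlety is calibrating the lower bound: the box for $(U, V)$ must be shrunk by a factor depending on $n$ so that every lattice point of the corresponding shrunken fiber box automatically satisfies $|b_0|, |b_1| \leq H$, ensuring that neither the $(U, V)$-count nor the fiber count loses more than a constant factor when we impose these restrictions simultaneously.
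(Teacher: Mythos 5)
Your proposal is correct and follows essentially the same route as the paper: reduce via Lemma~\ref{lem:G123_conds}(a) to the condition that $(-1)^n g(2)g(-2)$ is a square, count the admissible pairs $\bigl(g(2),g(-2)\bigr)$ of size $O(H)$ by the standard square-product parametrization to get $\asymp H\log H$, multiply by the $\asymp H^{n-1}$ fiber count (with $n-1$ replaced by $n-2$ in the monic case), and absorb the $O(H^n)$ non-$S_n$ exceptions from Bhargava's theorem. The only detail to add is that for the lower bound the pairs $(U,V)$ must lie in the image lattice $U\equiv V \pmod 4$ of the map $g\mapsto\bigl(g(2),g(-2)\bigr)$ -- the paper arranges this by taking $4\mid k$ in its parametrization, and in yours it is handled just as easily (e.g.\ by taking $a,b$ both even), so the order $\gg H\log H$ of admissible pairs is unaffected.
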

  
  By Lemma \ref{lem:G123_conds}\ref{it:G1}, it suffices to count the number of $g$ such that $(-1)^n g(2) g(-2)$ is a square $z^2$.
  \begin{lemma}
    The number of solutions to the equation $x y = z^2$, $1 \leq x, y, z \leq H$ ($H \geq 2$) is $\Theta(H \log H)$.
  \end{lemma}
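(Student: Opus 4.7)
The plan is to parametrize solutions of $xy = z^2$ via their gcd structure. Set $d = \gcd(x, y)$ and write $x = da$, $y = db$ with $\gcd(a, b) = 1$. Then $xy = d^2 ab$ being a square forces $ab$ to be a square, and by coprimality each of $a$ and $b$ must itself be a square: $a = u^2$, $b = v^2$ with $\gcd(u, v) = 1$. Conversely, every triple of positive integers $(d, u, v)$ with $\gcd(u, v) = 1$ yields a solution $(x, y, z) = (du^2, dv^2, duv)$, and this correspondence is a bijection. A crucial observation is that $z = \sqrt{xy} \leq \max(x, y)$, so the constraint $z \leq H$ is automatically implied by $x, y \leq H$ and need not be imposed separately.

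With this parametrization in hand, the count becomes
\[
  N(H) = \sum_{d \geq 1} \#\bigl\{(u, v) \in \ZZ_{\geq 1}^2 : u, v \leq \sqrt{H/d},\ \gcd(u, v) = 1\bigr\}.
\]
For the upper bound, I would drop coprimality and bound each inner cardinality by $\lfloor \sqrt{H/d} \rfloor^2 \leq H/d$, giving $N(H) \leq \sum_{d=1}^{H} H/d = O(H \log H)$. For the lower bound, I would invoke the standard density result that the number of coprime pairs $(u, v)$ with $1 \leq u, v \leq N$ equals $(6/\pi^2) N^2 + O(N \log N)$, which is $\gg N^2$ once $N$ is at least some absolute constant. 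Restricting the outer sum to $d \leq H/C$ for a sufficiently large $C$ then yields
\[
  N(H) \gg \sum_{d \leq H/C} \frac{H}{d} \gg H \log H.
\]

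No step here poses a real obstacle; the computation is essentially the familiar divisor estimate $\sum_{d \leq H} H/d \sim H \log H$. The only point deserving any care is establishing the bijective parametrization and the automatic satisfaction of $z \leq H$, without which one would be left with a three-variable constraint that could plausibly alter the order of growth.
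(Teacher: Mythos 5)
Your parametrization $(x,y,z)=(du^2,dv^2,duv)$ with $\gcd(u,v)=1$ is exactly the one the paper uses, and both arguments conclude with the divisor-sum estimate $\sum_{d\le H} H/d \asymp H\log H$, using the positive density of coprime pairs for the lower bound (the paper handles small boxes by noting $u=v=1$ always works, while you restrict to $d\le H/C$; this is an immaterial difference). The proposal is correct and takes essentially the same approach as the paper, with the added care of spelling out the bijection and the automatic bound $z\le H$.
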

  \begin{proof}
    A parametrization of the solutions is given by
    \[
    x = k u^2, \quad y = k v^2, \quad z = k u v
    \]
    where $k$, $u$, $v$ are positive integers and $\gcd(u,v) = 1$. For each $k$, $1 \leq k \leq H$, the pair $(u,v)$ is chosen from the box $1 \leq u, v \leq \sqrt{H/k}$, and the number of coprime pairs in this box is $\Theta(H/k)$ (the lower bound comes from citing the limiting proportion $6/\pi^2 > 0$ of coprime pairs when $H/k$ is large, and noting that there is always at least one solution $u = v = 1$). So the total number $N$ of solutions satisfies
    \begin{align*}
      N &\asymp \sum_{k = 1}^H \frac{H}{k} \\
      &\asymp H \log H,
    \end{align*}
    as desired.
  \end{proof}

  \begin{proof}[Proof of Theorem \ref{thm:G1}]
    For simplicity, we prove the non-monic case \eqref{eq:G1_nonmonic}.
    Since $g(2), g(-2) \ll H$ and each pair $(x, y) = \(\size{g(2)}, \size{g(-2)}\)$ appears $O(H^{n-1})$ times, we get that $G_f \subseteq G_1$ at most $O(H^n \log H)$ times. Conversely, if we take $|x|, |y| \leq c H$ for an appropriate constant $c$, and with $x \equiv y \mod 4$ (which can be arranged, for instance by taking $4 | k$), we find that there are $\Theta(H^{n-1})$ polynomials $g$ with $g(2) = x$ and $g(2) = y$, and thus $\Theta(H^{n-1} \log H)$ polynomials overall with Galois group $G_f \subseteq G_1$.
  \end{proof}

  \subsection{Remarks on the monic case}
  
  For the monic case, the argument is identical, replacing $n$ by $n - 1$. It is only necessary to have at least two free coefficients so that $g(2)$ and $g(-2)$ can be adjusted independently, requiring $n \geq 2$.

  \section{Counting \texorpdfstring{$G_2$}{G2}-polynomials}\label{sec:G2}
  
  For $G_2$, we prove the following bounds, which are stronger than those for $G_1$ by a factor of $\log H$:
  \begin{theorem} \label{thm:G2}
    For $n \geq 2$,
    \begin{align}
      \E_n(G_2; H) &\ll H^{n}  \label{eq:G2_nonmonic} \\
      \E_n^\monic(G_2; H) &\ll H^{n - 1}. \label{eq:G2_monic}
    \end{align}
  \end{theorem}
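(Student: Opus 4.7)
The plan is to use Lemma~\ref{lem:G123_conds}\ref{it:G2} to reduce the problem to bounding the number of $g$ of height at most a constant times $H$ for which the modified discriminant
\[
D(g) := (-1)^n g(2) g(-2) \disc g
\]
is a nonzero perfect square. This is a van der Waerden--type question with $D(g)$ playing the role that $\disc g$ plays for $A_n$, so I would follow the three-case strategy of Bhargava \cite{Bhargava_vdW}. Write $D(g) = s t^2$ with $s$ squarefree, and choose exponents $0 < \alpha < \beta < 1$ to split the count according to: Case A, $|s| \ll H^\alpha$; Case B, $|s|$ in a medium range with every prime factor of $s$ at most $H^\beta$; and Case C, $s$ has a prime factor $p > H^\beta$.

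Case A is handled by standard upper bounds on the count of integer-coefficient $g \in V(\ZZ)$ whose invariant $D(g)$ (of degree $2n$ in the $n+1$ coefficients of $g$) takes a prescribed value, summed over the $O(H^\alpha)$ possibilities for $s$. Case C is addressed via a modified double discriminant $\Delta$, in the spirit of Bhargava's $\disc_{a_n} \disc_x f$: a polynomial in the coefficients $b_0, \ldots, b_{n-1}$ of $g$ such that $p^2 \mid D(g)$ with $\Delta(g) \neq 0$ forces $p \ll H^{O(1)}$, contradicting $p > H^\beta$ for $\beta$ chosen appropriately. A natural candidate is a variant of $\disc_{b_n} D(g)$, with small adjustments to prevent identical vanishing arising from the specializations $x = \pm 2$; the degenerate locus $\{\Delta(g) = 0\}$ is of codimension $1$ and contributes at most $O(H^n)$.

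The heart of the argument, and the point of divergence from \cite{Bhargava_vdW}, is Case B. Here some prime $p$ in a dyadic range $[P, 2P]$ satisfies $p^2 \mid D(g)$, and one wants to apply Fourier equidistribution modulo $p^2$ on the lattice $V(\ZZ)$ of reciprocal polynomials. The complication is that the locus $\{p^2 \mid D(g)\}$ has several qualitatively different components, corresponding to how the factor of $p^2$ is distributed among $g(2)$, $g(-2)$, and $\disc g$: for instance $\{p^2 \mid \disc g\}$, $\{p \mid g(\pm 2),\, p \mid \disc g\}$, $\{p \mid g(2),\, p \mid g(-2)\}$, $\{p^2 \mid g(\pm 2)\}$, and combinations. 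The latter strata are cut out by the linear congruences $g(\pm 2) \equiv 0 \pmod{p}$ and are therefore supported on proper sublattices of $V(\ZZ)$ of index $p$ or $p^2$. The plan is to decompose $\mathbf{1}_{p^2 \mid D(g)}$ as a sum of indicator functions supported on these different sublattices, apply Poisson summation to each summand on its own dual lattice, and bound the resulting exponential sums uniformly in $p$ before averaging. The main obstacle is precisely this multi-stratum Fourier analysis: the dual lattices and truncation transforms differ from stratum to stratum, and one must obtain matching savings on each before recombining to close the argument.

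Finally, the monic bound \eqref{eq:G2_monic} follows by fixing $b_n = 1$ and rerunning the same argument with one fewer free coefficient; every sublattice count loses one power of $H$, and the bound becomes $H^{n-1}$ uniformly across the three cases, which is valid provided $n \geq 2$ so that at least one coefficient of $g$ remains free after normalizing the leading and constant terms.
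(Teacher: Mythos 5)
There is a genuine gap, and it starts at the very first step of your case decomposition. By Lemma \ref{lem:G123_conds}\ref{it:G2} you are counting precisely those $g$ for which $D(g) = (-1)^n g(2)g(-2)\disc g$ \emph{is} a perfect square, so when you write $D(g) = st^2$ with $s$ squarefree you get $s = 1$ for every polynomial being counted. Your Cases B and C are therefore empty as stated, and everything falls into Case A, where the proposed method is circular: there is no usable ``standard'' bound for the number of $g$ in a box with $D(g)$ equal to a prescribed integer which, summed over all $\sim H^{n+1}$ admissible square values, yields $O(H^n)$ --- that \emph{is} the problem to be solved. The quantity that must be stratified is not the square class of the polynomial invariant but the field discriminant $D = \Disc K_g$ and its radical $C = \prod_{p \mid D} p$, as in the paper (following Bhargava): when $D \leq H^{2+2\delta}$ one invokes Bhargava's Case II, i.e.\ counts of polynomials cutting out $S_n$-fields of small discriminant (a number-field-counting input, not a prescribed-value count); when $D \geq H^{2+2\delta}$ and $C \leq H^{1+\delta}$ one uses Lemma \ref{lem:index_tame} together with the key new observation that squareness of $(-1)^n D\, g(2)g(-2)$ forces $p \mid g(2)g(-2)$ at every prime with $v_p(D)$ odd; and when $C \geq H^{1+\delta}$ one uses the double discriminant.

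Two of your ingredients do point in the right direction, but they are attached to the wrong framework. The multi-stratum Fourier analysis you describe in Case B --- indicator functions supported on the sublattices $\{g(\pm 2) \equiv 0 \bmod p\}$, Poisson summation on each stratum, recombination --- is essentially what the paper does in its Case I (Lemmas \ref{lem:ftgen} and \ref{lem:ftgen_pointed} and the decomposition $\Psi_p = \Lambda_p + \Delta_p$ in Lemma \ref{lem:case1}), but there the congruence conditions are high-index splitting-type conditions modulo $p$ at primes dividing $C$, not conditions modulo $p^2$, and the savings per prime is $p^{-2\lceil v_p(D)/2 \rceil}$, which is what makes the sum over $D' \geq H^{1+\delta}$ converge. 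Likewise your Case C mechanism is off: the double discriminant does not show that a large prime divisor is impossible; rather, one shows via Lemma \ref{lem:p_reasons} that if $p^2 \mid h = g(2)g(-2)\disc g$ \emph{for mod $p$ reasons} (which the paper deduces from Lemma \ref{lem:index_tame} at primes dividing $\Disc K_g$), then $p$ divides the resultant $R(b_1,\ldots,b_n) = \Res_{b_0}\bigl(h, \tfrac{\partial}{\partial b_0} h\bigr)$; one then fixes $b_1,\ldots,b_n$ with $R \neq 0$, notes that the large factor $C > H$ of $R$ has few possibilities, and bounds the number of admissible $b_0$, with a further subcase split (and a re-use of the Case I lemma) to remove the $H^\epsilon$. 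Without the ``mod $p$ reasons'' hypothesis, and without a case division that guarantees either small $\Disc K_g$ or strong congruence savings, your three cases do not cover the count, and the bound $O(H^n)$ is not reached.
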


  By Lemma \ref{lem:G123_conds}\ref{it:G2}, it suffices to count the number of $g$ such that $(-1)^n g(2) g(-2) \disc g$ is a square. We use a sieve method adapted from Bhargava \cite{Bhargava_vdW}. We begin with some analytic preliminaries.
  
  \subsection{Twisted Poisson summation}
  Let $\Phi : \RR^n \to \CC$ be a Schwartz function. We normalize the Fourier transform by
  \[
  \widehat{\Phi}(y) = \int_{\RR^n} e^{-2\pi \sqrt{-1} x \bullet y} \Phi(x)\, dx.
  \]
  The usual Poisson summation formula
  \[
  \sum_{x \in \ZZ^n} \Phi(x) = \sum_{y \in \ZZ^n} \widehat\Phi(y)
  \]
  can be extended in various ways. If $L \subseteq \ZZ^n$ is a lattice (a subgroup of finite index), then
  \begin{equation} \label{eq:poisson}
  \sum_{x \in L} \Phi(x) = \frac{1}{[\ZZ^n : L]}\sum_{y \in L^*} \widehat\Phi(y),
  \end{equation}
  where $L^* \supseteq \ZZ^n$ is the dual lattice. More generally:
  \begin{prop}\label{prop:poisson}
    Let $L \subseteq \ZZ^n$ be a lattice, and let $\Psi : (\ZZ/M\ZZ)^n \to \CC$ be any function, where the modulus $M$ is coprime to $[\ZZ^n : L]$, and let $\widehat{\Psi} : (\ZZ/M\ZZ)^n \to \CC$ be its Fourier transform
    \[
    \widehat{\Psi}(y) = \frac{1}{M^n} \sum_{x \in (\ZZ/M\ZZ)^n} e^{-2\pi \sqrt{-1} x \bullet y/M}.
    \]
    For any Schwartz function $\Phi$,
    \[
    \sum_{x \in L} \Psi(x) \Phi(x) = \frac{1}{[\ZZ^n : L]}\sum_{y \in L^*} \widehat{\Psi}(y) \widehat{\Phi}\(\frac{y}{M}\).
    \]
  \end{prop}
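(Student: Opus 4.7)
The plan is to reduce the claim to the lattice Poisson summation formula \eqref{eq:poisson} applied to the Schwartz function $\Psi \Phi$, after lifting $\Psi$ to an $M$-periodic function on $\RR^n$. By discrete Fourier inversion on $(\ZZ/M\ZZ)^n$,
\[
\Psi(x) = \sum_{a \in (\ZZ/M\ZZ)^n} \widehat{\Psi}(a)\, e^{2\pi \sqrt{-1}\, x \bullet a / M}.
\]
Combining this with the shift property $\widehat{e^{2\pi \sqrt{-1}\, \cdot \bullet c}\,\Phi}(y) = \widehat{\Phi}(y - c)$ of the continuous Fourier transform, one finds that $\Psi\Phi$ is a Schwartz function on $\RR^n$ with
\[
\widehat{\Psi \Phi}(y) = \sum_{a \in (\ZZ/M\ZZ)^n} \widehat{\Psi}(a)\, \widehat{\Phi}\!\left(y - \frac{a}{M}\right).
\]

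Applying \eqref{eq:poisson} to $\Psi \Phi$ then yields
\[
\sum_{x \in L} \Psi(x) \Phi(x) = \frac{1}{[\ZZ^n : L]} \sum_{(y, a) \in L^* \times (\ZZ/M\ZZ)^n} \widehat{\Psi}(a)\, \widehat{\Phi}\!\left(y - \frac{a}{M}\right),
\]
and the remaining task is to reparametrize this double sum as a single sum over $z \in L^*$ via the substitution $z = My - a$, under which $z/M = y - a/M$ is exactly the argument of $\widehat{\Phi}$.

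The crux of the reparametrization is the coprimality hypothesis. I claim that $\gcd(M, [\ZZ^n : L]) = 1$ forces the natural reduction map $\ZZ^n / M\ZZ^n \to L^* / M L^*$ to be an isomorphism: both groups have order $M^n$, and injectivity follows because any $a \in \ZZ^n \cap ML^*$ can be written $a = Mw$ with $w \in L^*$, whereupon the class of $w$ in $L^*/\ZZ^n$ is annihilated by $M$ and hence trivial, since $|L^*/\ZZ^n| = [\ZZ^n : L]$ is prime to $M$; so $a \in M\ZZ^n$. Using this isomorphism, the map $(y, a) \mapsto My - a$ is a bijection from $L^* \times \{0, 1, \ldots, M-1\}^n$ onto $L^*$: for each $z \in L^*$ the isomorphism selects a unique $a \in \{0, \ldots, M-1\}^n$ with $a \equiv -z \pmod{ML^*}$, and then $y = (z + a)/M$ lies automatically in $L^*$. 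Extending $\widehat{\Psi}$ from $\ZZ^n/M\ZZ^n$ to $L^*/ML^*$ along the same isomorphism collapses the double sum to $\sum_{z \in L^*} \widehat{\Psi}(z)\, \widehat{\Phi}(z/M)$, matching the stated formula (invoking the symmetry $z \mapsto -z$ of $L^*$ if needed to reconcile sign conventions). The main obstacle is precisely this bookkeeping step — identifying $\widehat{\Psi}$ on the overlattice $L^*$ via the isomorphism and carefully tracking the bijection — while the Fourier inversion, interchange of sums, lattice Poisson summation, and Fourier shift property are all routine.
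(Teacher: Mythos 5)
Your argument is correct in substance, but it takes the route dual to the paper's. The paper's proof reduces by linearity to $\Psi$ supported at a single residue class $x_0 + M\ZZ^n$: the coprimality of $M$ with $[\ZZ^n:L]$ gives $L + M\ZZ^n = \ZZ^n$ and $L \cap M\ZZ^n = ML$, so the sum over $L$ restricted to that class is a sum over a coset of $ML$, and a single application of Poisson summation for the shifted sublattice $ML$ (whose dual is $\frac{1}{M}L^*$) yields the formula directly. You instead decompose $\Psi$ on the frequency side into additive characters via discrete Fourier inversion, compute $\widehat{\Psi\Phi}$ from the shift property, apply \eqref{eq:poisson} for $L$ itself, and then fold the double sum over $L^* \times (\ZZ/M\ZZ)^n$ into a single sum over $L^*$ using the isomorphism $\ZZ^n/M\ZZ^n \cong L^*/ML^*$. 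Your verification of that isomorphism and of the bijectivity of $(y,a)\mapsto My-a$ is exactly right, and it uses the coprimality hypothesis in the same essential place where the paper does; the paper's version avoids the reindexing bookkeeping, while yours makes explicit why $\widehat\Psi$ extends to a function on $L^*/ML^*$, which is the point the paper only remarks on.

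One step should be stated more carefully. With your choice $a \equiv -z \pmod{ML^*}$, the collapse produces $\sum_{z \in L^*} \widehat{\Psi}(-z)\,\widehat{\Phi}(z/M)$, and the substitution $z \mapsto -z$ does not remove the sign --- it transfers it to the argument of $\widehat\Phi$, giving $\sum_{z} \widehat{\Psi}(z)\,\widehat{\Phi}(-z/M)$, which is not the displayed formula for a general (non-even) $\Phi$. In fact the residual sign lies in the statement's conventions rather than in your computation: with both the continuous and the discrete transforms normalized with $e^{-2\pi\sqrt{-1}(\cdot)}$ (and with the evident missing factor $\Psi(x)$ restored in the displayed definition of $\widehat\Psi$), the identity one actually gets is the one you derived, with $\widehat\Psi(-y)$ on the right; equivalently, the display as printed holds verbatim if the discrete transform is taken with $e^{+2\pi\sqrt{-1}\,x\bullet y/M}$. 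One sees this already for $n=1$, $L=\ZZ$, $M=3$, $\Psi$ the indicator of $1 \bmod 3$, where the two sides of the printed formula count the classes $1$ and $-1$ mod $3$ respectively. This discrepancy is harmless for the application in Section \ref{sec:G2}, where only $\lvert\widehat\Psi\rvert$ and the support of the main term enter and the relevant functions are real, but you should note the convention explicitly rather than appeal vaguely to the symmetry of $L^*$.
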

  Observe that $\widehat{\Psi}$ is well defined on $L^*$ since $M$ is coprime to $[\ZZ^n : L]$. (In fact, one can do without this hypothesis, but then $\widehat{\Psi}$ becomes a function on $L^*/ML^*$ instead of being independent of $L$.) To prove this proposition, one may assume by linearity that $\Psi$ is supported on a single point, and the result reduces to Poisson summation.
  
  \subsection{The index of a polynomial over a field}
  
  Following Bhargava \cite[Proposition 21]{Bhargava_vdW}, we make the following definition. If $P \in \Bbbk[x,y]$ is a nonzero homogeneous polynomial over a field, we factor $P = \prod_i P_i^{e_i}$ into powers of distinct irreducibles and define the \emph{index} of $P$ to be
  \[
    \ind(P) = \sum_i (e_i - 1) \deg f_i.
  \]
  The index is significant for bounding the power of a prime $p$ dividing the discriminant of the polynomial and the extension field it defines. The following lemma is used implicitly in \cite{Bhargava_vdW}; for completeness, we offer a statement and proof.
  \begin{lemma}\label{lem:index_tame}
    Let $g \in R[x,y]$ be a separable homogeneous binary form of degree $n$ over a Dedekind domain $R$. Let $F = \Frac R$, and let $E = F[\beta]/g(\beta, 1)$ be the \'etale algebra (product of separable finite field extensions) defined by $g$. If $\pp$ is a prime ideal of $R$ such that $E$ is tamely ramified at $\pp$ (e.g.\ $\ch(R/\pp) > n$) and $g$ is not identically zero modulo $\pp$, then
    \begin{equation}\label{eq:index_tame}
      v_\pp(\Disc E) \leq \ind(g \bmod p) \leq v_\pp(\disc g).
    \end{equation}
  \end{lemma}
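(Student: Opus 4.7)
The plan is to localize at $\pp$ and pass to the completion $\hat R$, reducing to the case where $R$ is a complete DVR with uniformizer $\pi$ and residue field $k$. By a $\GL_2(R)$ change of variables in $(x,y)$, possibly after an unramified extension of $R$ so that the residue field is large enough, I may assume the leading coefficient of $g(x,1)$ is a unit; such a change preserves $\ind(\bar g)$ (it permutes irreducible factors of $\bar g$), $v_\pp(\disc g)$ (which transforms by a power of the unit determinant), and $v_\pp(\Disc E)$ (since $E$ is intrinsic to the zero-dimensional scheme cut out by $g$). After rescaling, $g(x,1)$ is monic of degree $n$. Factor $\bar g = \prod_i P_i^{e_i}$ in $k[x]$ and apply Hensel's lemma to lift to $g = \prod_i g_i$ over $R$ with $\bar g_i = P_i^{e_i}$; the cross-resultants $\Res(g_i, g_j)$ are units, so $v_\pp(\disc g)$, $\ind(\bar g)$, and $v_\pp(\Disc E) = \sum_i v_\pp(\Disc(F[\beta]/g_i))$ all decompose additively. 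I therefore reduce to the case $\bar g = P^e$ for a single irreducible $P$ of degree $d$, whereupon $\ind(\bar g) = (e-1)d$.

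For the right inequality, I use the general fact that for monic $A, B \in R[x]$ with $\bar A \neq 0$, the resultant satisfies $v_\pp(\Res(A,B)) \geq \deg \gcd(\bar A, \bar B)$, with the convention $\deg \gcd(\bar A, 0) = \deg \bar A$. This follows by identifying $\Res(A,B)$ with the determinant of multiplication by $B$ on the free $R$-module $R[x]/(A)$: the reduction modulo $\pi$ is multiplication by $\bar B$, whose kernel has dimension $\deg \gcd(\bar A, \bar B)$, forcing the stated $\pi$-valuation bound via Smith normal form. Since tameness implies $P$ is separable (its residue extension $k[x]/(P)$ embeds into the separable residue field $\OO_E/\mathfrak{P}$), we have $\gcd(P, P') = 1$. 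If $p \nmid e$ then $\bar g' = e P^{e-1} P'$ and $\gcd(\bar g, \bar g') = P^{e-1}$ has degree $(e-1)d$; if $p \mid e$ then $\bar g' = 0$ and the bound jumps to $\deg \bar g = ed \geq (e-1)d$. Either way, $v_\pp(\disc g) \geq (e-1)d = \ind(\bar g)$.

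For the left inequality, further factor $g = \prod_\nu g_\nu$ into irreducibles over $R$; each $\bar g_\nu = P^{k_\nu}$ with $\sum_\nu k_\nu = e$, and $E_\nu = F[\beta]/g_\nu$ is a field of degree $k_\nu d$ with ramification index $e_\nu$ and residue degree $f_\nu$, so $e_\nu f_\nu = k_\nu d$. Since $k[x]/(P)$ embeds into $\OO_{E_\nu}/\mathfrak{P}$ via the image of $R[\beta]$, I get $d \mid f_\nu$ and hence $f_\nu \geq d$. The tame discriminant formula then gives $v_\pp(\Disc E_\nu) = f_\nu(e_\nu - 1) = k_\nu d - f_\nu \leq (k_\nu - 1)d$, and summing yields $v_\pp(\Disc E) \leq (e - s)d \leq (e-1)d = \ind(\bar g)$, where $s \geq 1$ is the number of irreducible factors.

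The most delicate step, I expect, will be the initial normalization --- verifying that one can always reduce via localization, completion, unramified base change, and $\GL_2(R)$ change of variables to the monic case without altering any of the three valuations. Once $g$ is monic with $\bar g = P^e$, the key estimates fall straight out of Hensel's lemma, the Smith-normal-form bound on resultants, and the classical discriminant formula for tame extensions.
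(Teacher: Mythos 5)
Your argument is correct, and it shares the paper's skeleton---pass to the completion at $\pp$, normalize so that $g(x,1)$ is monic, split by Hensel's lemma according to the coprime factors of $g \bmod \pp$, and reduce to the case $\bar g = P^e$ with $P$ irreducible of degree $d$---but it diverges in how the two inequalities are then obtained. For $\ind(\bar g) \leq v_\pp(\disc g)$ the paper stays inside $\OO_E$: it writes $v_\pp(\disc g) = v_\pp(\Disc S)$ for $S = R[\beta]$ and bounds this below via the explicit overring $S' = \{(x_1,\ldots,x_r) \in \OO_E : x_1 \equiv \cdots \equiv x_r \bmod \pp\}$ together with $v_\pp(\Disc S') = v_\pp(\Disc E) + 2v_\pp([\OO_E:S'])$; you instead bound $v_\pp(\Res(g,g'))$ below by $\deg\gcd(\bar g,\bar g')$ using the Smith normal form of multiplication by $g'$ on $R[x]/(g)$. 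Your route is more elementary (it needs no description of $\OO_E$) and it explicitly covers the case $p \mid e$, which tameness alone does not exclude (take $g = \prod_{i\le p}(x-ip)$), via the convention $\gcd(\bar g,0)=\bar g$. For $v_\pp(\Disc E) \leq \ind(\bar g)$ both proofs invoke the tame discriminant formula, but the paper asserts that every field factor of $E$ has residue degree exactly $d$, which is more than is true: for $g = x^2 - p^2u$ with $p$ odd and $u$ a non-square unit one has $\bar g = x^2$, $d=1$, yet $E = \QQ_p(\sqrt{u})$ has residue degree $2$. Your weaker claim $d \mid f_\nu$, hence $f_\nu \geq d$, obtained by reducing $\beta$ into the residue field, is exactly what the estimate needs, so your version of this step is the more robust one. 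The points you leave implicit in the normalization---that an unramified base change and a $\GL_2(R)$ substitution preserve all three quantities, and that every irreducible factor of $\bar g$ is separable so that the index is unchanged by residue extension---do follow from tameness by the same lying-over argument you give for $P$, so they are fillable details rather than gaps.
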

  \begin{proof}
    We have deliberately stated the lemma in fuller generality than needed to allow for making some reductions. First, we may replace $R$ by its completion at $\pp$. Let $\Bbbk$ be the residue field of $R$. Now, if $g \equiv g_1 g_2$ mod $\pp$ with $g_i \in \Bbbk[x,y]$ homogeneous and coprime, then by Hensel's lemma, the factorization lifts to $R$ and induces a splitting $E = E_1 \cross E_2$. The inequality \eqref{eq:index_tame} can then be deduced by summing the corresponding inequalities for $g_1$ and $g_2$. Thus we may assume that
    \[
      g \equiv c \cdot g_1^e \mod \pp,
    \]
    where $c$ is a constant and $g_1 \in \Bbbk[x,y]$ is irreducible of some degree $f$, with $e f = n$. We may change coordinates so that $g_1 \neq y$ is monic in $x$. Now
    \[
      E = \prod_{j = 1}^r E_j
    \]
    is a product of fields with the same inertia index $f$ and possibly different ramification indices $e_j$. We compute, using the usual formula for the discriminant of a tamely ramified extension:
    \begin{itemize}
      \item First, \begin{align*}
        v_\pp(\Disc E) &= \sum_{j = 1}^r v_\pp(\Disc E_i) \\
        &= \sum_{j = 1}^r (e_j - 1) f \\
        &= n - rf.
      \end{align*}
      \item $\ind(g \bmod p) = (e - 1) f = n - f$.
      \item Finally, we need to understand the ring $S = R[\beta]/g(\beta, 1) \subset E$. Note that $S$ is contained in
      \[
        S' = \{(x_1, \ldots, x_r) \in \OO_{E} : x_1 \equiv \cdots \equiv x_r \mod \pp\},
      \]
      a subring of $\OO_E$ of index $(r - 1)f$. Thus
      \begin{align*}
        v_\pp(\disc g) &= v_\pp(\Disc S) \\
        &\geq v_\pp(\Disc S') \\
        &= v_\pp(\Disc E) + 2 v_\pp([\OO_E : S']) \\
        &= n - rf + 2(r - 1) f \\
        &= n - f + (r - 1) f.
      \end{align*}
    \end{itemize}
    The desired inequality follows immediately. Equality for both parts holds exactly when $r = 1$, or in the original setup, when $\pp \nmid [\OO_E : S]$.
  \end{proof}
  
  Following Bhargava \cite[\textsection 5]{Bhargava_vdW}, we define
  \[
    D = \Disc K_g \textand C = \prod_{p\mid D} D
  \]
  and divide the counting into three cases based on the sizes of $C$ and $D$ relative to $H$. Unlike in \cite{Bhargava_vdW}, we do not have that $D$ is squarefull; but by Lemma \ref{lem:G123_conds}\ref{it:G2} we have that $(-1)^n D g(2) g(-2)$ is a square, which limits the cases.
  
  Let $\delta$ be a small constant (such as $1/4n$). For $R$ a ring, denote by $V^{\hom}(R)$ the $(n+1)$-dimensional space of binary $n$-ic forms $P(x,y)$ over $R$, and denote by $V(R)$ the space of polynomials $g(u)$ of degree at most $n$ over $R$. The two $R$-modules are isomorphic, but we will need to identify them in multiple ways.
  
  \subsection{Case I: \texorpdfstring{$C \leq H^{1+\delta}$}{C < H}, \texorpdfstring{$D \geq H^{2 + 2\delta}$}{D > H²}} \label{sec:case1}
  
  In this case, we need to estimate the number of $g$ for which $D = \Disc(K_g)$, $g(2)$, and $g(-2)$ have certain factors. We begin with a short argument that yields the result up to $\epsilon$.
  \begin{lemma}
    Let $p$ be a prime, and let $k$ be an integer. The number of binary forms $g \in V^{\hom}(\FF_p)$ such that
    \begin{itemize}
      \item $g(1,0) = 0$, that is, $y \mid g$, and
      \item $\ind(g) \geq k$
    \end{itemize}
    is $O(p^{n-k})$.
  \end{lemma}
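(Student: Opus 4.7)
The plan is to parametrize each form $g$ by its radical $R = \prod_i P_i$, the squarefree product of its distinct monic-in-$x$ irreducible factors, and then show that (i) the hypothesis $\ind(g) \geq k$ forces $\deg R$ to be small, and (ii) for each admissible radical, the number of ways to extend $R$ to a full form of degree $n$ is bounded by a constant depending only on $n$.

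Concretely, I would write $g = c \prod_i P_i^{e_i}$ with $c \in \FF_p^\times$ and the $P_i$ distinct monic-in-$x$ irreducible binary forms of degrees $f_i$, treating $P_0 = y$ as a special allowed factor. Setting $r = \deg R = \sum_i f_i$, the identities $n = \sum_i e_i f_i$ and $\ind(g) = \sum_i (e_i - 1) f_i = n - r$ show that $\ind(g) \geq k$ is equivalent to $r \leq n - k$. The hypothesis $y \mid g$ forces $y$ to appear among the $P_i$, so I may write $R = y \cdot R_1$ with $R_1$ squarefree of degree $r - 1$ and coprime to $y$.

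Next I would bound the number of admissible radicals. Since $y \nmid R_1$, the binary form $R_1$ is determined by its dehomogenization $R_1(x,1)$, a polynomial of degree $r - 1$ in $\FF_p[x]$, giving at most $p^{r-1}$ choices of monic-in-$x$ radical $R$. For each such $R$, the multiplicity vector $(e_0, e_1, \ldots)$ with $e_i \geq 1$ and $\sum_i e_i f_i = n$ admits only $O_n(1)$ solutions, and for each such choice the form $g = c \prod_i P_i^{e_i}$ depends only on the leading scalar $c \in \FF_p^\times$, contributing a factor of $p - 1$.

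Summing over $1 \leq r \leq n - k$, the total count is at most
\[
O_n(1) \cdot \sum_{r=1}^{n-k} (p-1) \cdot p^{r-1} \;=\; O_n\bigl(p^{n-k}\bigr),
\]
since the geometric sum is dominated by its largest term. There is no real obstacle here; the argument is essentially a bookkeeping exercise, and the only mild subtlety is that we may freely replace ``squarefree'' by ``arbitrary'' when upper-bounding the number of candidate $R_1$, and ``distinct irreducible'' by ``arbitrary monic'' when bounding the number of factorization patterns, without affecting the exponent $n - k$.
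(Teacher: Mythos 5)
Your argument is correct, but it takes a genuinely different route from the paper. The paper deduces the lemma from the cited bound \cite[Corollary 26]{Bhargava_vdW} that the number of binary $n$-ic forms over $\FF_p$ of index at least $k$ is $O(p^{n+1-k})$, and then imposes the extra condition $g(1,0)=0$ by an averaging trick: for $p>n$ the $p$ translates $g(x,y+ax)$, $a\in\FF_p$, are pairwise distinct, all have the same index, and at most $n$ of them vanish at $(1,0)$, so the added condition saves a factor of order $p$; the range $p\le n$ is disposed of trivially. You instead count directly: writing $g=c\prod_i P_i^{e_i}$ and noting $\ind(g)=n-\deg R$ for the radical $R$, the hypotheses force $y\mid R$ and $\deg R\le n-k$, and then $g$ is determined by the radical (at most $p^{r-1}$ choices once the factor $y$ is split off), one of $O_n(1)$ multiplicity patterns, and the scalar $c$, giving $\ll\sum_{r\le n-k}(p-1)p^{r-1}=O(p^{n-k})$. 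Your version is self-contained --- it in effect reproves the special case of Bhargava's count that is needed here, by the same kind of radical/splitting-type bookkeeping --- and it requires no case split at $p\le n$; the paper's version is shorter granted the cited result and reuses a translate-averaging device. The only loose end in your write-up is the degenerate form $g\equiv 0$ (which has index $\infty$ under the paper's convention and satisfies $y\mid g$ vacuously), excluded by your normalization $c\in\FF_p^\times$; it contributes a single additional form and is harmless whenever $k\le n$.
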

  \begin{proof}
    We can immediately dispose of the case $p \leq n$, for here both the number of $g$ and the desired bound are $O(1)$. In \cite[Corollary 26]{Bhargava_vdW}, it is shown that the number of degree-$n$ binary forms $g$ such that $\ind(g) \geq k$ is $O(p^{n+1-k})$. To impose the condition $g(1,0) = 0$, we can consider each $g$ as lying in the family of $p$ translates
    \[
    g(x,y + a x), \quad a \in \FF_p.
    \]
    The translates all have the same index, and if $p > n$, the translates are all distinct. Moreover, since $g$ has at most $n$ roots over $\FF_p$, at most $n = O(1)$ of the translates satisfy the added condition $g(1,0) = 0$. Hence the total number of such $g$ is $O(p^{n-k})$, as desired.
  \end{proof}
  
  Let $p > n$ be a prime dividing $C$, and suppose $p^k \parallel D$. By Lemma \ref{lem:index_tame} (left part), we have $\ind(g) \geq k$, and this occurs for a proportion $p^{-k}$ of $g$. If $k$ is odd, then we additionally have $p \mid g(2)$ or $p \mid g(-2)$, and altogether there is a proportion $p^{-k-1}$ of $g$ satisfying these conditions. Multiplying over $p$, the proportion of $G_2$-polynomials with $\Disc K_g = D$ is bounded by
  \[
  \prod_{p\mid D} O{\(p^{-2\ceil{k/2}}\)} = \frac{O\(c^{\omega(D_1)}\)}{D_1^2},
  \]
  where $D_1^2 = \prod_{p\mid D} p^{2\ceil{k/2}}$ is the least square divisible by $D$. Observe that $D_1 \gg H$ and that each $D_1$ occurs for at most $2^{\omega(D_1)}$ values of $D$. Moreover, these $g$ are cut out by congruence conditions mod $C$. If $C \leq H$, then we can estimate the number of lattice points very precisely because our modulus is lower than the size of the box. We get that the number of polynomials $g$ is
  \[
  \ll H^{n + 1} \sum_{D_1 \geq H} \frac{c^{\omega(D_1)}}{D_1^2} \ll_\epsilon H^{n + \epsilon}.
  \]
  
  Using Fourier analysis we can remove the $\epsilon$ and also extend the validity of this case from $C \leq H$ to $C \leq H^{1 + \delta}$.
  
  Recall some definitions from \cite[\textsection 4.1]{Bhargava_vdW}. If a binary $n$-ic form $f$ (over $\ZZ$, or over $\FF_p$) factors modulo $p$ as $\prod_{i=1}^r P_i^{e_i}$, with $P_i$ irreducible and $\deg(P_i)=f_i$, then the \emph{splitting type} $(f,p)$ of $f$ is defined as $(f_1^{e_1}\cdots f_r^{e_r})$, and the \emph{index} $\ind(f)$ of $f$ modulo~$p$ (or the \emph{index} of the splitting type $(f,p)$ of $f$) is defined to be $\sum_{i=1}^r (e_i-1)f_i$. If $p \mid f$, we put $\ind(f) = \infty$. More abstractly, a \emph{splitting type} is an expression $\sigma$ of the form $(f_1^{e_1}\cdots f_r^{e_r})$, where the $f_i$ and $e_i$ are positive integers. The \emph{degree} $\deg(\sigma)$ is $\sum_{i=1}^r e_i f_i$, and the \emph{index} $\ind(\sigma)$ is $\sum_{i=1}^r (e_i-1)f_i$. Finally, $\#{\Aut(\sigma)}$ is defined to be $\prod_i f_i$ times the number of permutations of the factors
  $f_i^{e_i}$ that preserve $\sigma$.
  
  In this work, we will need to deal with splitting types with a distinguished factor of degree $1$. If $\sigma$ has $f_1 = 1$, we let $\#{\Aut'(\sigma)}$ be $\prod_i f_i$ times the number of permutations of the factors $f_i^{e_i}$, $i \geq 2$, that preserve $\sigma$.
  
  We first recall the following lemma of Bhargava:
  \begin{lemma}[\cite{Bhargava_vdW}, Proposition 25]\label{lem:ftgen}
    Let $\sigma=(f_1^{e_1}\cdots f_r^{e_r})$ 
    be a splitting type with $\deg(\sigma)=d$ and $\ind(\sigma)=k$.
    Let $w_{p,\sigma}:V^{\hom}(\FF_p)\to\CC$ be defined by
    \begin{align*}
      w_{p,\sigma}(f) \coloneqq &\text{ the number of $r$-tuples $(P_1,\ldots,P_r)$, up to the action of the group of}  \\[-.04in] & \text{  permutations of $\{1,\ldots,r\}$ preserving $\sigma$, such that the $P_i$ are distinct  } \\[-.04in] &\text{ irreducible binary forms where, for each $i$, we have $P_i(x,y)$ is $y$ or is  } \\[-.04in] &\text{ monic as a polynomial in $x$, $\deg P_i=f_i$, and $P_1^{e_1}\cdots P_r^{e_r}\mid f$}.
    \end{align*}
    Then 
    \begin{equation*}
      \widehat{w_{p,\sigma}}(g)=
      \begin{cases}
        {\displaystyle\frac{p^{-k}}{\#{\Aut}(\sigma)} + O(p^{-(k+1)})}	& \text{if $g=0$;}\\[.1in]
        {O(p^{-(k+1)})}		& \text{if $g \neq 0$}.\\
      \end{cases}
    \end{equation*}
  \end{lemma}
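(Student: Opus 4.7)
The plan is to compute $\widehat{w_{p,\sigma}}(g)$ directly by unfolding the definition of $w_{p,\sigma}$ and applying orthogonality of characters. First I would rewrite
\[
w_{p,\sigma}(f) = \frac{1}{N_\sigma} \sum_{(P_1,\ldots,P_r)} \1\bigl[P_1^{e_1} \cdots P_r^{e_r} \mid f\bigr],
\]
where the sum is over ordered $r$-tuples of distinct monic-or-$y$ irreducible binary forms with $\deg P_i = f_i$, and $N_\sigma$ counts the permutations of $\{1,\ldots,r\}$ preserving $\sigma$. Interchanging summations gives
\[
\widehat{w_{p,\sigma}}(g) = \frac{1}{N_\sigma\, p^{n+1}} \sum_{(P_i)} \sum_{f \in M_P} e^{-2\pi\sqrt{-1}\, f \bullet g / p},
\]
where $M_P \subseteq V^{\hom}(\FF_p)$ is the $(n-d+1)$-dimensional subspace of $f$ divisible by $P = P_1^{e_1} \cdots P_r^{e_r}$. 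The inner character sum equals $p^{n-d+1}$ when $g$ lies in the annihilator $M_P^\perp$ under the coefficient pairing, and vanishes otherwise, reducing the Fourier transform to a weighted count of tuples.

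For $g = 0$ the annihilator condition is vacuous and I would just estimate the total number of valid tuples. Using the standard count $p^{f_i}/f_i + O(p^{f_i/2})$ of monic irreducible binary forms of degree $f_i$ over $\FF_p$, together with the fact that enforcing pairwise distinctness costs a multiplicative factor $1 + O(p^{-1})$, the count of ordered distinct tuples is $\prod_i (p^{f_i}/f_i) + O(p^{\sum_i f_i - 1})$. Substituting $d - \sum_i f_i = \sum_i (e_i-1) f_i = k$ and $N_\sigma \prod_i f_i = \#\Aut(\sigma)$ then yields the main term $p^{-k}/\#\Aut(\sigma)$ with error $O(p^{-(k+1)})$, as required.

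For $g \neq 0$ the goal is to show that the number of tuples with $g \in M_P^\perp$ is $O(p^{\sum_i f_i - 1})$, saving one factor of $p$ over the unconditional count. Expanding the annihilator condition in coordinates, $g \in M_P^\perp$ becomes the linear system $\sum_j p_j g_{j+k} = 0$ for $k = 0, \ldots, n-d$ on the coefficients $p_j$ of $P$; that is, $P$ lies in the kernel of a Hankel-type map from degree-$d$ binary forms to degree-$(n-d)$ binary forms determined by $g$. Since $g \neq 0$, this kernel has codimension at least one in the space of all degree-$d$ forms, and the remaining task is to transfer this codimension saving to the subvariety parametrizing shape-$\sigma$ products with the $P_i$ distinct irreducible of the prescribed degrees.

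The hard part will be exactly this last transfer. The natural approach is an inductive one following Bhargava: strip off one factor $P_i$ at a time, noting that for all but $O(1)$ choices of the remaining factors the Hankel constraint, restricted to the single factor being varied, is itself a nontrivial linear condition and so is satisfied by a proportion $O(p^{-1})$ of candidates. Once this structural estimate is in hand, everything else is routine accounting of irreducible forms and error terms. A potential pitfall is ensuring the saving survives in corner cases where $d$ is close to $n$ or where several $f_i$ coincide, but these can be handled by minor case-splitting.
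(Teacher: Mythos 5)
First, a point of comparison: the paper does not prove this statement at all --- it is recalled verbatim from Bhargava (\cite{Bhargava_vdW}, Proposition 25) and used as a black box; the paper's own work at this spot is the derivation of Lemma \ref{lem:ftgen_pointed} \emph{from} it. So you are in effect reproving Bhargava's proposition. Your setup is the right one and matches the structural point the paper emphasizes: up to the symmetry factor, $w_{p,\sigma}$ is a sum of characteristic functions of the subspaces $M_P$ of multiples of $P=P_1^{e_1}\cdots P_r^{e_r}$, so $\widehat{w_{p,\sigma}}(g)=\frac{1}{N_\sigma}\sum_{(P_i)}p^{-d}\,\1\bigl[g\in M_P^\perp\bigr]$, and your $g=0$ computation (prime polynomial counts, distinctness costing $1+O(p^{-1})$, and the identities $\#\Aut(\sigma)=N_\sigma\prod_i f_i$, $d-\sum_i f_i=k$) is correct.

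The genuine gap is the $g\neq 0$ case, which you defer, and the inductive claim you propose to close it with is not correct as stated. Fix the tail $S=P_2^{e_2}\cdots P_r^{e_r}$, of degree $d'$, and vary $P_1$. For $p>n$ the powers $P_1^{e_1}$ of monic degree-$f_1$ forms span the whole space of binary forms of degree $e_1f_1$, so the constraint on $P_1$ is vacuous precisely when $g$ annihilates $S$ times all forms of degree $n-d'$, i.e.\ when $S$ lies in the kernel of the Hankel system one degree down. That bad locus is a linear subspace of codimension $\geq 1$; it typically contains on the order of $p^{d'-1}$ forms, and correspondingly many tails --- not $O(1)$ of them. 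What rescues the induction is that the bad locus has exactly the same Hankel-kernel shape as the original problem, so the right inductive statement is a codimension-one ($p^{-1}$) saving for the bad tails, not their finiteness: the count is at most (bad tails)$\cdot p^{f_1}$ plus (all tails)$\cdot O(p^{f_1-1})$, with the first term handled by the inductive hypothesis for the truncated splitting type. (Note also that the condition on $P_1$ has degree $e_1$ in its coefficients, so it is generally not a \emph{linear} condition; Schwartz--Zippel still gives the $O(p^{f_1-1})$ count.) Alternatively, a single global argument works: for $p>d$ the shape-$\sigma$ products affinely span the monic hyperplane of degree-$d$ forms (already the powers $(x-ay)^d$, $a\in\FF_p$, do), so they cannot all lie in the proper subspace cut out by the Hankel conditions when $g\neq 0$; hence some Hankel functional composed with $(P_1,\ldots,P_r)\mapsto\prod_iP_i^{e_i}$ is a nonzero polynomial of degree $O_n(1)$, and Schwartz--Zippel yields the required $O(p^{\sum_i f_i-1})$, the finitely many primes $p\leq n$ being absorbed into implied constants. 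Without one of these completions, the bound $O(p^{-(k+1)})$ for $g\neq 0$ --- the only part of the lemma that is actually delicate --- remains unproven.
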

  
  The significance of this function $w_{p,\sigma}$ is twofold: First, $w_{p,\sigma}(f)$ is nonnegative, and is equal to $1$ when $f$ has splitting type $\sigma$; second, the definition is arranged so that $w_{p,\sigma}$ is a sum of characteristic functions of subspaces, which makes the Fourier transform nonnegative, small, and easily computable.
  
  Bhargava uses this lemma to bound the number of integer polynomials having high index at a set of primes. In our setting, we also need to bound the number of integer polynomials having high index \emph{and} which vanish at a given point mod $p$; hence we modify the lemma as follows:
  \begin{lemma}\label{lem:ftgen_pointed}
    Let $\sigma=(f_1^{e_1}\cdots f_r^{e_r})$ 
    be a splitting type with $f_1 = 1$. Let $\deg(\sigma)=d$ and $\ind(\sigma)=k$.
    Let $w'_{p,\sigma}:V^{\hom}(\FF_p)\to\CC$ be defined by
    \begin{align*}
      w'_{p,\sigma}(f) \coloneqq &\text{ the number of $r$-tuples $(P_2,\ldots,P_r)$, up to the action of the group of}  \\[-.04in] & \text{  permutations of $\{2,\ldots,r\}$ preserving $\sigma$, such that the $P_i$ are distinct  } \\[-.04in] &\text{ irreducible binary forms where, for each $i$, we have $P_i(x,y)$ is } \\[-.04in] &\text{ monic as a polynomial in $x$, $\deg P_i=f_i$, and $y^{e_1} P_2^{e_2} \cdots P_r^{e_r}\mid f$}.
    \end{align*}
    Let $V^{\hom}_{e_1,\FF_p}$ denote the subspace of $V^{\hom}(\FF_p)$ comprising polynomials divisible by $y^{e_1}$, and let $\bigl(V^{\hom}_{e_1, \FF_p}\bigr)^\perp \subseteq V^{\hom}(\FF_p)^*$ be its dual. Then
    \begin{equation*}
      \widehat{w'_{p,\sigma}}(g)=
      \begin{cases}
        {\displaystyle\frac{p^{-(k+1)}}{\#{\Aut'}(\sigma)} + O(p^{-(k+2)})}	& \text{if $g \in \bigl(V^{\hom}_{e_1, \FF_p}\bigr)^\perp$;}\\[.1in]
        {O(p^{-(k+2)})}		& \text{if $g \notin \bigl(V^{\hom}_{e_1, \FF_p}\bigr)^\perp$}.\\
      \end{cases}
    \end{equation*}
  \end{lemma}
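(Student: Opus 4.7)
The plan is to mirror Bhargava's proof of Lemma \ref{lem:ftgen}, exploiting that $w'_{p,\sigma}$ is a sum of characteristic functions of linear subspaces. Concretely, I would write
$$w'_{p,\sigma}(f) = \sum_{\mathbf{P}} \1_{W_\mathbf{P}}(f),$$
where $\mathbf{P} = (P_2, \ldots, P_r)$ ranges over unordered tuples (modulo the permutations of $\{2, \ldots, r\}$ preserving $\sigma$) of distinct monic-in-$x$ irreducible binary forms of degrees $f_i$, and $W_{\mathbf{P}} \subseteq V^{\hom}(\FF_p)$ is the codimension-$d$ subspace of forms divisible by $y^{e_1} P_2^{e_2}\cdots P_r^{e_r}$. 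The critical containment is $W_\mathbf{P} \subseteq V^{\hom}_{e_1,\FF_p}$, so $\bigl(V^{\hom}_{e_1,\FF_p}\bigr)^\perp \subseteq W_\mathbf{P}^\perp$. Since $\widehat{\1_W}(g) = p^{-d}\,\1[g \in W^\perp]$ for any codimension-$d$ subspace $W \subseteq V^{\hom}(\FF_p)$, one obtains
$$\widehat{w'_{p,\sigma}}(g) = p^{-d}\cdot\#\{\mathbf{P} : g \in W_\mathbf{P}^\perp\}.$$

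For $g \in \bigl(V^{\hom}_{e_1,\FF_p}\bigr)^\perp$, every tuple $\mathbf{P}$ contributes. Let $s$ denote the number of permutations of $\{2, \ldots, r\}$ preserving $\sigma$, so that $\#\Aut'(\sigma) = s \cdot \prod_{i \geq 2} f_i$ (using $f_1 = 1$). The standard count of $p^{f_i}/f_i + O(p^{f_i-1})$ monic-in-$x$ irreducibles of degree $f_i$ over $\FF_p$ gives a total of $p^{\sum_{i\geq 2} f_i}/\#\Aut'(\sigma) + O(p^{\sum_{i\geq 2} f_i - 1})$ valid unordered tuples. A short calculation from the definitions yields $\sum_{i\geq 2} f_i = d - k - 1$, so multiplication by $p^{-d}$ produces the claimed main term $p^{-(k+1)}/\#\Aut'(\sigma) + O(p^{-(k+2)})$.

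For $g \notin \bigl(V^{\hom}_{e_1,\FF_p}\bigr)^\perp$, I would reduce to Bhargava's Lemma \ref{lem:ftgen}. Under the identification $V^{\hom}_{e_1,\FF_p} \cong V^{\hom}_{n-e_1}(\FF_p)$ given by $f = y^{e_1}\tilde f$, the subspace $W_\mathbf{P}$ corresponds to the codimension-$(d - e_1)$ subspace $\widetilde W_\mathbf{P}$ of $\tilde f$ divisible by $P_2^{e_2}\cdots P_r^{e_r}$, and $g$ descends to a nonzero functional $\bar g$ on the smaller space. Applying the same Fourier identity on the smaller space to Bhargava's $w_{p,\sigma'}$ for the derived splitting type $\sigma' = (f_2^{e_2}\cdots f_r^{e_r})$ (of degree $n - e_1$ and index $k - e_1 + 1$), and invoking his bound $\widehat{w_{p,\sigma'}}(\bar g) = O(p^{-(k - e_1 + 2)})$ at nonzero $\bar g$, controls the count $\#\{\mathbf{P} : g \in W_\mathbf{P}^\perp\}$ up to a correction for the tuples that Bhargava allows with some $P_i = y$ but $w'_{p,\sigma}$ excludes. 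Those excluded tuples force additional divisibility of $\tilde f$ by $y$ and contribute an error of at most the same order. Multiplying the resulting $O(p^{d-k-2})$ count by $p^{-d}$ yields the desired $O(p^{-(k+2)})$.

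The main obstacle is the careful bookkeeping in this reduction: tracking the codimension shift $d \mapsto d - e_1$, the index shift $k \mapsto k - e_1 + 1$, the switch from $\#\Aut$ to $\#\Aut'$, and the bounded-multiplicity correction from the excluded $P_i = y$ tuples, so that the final exponent of $p$ in the error term comes out exactly to $-(k+2)$ rather than being off by one.
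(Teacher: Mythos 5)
Your proposal is correct and follows essentially the same route as the paper: both exploit that $w'_{p,\sigma}$ is a sum of indicator functions of subspaces contained in $V^{\hom}_{e_1,\FF_p}$, and both reduce the bound away from $\bigl(V^{\hom}_{e_1,\FF_p}\bigr)^\perp$ to Lemma \ref{lem:ftgen} applied to the derived splitting type $\sigma'=(f_2^{e_2}\cdots f_r^{e_r})$ of index $k-e_1+1$, with the re-embedding into $V^{\hom}(\FF_p)$ contributing the factor $p^{e_1}$ that produces the stated exponents. The only minor difference is the main term, which you obtain by directly counting the admissible tuples $(P_2,\ldots,P_r)$, while the paper recovers it through an inclusion--exclusion over sub-splitting types to show that excluding $P_i = y$ does not diminish the leading coefficient $1/\#{\Aut'}(\sigma)$; both computations agree.
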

  \begin{proof}
    Rather than starting from scratch, we derive this lemma from the preceding one. Indeed, let $\sigma'$ be the splitting type obtained by deleting the first factor $f_1^{e_1} = 1^{e_1}$ from $\sigma$. Then $\Aut(\sigma') \isom \Aut'(\sigma)$, and $\ind(\sigma') = k - e_1 + 1$. We have that $w'_{p,\sigma}(f)$ vanishes unless $f \in V^{\hom}_{e_1,\FF_p}$ (implying in particular that $\widehat{w'}_{p,\sigma}$ is constant on cosets of $\bigl(V^{\hom}_{e_1,\FF_p}\bigr)^\perp$), and
    \[
    w'_{p,\sigma}(f) = \Breve{w}_{p,\sigma'}(f/y^{e_1})
    \]
    is \emph{almost} $w_{p,\sigma}(f/y^{e_1})$. We say ``almost'' because we need to exclude the case that one of the $P_i$ is $y$; so we define $\Breve{w}_{p,\sigma}$ to be just like $w_{p,\sigma}$, except that the $P_i(x,y)$ in the definition are not allowed to equal $y$. Since $w_{p,\sigma'}$ is a sum of characteristic functions of subspaces corresponding to the various choices of the $P_i$ and $\Breve{w}_{p,\sigma}$ is obtained by deleting some of the terms from this sum, we have, by Lemma \ref{lem:ftgen},
    \[
    \Breve{w}_{p,\sigma'}(f) \leq w_{p,\sigma'}(f) \textand
    \widehat{\Breve{w}_{p,\sigma'}}(g) \leq \widehat{w_{p,\sigma'}}(g) = \begin{cases}
      {\displaystyle\frac{p^{-(k-e_1+1)}}{\#{\Aut'}(\sigma)} + O(p^{-(k-e_1+2)})}	& \text{if $g \in V_{e_1, \FF_p}^\perp$;}\\[.1in]
      {O(p^{-(k-e_1+2)})}		& \text{if $g \notin V_{e_1, \FF_p}^\perp$}.
    \end{cases}
    \]
    Re-embedding $V^{\hom}_{e_1,\FF_p}$ into $V^{\hom}(\FF_p)$, the Fourier transform drops by a factor of $p^{e_1}$, yielding upper bounds of the claimed order of magnitude.
    
    To obtain that the main term is undiminished by the $w \mapsto \Breve{w}$ replacement, we can argue that
    \[
    \Breve{w}_{p,\sigma} = w_{p,\sigma} - \sum_{i : f_i = 1} \Breve{w}_{p,\sigma_i}
    \]
    where $\sigma_i$ is obtained by deleting $f_i^{e_i}$ from $\sigma$. Thus
    \begin{align*}
      \widehat{\Breve{w}_{p,\sigma}}(0) &= \widehat{w_{p,\sigma}}(0) - \sum_{i : f_i = 1} \widehat{\Breve{w}_{p,\sigma_i}}(0) \\
      &= \frac{p^{-(k-e_1+1)}}{\#{\Aut'}(\sigma)} + O(p^{-(k-e_1+2)}) + \sum_{i : f_i = 1} O(p^{-(k-e_1-e_i+1)}) \\
      &= \frac{p^{-(k-e_1+1)}}{\#{\Aut'}(\sigma)} + O(p^{-(k-e_1+2)}),
    \end{align*}
    as desired.
  \end{proof}

  We can now estimate the number of polynomials in Case I. 
  \begin{lemma} \label{lem:case1}
    Let $D$ be a positive integer. Let $C = \prod_{p\mid D} p$ be its radical, and let $D'^2 = \prod_{p\mid D} p^{2\ceil{v_p(D)/2}}$ be its smallest square multiple. Assume that $C < H^{1 + \delta}$. The number of reciprocal integer polynomials $f$ of height $\leq H$ for which $G_f \subseteq G_2$ and $D \mid \Disc K_g$ is
    \[
    \ll \frac{O(1)^{\omega(C)} H^{n+1}}{D'^2}.
    \]
  \end{lemma}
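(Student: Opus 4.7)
The plan is to upgrade the preceding $\epsilon$-bound to a sharp count via Fourier analysis on $V^{\hom}(\ZZ/C\ZZ)$, in the style of Case I of \cite{Bhargava_vdW}, with an extra ingredient that tracks the mandatory vanishing of $g$ at $\pm 2$ imposed by the $G_2$ parity constraint. By Lemma \ref{lem:G123_conds}\ref{it:G2}, $G_f \subseteq G_2$ is equivalent to $(-1)^n g(2) g(-2) \disc g$ being a square in $\ZZ$; combined with $D \mid \Disc K_g$ and Lemma \ref{lem:index_tame}, this forces, for each prime $p \mid D$ with $k_p := v_p(D)$, the local conditions $\ind(g \bmod p) \geq k_p$ and $v_p\bigl(g(2) g(-2) \disc g\bigr) \equiv 0 \pmod{2}$.

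For each $p \mid D$, I will build a non-negative weight $W_p \colon V^{\hom}(\FF_p) \to \RR_{\geq 0}$ dominating the indicator of $g$ satisfying the above. It is a sum, over degree-$n$ splitting types $\sigma$ with $\ind(\sigma) \geq k_p$, of either $w_{p,\sigma}$ from Lemma \ref{lem:ftgen} (when $\ind(\sigma)$ has the same parity as $k_p$, so that no further vanishing is needed to match parities) or $w'_{p,\sigma'}$ from Lemma \ref{lem:ftgen_pointed} with $\sigma'$ obtained from $\sigma$ by adjoining a factor $(x - 2y)$ or $(x + 2y)$ (when $\ind(\sigma)$ has the opposite parity, where the forced vanishing $g(\pm 2) \equiv 0 \pmod{p}$ supplies the missing parity). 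Summing the geometric series in the index via Lemmas \ref{lem:ftgen} and \ref{lem:ftgen_pointed} yields
\[
\widehat{W_p}(0) \ll \frac{O(1)}{p^{2 \ceil{k_p/2}}} \textand |\widehat{W_p}(y)| \ll \frac{O(1)}{p^{2 \ceil{k_p/2} + 1}} \text{ for } y \neq 0,
\]
where the extra factor of $p^{-1}$ in the odd-$k_p$ case comes precisely from the pointed linear factor.

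Finally, set $\Psi = \prod_{p \mid D} W_p$ on $V^{\hom}(\ZZ/C\ZZ)$ via CRT (as $C$ is squarefree) and let $\Phi$ be a Schwartz bump majorizing the characteristic function of the height-$\leq H$ box. Proposition \ref{prop:poisson} with $L = \ZZ^{n+1}$ and $M = C$ then gives
\[
\#\{g \text{ counted}\} \leq \sum_{g \in \ZZ^{n+1}} \Psi(g)\Phi(g) = \sum_{y \in \ZZ^{n+1}} \widehat{\Psi}(y)\, \widehat{\Phi}(y/C).
\]
The $y = 0$ term contributes $H^{n+1} \widehat{\Psi}(0) \ll O(1)^{\omega(C)} H^{n+1}/D'^2$ by the product of local estimates, which is the claimed bound. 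For $y \neq 0$, the extra $p^{-1}$ in $\widehat{W_p}(y_p)$ at each $p \mid D$ with $y \not\equiv 0 \pmod{p}$, combined with the Schwartz decay of $\widehat{\Phi}(y/C)$ (rapidly decaying in $|y|$ past a scale controlled by the hypothesis $C \leq H^{1+\delta}$), ensures that all nonzero Fourier modes sum to at most the same order as the main term, absorbing into the $O(1)^{\omega(C)}$ factor. The main obstacle lies in the second step: verifying that $W_p$ dominates the indicator on both parities of $\sigma$ simultaneously, carefully handling the two choices of pointed linear factor $(x \pm 2y)$, and showing that the geometric sum over admissible $\sigma$ collapses to the sharp aggregate factor $p^{-2\ceil{k_p/2}}$.
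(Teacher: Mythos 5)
Your overall outline (local weights built from Lemmas \ref{lem:ftgen} and \ref{lem:ftgen_pointed}, a smooth majorant, Poisson summation, main term from the zero mode) is the same as the paper's, but two steps as written do not go through. First, the construction of $W_p$ does not give the claimed bound $\widehat{W_p}(0)\ll p^{-2\ceil{v_p(D)/2}}$. You key the choice of plain versus pointed weight to the parity of $\ind(\sigma)$ relative to $k_p=v_p(D)$, but the squareness condition constrains the parity of $v_p(\Disc K_g)+v_p\bigl(g(2)g(-2)\bigr)$, and $\ind(\sigma)$ only bounds $v_p(\Disc K_g)$ from below; it carries no parity information about $v_p(\disc g)$. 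Concretely, when $v_p(D)$ is odd your $W_p$ contains the plain term $w_{p,\sigma}$ for splitting types with $\ind(\sigma)=v_p(D)$ (same parity), and by Lemma \ref{lem:ftgen} this single term already forces $\widehat{W_p}(0)\gg p^{-v_p(D)}$, a factor of $p$ too large. Losing that factor at every odd-valuation prime degrades the conclusion to roughly $O(1)^{\omega(C)}H^{n+1}/D$, which is not enough: in the application the sum over $D\geq H^{2+2\delta}$ only converges because one sums $1/D'^2$ over the integers $D'$ with bounded multiplicity. The correct local trichotomy (the one the paper uses) is: either $\ind(g\bmod p)\geq 2\ceil{v_p(D)/2}$ with no marked root, or $\ind\geq 2\ceil{v_p(D)/2}-1$ together with a root at $u=2$, or at $u=-2$; a counted $g$ with index exactly $v_p(D)$ odd at $p$ automatically satisfies $p\mid g(2)g(-2)$ (then $v_p(\Disc K_g)=v_p(D)$ and parity forces the vanishing), so the plain terms you added are superfluous and must be dropped.

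Second, the nonzero Fourier modes are exactly the hard part of this lemma, and your treatment of them is both too optimistic and factually off. Lemma \ref{lem:ftgen_pointed} does \emph{not} say that $\widehat{w'_{p,\sigma}}$ is smaller by $p^{-1}$ at every nonzero frequency: it retains its full size $\asymp p^{-(k+1)}$ on the whole dual subspace $\bigl(V^{\hom}_{e_1,\FF_p}\bigr)^\perp$, whose nonzero elements are not divisible by $p$. Since the hypothesis allows $H<C\leq H^{1+\delta}$, there are up to $(C/H)^{n+1}$ dual modes at which $\widehat\Phi$ provides no decay, and one must count how many of them fall into the products of these dual subspaces where no extra saving is available; Schwartz decay plus ``absorb into $O(1)^{\omega(C)}$'' cannot do this. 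This is precisely why the paper decomposes $\Psi_p=\Lambda_p+\Delta_p$, with $\Lambda_p$ a rescaled characteristic function of the sublattice $L_p$ (where $g(\pm2)$ and the appropriate derivatives vanish mod $p$) and $\Delta_p$ uniformly Fourier-small, expands $\Psi=\sum_{q\mid C}\Lambda_q\Delta_{C/q}$, and applies Poisson summation twice -- once twisted with modulus $C/q$, then once more, untwisted, to bound the number of points of $L_q^*$ in the relevant box -- before splitting on whether $H$ or $C/q$ dominates and using $n\delta\leq 1$. Without this sublattice decomposition (or an equivalent device), your argument does not close in the regime $C>H$, which is the entire point of allowing $C\leq H^{1+\delta}$ and is the regime in which the lemma is reused in Case III, subcase (i).
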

  \begin{rem}
    Note that in contrast to the notation in the rest of the paper, we do \emph{not} set $D = \Disc K_g$, but rather assume only that $D \mid \Disc K_g$. For Case I, we set $D = \Disc K_g$, but we will reuse this lemma in Case III, and there we will pick a general divisor $D$.    
  \end{rem}
  
  Note that Lemma \ref{lem:case1} implies the bound of Theorem \ref{thm:G2} on the number of $g$ in Case I because each $D'$ determines $D$ up to $O(1)^{\omega(C)}$ possibilities, and the total number of $g$ is thus
  \[
  \ll O(1)^{\omega(C)} H^{n+1} \sum_{D' \geq H^{1 + \delta}} \frac{1}{D'^2} \ll O(1)^{\omega(C)} H^{n-\delta} \ll H^{n - \delta + \epsilon} \ll H^n.
  \]
  
  \begin{proof}[Proof of Lemma \ref{lem:case1}]
    First of all, we divide out all primes $p \leq n$ from $D$. If there is at least one $K_g$ with $D \mid \disc K_g$, this change only affects $D$ by a bounded factor, since $v_p(\disc K_g)$ is uniformly bounded. Thus we can assume that every prime $p \mid C$ is at least $n$.
    
    For each prime $p \mid C$, let $k_p = v_p(D') = \ceil{v_p(D)}$. By Lemma \ref{lem:index_tame} (right part), any $f$ as in the statement of the theorem has its corresponding $g$ of a splitting type $\sigma_p = (f_1^{e_1} \cdots f_1^{e_r})$ at $p$ (or $0$ mod $p$: see below), where either
    \begin{enumerate}[$($a$)$]
      \item\label{it:0} $\ind(\sigma_p) \geq 2 k_p$, or
      \item\label{it:2} $\ind(\sigma_p) \geq 2 k_p - 1$ and $f_1 = 1$ is at $u = 2$, or
      \item\label{it:-2} $\ind(\sigma_p) \geq 2 k_p - 1$ and $f_1 = 1$ is at $u = -2$.
    \end{enumerate}
    We call an \emph{annotated splitting type} a $\sigma = \sigma_p$ with a choice of case (a), (b), or (c). For convenience, we let $j = j_p$ be the number of marked roots (0 in case \ref{it:0}, $1$ in cases \ref{it:2} and \ref{it:-2}), and mark various items with $j$ to show that the appropriate case is to be followed:
    \[
    w^{(j)}_{p,\sigma} = \begin{cases}
      w_{p,\sigma} \\
      w'_{p,\sigma},
    \end{cases}
    \Aut^{(j)}(\sigma) = \begin{cases}
      \Aut(\sigma) \\
      \Aut'(\sigma),
    \end{cases}
    \text{etc.}
    \]
    
    For $\sigma_p$ an annotated splitting type, let $\Psi_p = w^{(j_p)}_{p,\sigma_p}$ that picks out $g$ having splitting type $\sigma_p$, with a root at the indicated place in case (b) or (c). Let $\Psi : V_{\ZZ/C\ZZ} \to \RR$ be the product of the $\Psi_p$. Observe that there are only $O(1)^{\omega(C)}$ choices of the annotated splitting types for all $p$, and for each $g$ as in the statement of the lemma, there is a choice for which $\Psi(g) \geq 1$. (In the degenerate case that $p \mid g$, we pick $\sigma_p$ arbitrarily, because $\omega_{p,\sigma}(0) \geq 1$ for all $\sigma$.) Thus it suffices to prove, for a fixed choice of $\sigma_p$ and case (a)--(c) for each $p \mid C$, that
    \[
      \sum_{\Ht g \leq H} \Psi(g) \ll \frac{O(1)^{\omega(C)} H^{n+1}}{D'^2}.
    \]
    We claim that there is a decomposition $\Psi_p = \Lambda_p + \Delta_p$ with the following properties:
    \begin{itemize}
      \item $\Lambda_p = a_p \1_{L_p}$ is a rescaled characteristic function of a sublattice $L_p \subseteq V(\ZZ)$, with $a_p \leq 1$, and $\widehat{\Lambda_p} = \hat a_p \1_{L_p^\perp}$ where $\hat a_p \leq p^{-2 k_p}$;
      \item $\Delta_p$ has uniformly small Fourier transform: $\widehat{\Delta_p}(h) \ll p^{-2 k_p - 1}$.
    \end{itemize}
    Indeed, we take
    \[
    L_p = \begin{cases*}
      V(\ZZ) & case \ref{it:0} \\
      \{g \in V(\ZZ) : g(2) \equiv g'(2) \equiv \cdots \equiv g^{(e_{1,p} - 1)}(2) \equiv 0 \mod p\} & case \ref{it:2} \\
      \{g \in V(\ZZ) : g(-2) \equiv g'(-2) \equiv \cdots \equiv g^{(e_{1,p} - 1)}(-2) \equiv 0 \mod p\} & case \ref{it:-2}.
    \end{cases*}
    \]
    Note that in the last two cases, $L_p$ is the image of $V^{\hom}_{e_1,\FF_p}$ under the two relevant identifications of $V^{\hom}$ with $V$, the former sending $x^i y^{n-i}$ to $(u-2)^{n-i}$, the latter sending it to $(u+2)^{n-i}$. Choose the constant
    \[
    a_p = \frac{ [V(\ZZ) : L_p] \cdot p^{-{\ind \sigma} - j}}{\#{\Aut^{(j)}}(\sigma)} = \frac{p^{-{\sum_{i > j} (e_i - 1)f_i}}}{\#{\Aut^{(j)}}(\sigma)} \leq 1
    \]
    so that $\widehat{\Lambda_p}$ is the dominant term
    \[
    \hat a_p \1_{L_p^\perp}, \quad \hat a_p = \frac{p^{-{\ind \sigma} - j}}{\#{\Aut^{(j)}}(\sigma)} \leq p^{-2 k_p}
    \]
    of $\widehat{\Psi_p}$ as computed in Lemmas \ref{lem:ftgen} and \ref{lem:ftgen_pointed}, which show the smallness of $\widehat{\Delta_p}$.
    
    Now
    \[
    \Psi = \prod_p \(\Lambda_p + \Delta_p\) = \sum_{q \mid C} \Lambda_q \Delta_{C/q},
    \]
    where we set
    \[
    \Lambda_q = \prod_{p \mid q} \Lambda_p = a_q \1_{L_q}, \quad a_q = \prod_{p \mid q} a_p, \quad L_q = \bigintsec_{p \mid q} L_p, \textand \Delta_q = \prod_{p \mid q} \Delta_p.
    \]

    We now use Fourier analysis, as in Case I of \cite{Bhargava_vdW}, to rewrite the the desired count of $g$. Let $\phi : \RR \to \RR$ be a Schwartz function on $\RR$ with the following properties:
    \begin{enumerate}[$($a$)$]
      \item $\phi(u)$ is nonnegative, and $\phi(u) \geq 1$ for $\size{u} \leq 1$;
      \item $\phi$ is compactly supported;
      \item The Fourier transform $\widehat{\phi}$ is real and nonnegative.
    \end{enumerate}
    Such a $\phi$ can be constructed, for instance, by taking a usual even ``bump function'' and convolving with itself, which squares the Fourier transform, ensuring nonnegativity. Then let $\Phi : V(\RR) \to \RR$ be defined by $\Phi(g) = \phi(b_0)\phi(b_1) \cdots \phi(b_n)$. We use this as a smoothing factor:
    \begin{align*}
      X&\coloneqq \#{\{g \in V(\ZZ) : G_f \subseteq G_2, D \mid \disc K_g, \Ht g \leq H\}} \\
      &\leq \sum_{\Ht g \leq H} \Psi(g) \\
      &\leq \sum_{g \in V(\ZZ)} \Psi(g) \Phi\(\frac{g}{H}\) \\
      &= \sum_{q \mid C} \sum_{g \in V(\ZZ)} \Lambda_q(g) \Delta_{C/q}(g) \Phi\(\frac{g}{H}\).
    \end{align*}
    Since $C$ has $O(1)^{\omega(C)}$ divisors $q$, it suffices to prove that for all $q \mid C$,
    \begin{equation*}
      X_q \coloneqq \sum_{g \in V(\ZZ)} \Lambda_q(g) \Delta_{C/q}(g) \Phi\(\frac{g}{H}\) \ll \frac{O(1)^{\omega(C)} H^{n+1}}{D'^2}.
    \end{equation*}
    We apply twisted Poisson summation \ref{prop:poisson} with modulus $M = C/q$, which is coprime to $[V(\ZZ) : L_q] \mid q^n$, to get
    \begin{align}
      X_q &= a_q \sum_{g \in L_q} \Delta_{C/q}(g) \Phi\(\frac{g}{H}\) \nonumber \\
      &= \frac{a_q H^{n+1}}{[V(\ZZ) : L_q]} \sum_{h \in L_q^*} \widehat{\Delta_{C/q}} (h) \widehat\Phi\(\frac{q H h}{C}\) \label{z:poisson1} \\
      &\ll \frac{a_q H^{n+1}}{[V(\ZZ) : L_q]} \prod_{p \mid \frac{C}{q}} O\(p^{-2k_p - 1}\) \sum_{h \in L_q^*} \widehat{\Phi}\(\frac{q H h}{C}\). \nonumber
      \intertext{We apply Poisson summation again, now untwisted \eqref{eq:poisson}, to get}
      X_q &\ll \frac{a_qC^{n+1}}{q^{n+1}} \prod_{p \mid \frac{C}{q}} O\(p^{-2k_p - 1}\) \sum_{g \in L_q} \Phi\(\frac{C g}{q H}\) \nonumber \\
      &\leq \frac{a_qC^{n+1}}{q^{n+1}} \prod_{p \mid \frac{C}{q}} O\(p^{-2k_p - 1}\) \sum_{g \in V(\ZZ)} \Phi\(\frac{C g}{q H}\) \nonumber \\
      &\ll a_q \prod_{p \mid \frac{C}{q}} O\(p^{-2k_p - 1}\)\(\frac{C}{q} \max\left\{\frac{qH}{C}, 1\right\}\)^{n+1} \nonumber \\
      &\ll O(1)^{\omega(C)} \prod_{p \mid q} p^{-2k_p} \prod_{p \mid \frac{C}{q}} O\(p^{-2k_p - 1}\) \cdot \max\left\{H, \frac{C}{q}\right\}^{n+1} \nonumber \\
      &= \frac{O(1)^{\omega(C)} q}{CD'^2} \max\left\{H, \frac{C}{q}\right\}^{n+1}.
    \end{align}
    If the first argument to the maximum dominates, we get a bound
    \[
    X_q \ll \frac{O(1)^{\omega(C)}q}{C D'^2} H^{n+1} \leq \frac{O(1)^{\omega(C)} H^{n+1}}{D'^2},
    \]
    as desired. If instead the second argument dominates, we get a bound
    \[
    X_q \ll \frac{O(1)^{\omega(C)}q}{C D'^2} \(\frac{C}{q}\)^{n+1} = \frac{O(1)^{\omega(C)} C^n}{q^n D'^2} \leq \frac{O(1)^{\omega(C)} H^{n + n\delta}}{D'^2},
    \]
    as desired, since $n \delta \leq 1$.
  \end{proof}
  
  \subsection{Case II: \texorpdfstring{$D \leq H^{2 + 2\delta}$}{D < H²}} \label{sec:case2}
  Here we simply invoke Case II of Bhargava's treatment \cite[\textsection 5]{Bhargava_vdW}, which shows that the number of irreducible $g$ of height $< H$ defining a number field $K_g$ of primitive Galois group $G_g$ and discriminant $D \leq H^{2+2\delta}$ is $O(H^n)$ (or $O(H^{n-1})$ in the monic case). In our setting we are only concerned with the case $G_g = S_n$. We do not need to use the added knowledge that $G_f \subseteq G_2$.
  \subsection{Case III: \texorpdfstring{$C \geq H^{1+\delta}$}{C > H}} \label{sec:case3}
  Here we adapt the method of Bhargava using the double discriminant.
  
  \begin{lemma}[\cite{Bhargava_vdW}, Proposition 33]\label{lem:p_reasons}
    Let $p > 2$ be a prime. If $h(x_1, \ldots , x_n)$ is an integer polynomial, such that $h(c_1, \ldots , c_n)$ is a multiple of $p^2$ for mod $p$ reasons, that is, $h(c_1 + p d_1, \ldots , c_n + p d_n)$ is a multiple of $p^2$ for all $(d_1, \ldots , d_n) \in \ZZ^n$, then
    $\frac{\partial}{\partial x_n} h(c_1, \ldots , c_n)$ is a multiple of $p$.
  \end{lemma}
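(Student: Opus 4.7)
My plan is to prove this by a direct one-line Taylor expansion modulo $p^2$. Working monomial by monomial, for each term $x_1^{a_1} \cdots x_n^{a_n}$ appearing in $h$, the binomial theorem gives
\[
\prod_{i=1}^n (c_i + p d_i)^{a_i} = \prod_{i=1}^n c_i^{a_i} + p \sum_{i=1}^n a_i d_i c_i^{a_i - 1} \prod_{j \neq i} c_j^{a_j} + p^2 \cdot (\text{integer}),
\]
where the integrality of the $p^2$ coefficient is automatic since each individual binomial expansion lives in $\ZZ$. Summing over the monomials of $h$ and recognizing the linear-in-$d$ coefficient as a partial derivative, I obtain the integer congruence
\[
h(c_1 + p d_1, \ldots, c_n + p d_n) \equiv h(c_1,\ldots,c_n) + p \sum_{i=1}^n d_i \frac{\partial h}{\partial x_i}(c_1,\ldots,c_n) \pmod{p^2},
\]
valid for every $(d_1, \ldots, d_n) \in \ZZ^n$.

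Next, I apply the hypothesis. Setting $d = 0$ already forces $p^2 \mid h(c_1, \ldots, c_n)$, so the displayed congruence shows that the hypothesis is equivalent to
\[
p \sum_{i=1}^n d_i \frac{\partial h}{\partial x_i}(c_1,\ldots,c_n) \equiv 0 \pmod{p^2} \quad \text{for every } (d_1,\ldots,d_n) \in \ZZ^n,
\]
or equivalently that the linear form $d \mapsto \sum_i d_i \partial_i h(c)$ on $\ZZ^n$ vanishes modulo $p$. Specializing to the $n$th standard basis vector $d = (0, \ldots, 0, 1)$ immediately produces $p \mid \partial h / \partial x_n\,(c_1,\ldots,c_n)$, which is exactly the conclusion.

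There is essentially no obstacle: the whole content of the lemma is Taylor's theorem modulo $p^2$ for integer polynomials, and the only point requiring a moment's thought is the integrality of the error term, which is guaranteed by expanding via the binomial theorem rather than with formal derivatives. As a sanity check, the same argument would produce $p \mid \partial h / \partial x_i (c)$ for every $i$, which is stronger than stated; and the hypothesis $p > 2$ is not actually needed for this particular implication (the binomial argument works for $p = 2$ as well), though it is presumably imposed for consistency with the uses of the lemma later.
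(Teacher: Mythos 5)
Your proof is correct: the Taylor expansion $h(c+pd)\equiv h(c)+p\sum_i d_i\,\partial_i h(c)\pmod{p^2}$ with integer coefficients, followed by specializing $d$ to a standard basis vector, is exactly the standard argument, and your side observations (the conclusion holds for every $\partial_i$, and $p>2$ is not needed for this implication) are also accurate. Note that the paper itself gives no proof of this lemma — it is quoted from Bhargava's Proposition 33, whose proof is the same expansion-mod-$p^2$ argument you give — so your write-up matches the source's approach.
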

  
  Let
  \[
  h = g(2) g(-2) \disc g,
  \]
  considered as a polynomial in the coefficients $b_0, \ldots, b_n$ of $g$. Let $p \mid C$, $p > n$. We claim that $p^2 \mid h$ for mod $p$ reasons. We have that $p \mid D$, and $p^2$ divides the square $(-1)^n D g(2) g(-2)$. If $p^2 \mid D$, then by the left part of Lemma \ref{lem:index_tame}, the index of $g$ modulo $p$ is at least $2$. So by the right part of that same lemma, we have $p^2 \mid \disc g$ for mod $p$ reasons. Otherwise, we have $p \mid D$, so $p \mid \disc g$, and either $p \mid g(2)$ or $p \mid g(-2)$. Thus in all cases the product $h$ is divisible by $p^2$ for mod $p$ reasons. By Lemma \ref{lem:p_reasons}, this implies that the derivative $\frac{\partial}{\partial b_0} h$ with respect to the constant term is divisible by $p$. Hence their resultant
  \[
  R(b_1, \ldots, b_{n}) = \Res_{b_0} \(h, \frac{\partial}{\partial b_0} h\) = \pm b_n \disc_{b_0} h
  \]
  is a multiple of $p$.
  
  The polynomial $R$ is the analogue in our setting of the \emph{double discriminant} $\mathrm{DD}$ of \cite[Proposition 33]{Bhargava_vdW}. We have
  \begin{align}
    R(b_1, \ldots, b_{n}) &= \pm b_n \disc_{b_0} (g(2) g(-2) \disc_u g) \nonumber \\
    &= \pm b_n \disc_{b_0} \disc_u g \cdot \Res_{b_0}\bigl(g(2), g(-2)\bigr)^2 \Res_{b_0}\bigl(g(2), \disc_u g\bigr)^2 \Res_{b_0}\bigl(g(-2), \disc_u g\bigr)^2 \nonumber \\
    &= \pm b_n \disc_{b_0} \disc_u g \cdot \bigl(g(2) - g(-2)\bigr)^2 \Bigl(\disc_u \bigl(g(u) - g(2)\bigr)\Bigr)^2 \Bigl(\disc_u \bigl(g(u) - g(-2)\bigr)\Bigr)^2, \label{eq:fzn_R}
  \end{align}
  where in the last step, we take advantage of the fact that $g(\pm 2)$ are linear in $b_0$ to use the standard formula
  \[
    \Res_x\bigl(x - a, P(x)\bigr) = P(a).
  \]
  In particular, $R$ is a product of which one factor is the double discriminant $\mathrm{DD}(g) = \disc_{b_0} \disc_u g$. One easily sees from the factorization \eqref{eq:fzn_R} that $R$ is not identically zero as a function of $b_1,\ldots, b_{n}$.
  
  We now proceed as in \cite{Bhargava_vdW}. The number of $b_1, \ldots, b_{n} \in [-H, H]^{n}$ such that $R(b_1, \ldots, b_{n}) = 0$ is $O(H^{n-1})$ (by, e.g., \cite[Lemma 3.1]{Bhargava_geosieve}), and so the number of $g$ with such $b_1, \ldots, b_{n}$ is $O(H^{n})$.
  We now fix $b_1, \ldots, b_{n}$ such that $R(b_1, \ldots, b_{n}) \neq 0$. Then $R(b_1, \ldots, b_{n})$
  has at most $O_\epsilon(H^\epsilon)$ factors $C > H$. Once $C$ is determined by $b_1, \ldots, b_{n}$ (up to $O_\epsilon(H^\epsilon)$
  possibilities), then the number of solutions for $b_0$ (mod $C$) to $h \equiv 0 \mod C$ is
  $(\deg_{b_0} h)^{\omega(C)} = O_\epsilon(H^\epsilon)$, as the number of possibilities for $b_0$ modulo $p$ such that
  $h \equiv 0 \mod p$ for each $p \mid C$ is at most $\deg_{b_0}(h)$. Since $C > H$, the number of possibilities for $b_0 \in [-H, H]$ is also at most
  $O_\epsilon(H^\epsilon)$, and so the total number of $g$ in this case is $O_\epsilon(H^{n+\epsilon})$.
  
  To eliminate the $\epsilon$, we divide into two subcases, the first of which is reduced to Case I, just as in \cite{Bhargava_vdW}:
  
  \subsubsection*{Subcase (i): $A=\displaystyle\prod_{\substack{\scriptstyle p\mid C \\ \scriptstyle p>H^{\delta/2}}} p \leq H$.}
  
  In this subcase, $C$ has a factor $C_\I$ between $H^{1+\delta/2}$ and $H^{1+\delta}$, with $A\mid C_\I \mid C$. Pick $C_\I$ to be the largest such factor, and let $D_\I = \prod_{p\mid C_\I} p^{v_p(D)}$. We now appeal to Lemma \ref{lem:case1} from Case I, with $C_\I$ in place of $C$, and $D_\I$ in place of $D$. (Note that $D$ determines $C$, $C_\I$, and $D_\I$.) We get that the number of $g$ with a given $D$ is at most
  \[
  \ll \frac{O(1)^{\omega(C_\I) H^{n+1}}}{D_\I'^2},
  \]
  where
  \[
  D_\I' = \prod_{p\mid C_\I} p^{\ceil{v_p(D)/2}} \geq C_\I \geq H^{1 + \delta/2}.
  \]
  For each $D_\I'$, there are at most $2^{\omega(D)}$ values of $D_\I$, so the total number of reciprocal polynomials in this subcase is
  \[
  \sum_{D_\I' > H^{1 + \delta/2}} \frac{O(1)^{\omega(C_\I)} H^{n+1}}{D_\I'^2} = O_\epsilon(H^{n - \delta/2+\epsilon}) =O(H^{n}).
  \]
  \subsubsection*{Subcase (ii): $A=\displaystyle\prod_{\substack{\scriptstyle p\mid C \\ \scriptstyle p>H^{\delta/2}}} p > H$.}
  
  In this subcase, we carry out the original argument of Case III, with $C$ replaced by~$A$.
  We have $A\mid R(b_1,\ldots,b_{n})$. 
  
  Fix one of the $O(H^n)$ choices of $b_1,\ldots,b_{n}$ such that $R(b_1,\ldots,b_{n})\neq 0$.   Being bounded above by a fixed power of $H$, we see that 
  $R(b_1,\ldots,b_{n})$ can have at most a bounded number of possibilities for the factor $A$ (since all prime factors of $A$ are bounded below by a fixed positive power of $H$).
  Once $A$ is determined by $b_1,\ldots,b_{n}$,   then the number of solutions for $b_0$ (mod $A$) to $\Disc(f)\equiv0$ (mod $A$) is $O(n^{\omega(A)})=O(1)$. 
  Since $A>H$, the total number  of $f$ in this subcase is also   $O(H^{n})$, completing the proof of Theorem \ref{thm:G2} in the non-monic case.
  
  \subsection{Remarks on the monic case} \label{sec:G2_monic}
  It would be impractical to replicate the full proof of Theorem \ref{thm:G2} in the monic case, most of which consists of changing $n$ to $n-1$ and correcting calculations appropriately. The following changes are worthy of note:
  \begin{itemize}
    \item Unlike in the non-monic case, the space $V(R)$ of monic polynomials over a ring does not have a natural origin. We must fix one, such as $g(u) = u^n$, in order to carry out the Fourier analysis.
    \item In Case I, we replace Lemma \ref{lem:ftgen} with the following lemma from Bhargava:
    \begin{lemma}[\cite{Bhargava_vdW}, Proposition 29]\label{lem:ftgen2}
      Let $\sigma=(f_1^{e_1}\cdots f_r^{e_r})$ be a splitting type with $\deg(\sigma)=d$ and $\ind(\sigma)=k$.  Let $w_{p,\sigma} : V(\FF_p) \to \CC$ be defined by
      \begin{align*}
        w_{p,\sigma}(f) \coloneqq &\text{ the number of $r$-tuples $(P_1,\ldots,P_r)$, up to the action of the group of}  \\[-.04in] & \text{  permutations of $\{1,\ldots,r\}$ preserving $\sigma$, such that the $P_i$ are distinct } \\[-.04in] &\text{ irreducible monic polynomials with $\deg P_i=f_i$ for each $i$ and $P_1^{e_1}\cdots P_r^{e_r}\mid f$}.
      \end{align*}
      Then 
      \begin{equation*}
        \widehat{w_{p,\sigma}}(g)=
        \begin{cases}
          {\displaystyle\frac{p^{-k}}{\#{\Aut}(\sigma)} + O(p^{-(k+1)})}	& \text{if } {g=0};\\
          {O(p^{-(k+1)})}		& \text{if $g\neq 0$ and $d<n$;}\\
          O(p^{-(k+1/2)}) & \text{if $g\neq 0$ and $d=n$.} 
        \end{cases}
      \end{equation*}
    \end{lemma}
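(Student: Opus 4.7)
The plan is to use Fourier analysis on the finite abelian group $V(\FF_p)\cong\FF_p^n$, parallel to the approach behind Lemma \ref{lem:ftgen}. I would begin by unfolding the definition into an average of indicators:
\[
w_{p,\sigma}(f) \;=\; \frac{\prod_i f_i}{\#\Aut(\sigma)}\sum_{(P_1,\ldots,P_r)} \1_{Q\mid f}, \qquad Q = P_1^{e_1}\cdots P_r^{e_r},
\]
where the sum runs over ordered $r$-tuples of distinct monic irreducible polynomials with $\deg P_i=f_i$. By linearity of the Fourier transform, the problem reduces to computing $\widehat{\1_{Q\mid f}}(g)$ for each $Q$ and summing. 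For $Q$ of degree $d$, the set $\{f\in V(\FF_p):Q\mid f\}$ is empty when $d>n$, a single point $\{Q\}$ when $d=n$ (Fourier transform is a pure character of modulus $p^{-n}$), and an affine translate of a linear subspace of dimension $n-d$ when $d<n$ (Fourier transform supported on a codimension-$(n-d)$ annihilator $L_Q^\perp$, with constant modulus $p^{-d}$ there and zero elsewhere).

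At $g=0$ all Fourier values add constructively. I would count tuples using the prime polynomial theorem $\pi_{f_i}(p)=p^{f_i}/f_i+O(p^{f_i/2}/f_i)$ together with a brief inclusion-exclusion for the distinctness condition, obtaining $p^{d-k}/\prod_i f_i+O(p^{d-k-1}/\prod_i f_i)$ tuples (the $-1$ exponent is achieved because when $f_i=1$ the prime count is exact and only distinctness contributes an $O(p^{d-2})$ correction, and when $f_i\geq 2$ the prime-count error is already $O(p^{f_i-1})$). Multiplying by $p^{-d}$ and dividing by $\#\Aut(\sigma)/\prod_i f_i$ yields the stated main term. For $g\neq 0$ and $d<n$, the Fourier transform of $\1_{Q\mid f}$ vanishes unless $g\in L_Q^\perp$; that condition is a single linear equation on $Q$, cutting the set of contributing tuples by a factor of approximately $p$ and leaving the bound $O(p^{-(k+1)})$.

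The main obstacle is the remaining case $g\neq 0$, $d=n$: here every tuple contributes a character value of modulus $p^{-n}$, so the triangle inequality alone only yields $O(p^{-k})$, one factor of $p^{1/2}$ short of the claim. To recover the missing $p^{1/2}$, I would apply a Weil-type bound to the exponential sum
\[
\sum_{(P_1,\ldots,P_r)} e^{-2\pi \sqrt{-1}\, Q(P_\bullet)\bullet g/p}
\]
over the variety of tuples of irreducibles, whose $\FF_p$-point count is $O(p^{n-k})$. The key verification is nontriviality of the pulled-back additive character whenever $g\neq 0$; the worst case $\sigma=(1^n)$ collapses to the classical one-variable bound $\bigl|\sum_{a\in\FF_p} e^{2\pi\sqrt{-1}\, P(a)/p}\bigr|\ll\sqrt{p}$ applied to the polynomial $a\mapsto (u-a)^n\bullet g$, which is nonconstant for $g\neq 0$ by inspection of its top coefficients $\binom{n}{j}(-1)^{n-j}$. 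Combining these three regimes produces exactly the uniform bounds in the lemma, with the $p^{-(k+1/2)}$ exponent matching the $d=n$, $\sigma=(1^n)$ extremum.
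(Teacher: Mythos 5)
This lemma is not proved in the paper at all: it is imported verbatim from Bhargava (\cite{Bhargava_vdW}, Proposition 29), so the only comparison available is with Bhargava's argument, whose overall shape you have reproduced: unfold $w_{p,\sigma}$ into indicators of divisibility by $Q=P_1^{e_1}\cdots P_r^{e_r}$, get the main term at $g=0$ from the prime polynomial theorem, and for $g\neq 0$ exploit that each indicator has Fourier transform supported on a small annihilator. Your $g=0$ computation and the arithmetic $p^{d-k}\cdot p^{-d}\cdot p^{-1}=p^{-(k+1)}$ for $d<n$ are right in outline, though even there the assertion that the annihilator condition ``cuts the set of contributing tuples by a factor of approximately $p$'' is not automatic: the $Q$ of a fixed splitting type are not a linear family, so you must check that the relevant linear form in the coefficients of $Q$ is a nonconstant polynomial in the tuple parameters (it can degenerate, e.g.\ when it involves only the fixed leading coefficient) and then invoke a Schwartz--Zippel/point-counting step, or reinterpret the annihilator condition as a divisibility condition as Bhargava does.

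The genuine gap is the case $d=n$, $g\neq 0$, which is the only place the exponent $k+\tfrac12$ (the entire content of the third line of the display) is at stake. There $\{f: Q\mid f\}$ is the single point $Q$, the trivial bound gives only $O(p^{-k})$, and what is needed is square-root cancellation in $\sum_{(P_1,\ldots,P_r)} e^{-2\pi\sqrt{-1}\,Q\bullet g/p}$ over tuples of distinct monic irreducibles of degrees $f_i$. ``Apply a Weil-type bound to the exponential sum over the variety of tuples of irreducibles'' is not an available off-the-shelf step: monic irreducibles of degree $f_i\geq 2$ are not the $\FF_p$-points of a subvariety of coefficient space in a form to which Weil/Deligne bounds apply directly, so one must first pass to the root parametrization over $\FF_{p^{f_i}}$ (or quote a Hayes/Katz-type theorem on additive character sums over irreducible polynomials), and one must then prove, uniformly in $p$ and $g\neq 0$ and for \emph{every} splitting type $\sigma$, that the resulting phase is nondegenerate, controlling exceptional fibers if one sums one variable at a time. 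You verify nondegeneracy only in the extreme case $\sigma=(1^n)$; already for $\sigma=(1^{e_1}1^{e_2})$ the phase is a two-variable polynomial and one must show it is nonconstant in $a$ for all but $O(1)$ values of $b$ (and handle those $b$ separately), and for $f_i\geq 2$ the irreducibility constraint has to be removed before any Weil input is usable. Since this missing cancellation is exactly the assertion of the lemma in the $d=n$ case, the proposal as written does not establish it.
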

    
    \item We then replace Lemma \ref{lem:ftgen_pointed} as follows:
    
    \begin{lemma}
      Let $\sigma=(f_1^{e_1}\cdots f_r^{e_r})$ be a splitting type with $f_1 = 1$. Let $\deg(\sigma)=d$ and $\ind(\sigma)=k$.  Let $w'_{p,\sigma} : V(\FF_p) \to \CC$ be defined by
      \begin{align*}
        w_{p,\sigma}(f) \coloneqq &\text{ the number of $r$-tuples $(P_1,\ldots,P_r)$, up to the action of the group of}  \\[-.04in] & \text{  permutations of $\{2,\ldots,r\}$ preserving $\sigma$, such that the $P_i$ are distinct } \\[-.04in] &\text{ irreducible monic polynomials with $\deg P_i=f_i$ for each $i$, $f_1 = x$, and $P_1^{e_1}\cdots P_r^{e_r}\mid f$}.
      \end{align*}
      Let $V_{e_1,\FF_p}$ denote the subspace of $V(\FF_p)$ comprising polynomials divisible by $x^{e_1}$, and let $V_{e_1, \FF_p}^\perp \subseteq V(\FF_p)^*$ be its dual. Then
      \begin{equation*}
        \widehat{w_{p,\sigma}}(g)=
        \begin{cases}
          {\displaystyle\frac{p^{-(k+1)}}{\#{\Aut'}(\sigma)} + O(p^{-(k+2)})}	& \text{if $g \in V_{e_1, \FF_p}^\perp$}\\[.125in]
          {O(p^{-(k+2)})}		& \text{if $g \notin V_{e_1, \FF_p}^\perp$ and $d<n$;}\\[.02in]
          O(p^{-(k+3/2)}) & \text{if $g \notin V_{e_1, \FF_p}^\perp$ and $d=n$.} 
        \end{cases}
      \end{equation*}
    \end{lemma}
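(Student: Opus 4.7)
The plan is to deduce this monic pointed version from the monic unpointed version, namely Lemma \ref{lem:ftgen2}, exactly in parallel with how Lemma \ref{lem:ftgen_pointed} was deduced from Lemma \ref{lem:ftgen}. Let $\sigma'$ be the splitting type obtained by deleting the first factor $1^{e_1}$ from $\sigma$. Then $\deg \sigma' = d - e_1$, $\ind \sigma' = k - e_1 + 1$, and $\#\Aut(\sigma') = \#\Aut'(\sigma)$. Observe that $w'_{p,\sigma}(f)$ vanishes unless $x^{e_1} \mid f$, which in particular shows that $\widehat{w'_{p,\sigma}}$ is constant on cosets of $\bigl(V_{e_1,\FF_p}\bigr)^\perp$. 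For $f$ divisible by $x^{e_1}$, one has $w'_{p,\sigma}(f) = \Breve{w}_{p,\sigma'}(f/x^{e_1})$, where $\Breve{w}_{p,\sigma'}$ is defined exactly as $w_{p,\sigma'}$ except that the $P_i$ are forbidden to equal $x$.

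Since $\Breve{w}_{p,\sigma'}$ is obtained from $w_{p,\sigma'}$ by deleting a nonnegative subset of the characteristic-function summands corresponding to tuples in which some $P_i = x$, one has $\Breve{w}_{p,\sigma'} \leq w_{p,\sigma'}$ pointwise and thus $\widehat{\Breve{w}_{p,\sigma'}}(h) \leq \widehat{w_{p,\sigma'}}(h)$ for all $h$. Applying Lemma \ref{lem:ftgen2} to $\sigma'$ on the $(n-e_1)$-dimensional monic space, we obtain off-zero Fourier bounds of $O(p^{-(k-e_1+2)})$ when $d - e_1 < n - e_1$ and $O(p^{-(k - e_1 + 3/2)})$ when $d - e_1 = n - e_1$; these conditions correspond precisely to $d < n$ and $d = n$ in the original problem. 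The re-embedding $V_{e_1,\FF_p} \hookrightarrow V(\FF_p)$, a sublattice of codimension $e_1$, scales the Fourier transform by $p^{-e_1}$, producing the claimed $O(p^{-(k+2)})$ and $O(p^{-(k+3/2)})$ bounds off the annihilator $\bigl(V_{e_1,\FF_p}\bigr)^\perp$.

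The one subtle step, which is the same subtlety as in the proof of Lemma \ref{lem:ftgen_pointed}, is to verify that the main term at the origin is not diminished by the replacement $w_{p,\sigma'} \rightsquigarrow \Breve{w}_{p,\sigma'}$. I would argue by inclusion--exclusion,
\[
  \Breve{w}_{p,\sigma'} = w_{p,\sigma'} - \sum_{\substack{i \geq 2 \\ f_i = 1}} \Breve{w}_{p,\sigma'_i},
\]
where $\sigma'_i$ is $\sigma'$ with an additional factor $1^{e_i}$ deleted; each subtracted term has strictly smaller index at the origin, so by induction on the number of degree-$1$ factors, its main-term contribution is of order $p^{-(k - e_1 - e_i + 1)}$ and is absorbed into the $O(p^{-(k - e_1 + 2)})$ error, leaving
\[
  \widehat{\Breve{w}_{p,\sigma'}}(0) = \frac{p^{-(k-e_1+1)}}{\#\Aut'(\sigma)} + O(p^{-(k-e_1+2)}).
\]
Scaling this by the $p^{-e_1}$ re-embedding factor gives the stated main term $p^{-(k+1)}/\#\Aut'(\sigma)$ with error $O(p^{-(k+2)})$ on $\bigl(V_{e_1,\FF_p}\bigr)^\perp$.

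The transition from the binary-form space $V^{\hom}$ to the monic space $V$ introduces no structurally new ingredient; the only genuinely new feature compared to Lemma \ref{lem:ftgen_pointed} is the sharper $O(p^{-(k+3/2)})$ bound in the case $d = n$, which is inherited directly from the corresponding improved bound already present in Lemma \ref{lem:ftgen2}. I expect the main obstacle to be purely bookkeeping --- tracking the effect of the subspace re-embedding on the Fourier transform and confirming that the inclusion--exclusion corrections are of strictly lower order than the leading term --- rather than anything requiring a genuinely new idea.
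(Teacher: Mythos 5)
Your reduction is exactly the route the paper intends: the lemma is stated in Section \ref{sec:G2_monic} without proof because it is meant to follow from Bhargava's monic estimate (Lemma \ref{lem:ftgen2}) by the same deletion-of-$1^{e_1}$ argument by which Lemma \ref{lem:ftgen_pointed} follows from Lemma \ref{lem:ftgen}, and your bookkeeping of degrees, indices, the re-embedding factor $p^{-e_1}$, and the translation of the $d<n$ versus $d=n$ dichotomy is correct.

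There is, however, one step whose justification does not survive the passage from $V^{\hom}(\FF_p)$ to the monic space, and it is the one genuinely monic-specific point. You assert that $\Breve{w}_{p,\sigma'}\le w_{p,\sigma'}$ pointwise ``and thus'' $\widehat{\Breve{w}_{p,\sigma'}}(h)\le\widehat{w_{p,\sigma'}}(h)$; pointwise domination never implies domination of Fourier transforms. In the non-monic proof the corresponding inequality is legitimate because the deleted summands are indicators of linear subspaces of $V^{\hom}(\FF_p)$, whose Fourier transforms are nonnegative. In the monic setting $V(\FF_p)$ is an affine space (the paper must fix an origin such as $u^n$), and the summand attached to a tuple $(P_2,\ldots,P_r)$ is the indicator of a coset that in general does not pass through the origin; its Fourier transform carries a phase and is not nonnegative, which is closely related to why Lemma \ref{lem:ftgen2} only achieves $O(p^{-(k+1/2)})$ off the origin when $d=n$. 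The repair is cheap and consistent with how the paper handles phases elsewhere in the monic case (``we then immediately bound each term by its absolute value''): off $V_{e_1,\FF_p}^\perp$ you only need $O(\cdot)$ bounds, so run your inclusion--exclusion identity and use the triangle inequality, bounding each correction term's transform by its $L^1$-mass. In doing so, retain the condition that a deleted tuple with some $P_i=x$ forces $x^{e_i}\mid f$: this contributes an extra factor $p^{-e_i}$, without which your stated order $p^{-(k-e_1-e_i+1)}$ for the corrections is not in fact $O(p^{-(k-e_1+2)})$ (the same looseness appears in the paper's own write-up of Lemma \ref{lem:ftgen_pointed}). With that factor included, the corrections are $O(p^{-(k-e_1+2)})$, and both the main term on $V_{e_1,\FF_p}^\perp$ (where, with the origin $u^n$ lying in the support, your constancy claim is fine) and the off-annihilator bounds come out as claimed.
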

    \item Our $\Psi_p$ is then supported on
    \[
    \widetilde{L}_p = \begin{cases*}
      V(\ZZ) & case \ref{it:0} \\
      \{g \in V(\ZZ) : g(2) \equiv g'(2) \equiv \cdots \equiv g^{(e_{1,p} - 1)}(2) \equiv 0 \mod p\} & case \ref{it:2} \\
      \{g \in V(\ZZ) : g(-2) \equiv g'(-2) \equiv \cdots \equiv g^{(e_{1,p} - 1)}(-2) \equiv 0 \mod p\} & case \ref{it:-2},
    \end{cases*}
    \]
    which is no longer a lattice but a coset $g_p + L_p$ for some fixed monic polynomial $g_p$ and some lattice $L_p$ of index dividing $p^n$. The Fourier transform $\widehat{\Psi}$ is small away from $L_p^\perp$. The intersection
    \[
    \widetilde{L}_q = \bigintsec_{p \mid q} \widetilde{L}_p = g_q + L_q
    \]
    is likewise a coset of a lattice (if nonempty). When we carry out the twisted Poisson summation, the translation $g_q$ contributes a twist factor $e^{-2\pi \sqrt{-1}g_q \bullet h}$ to the values of $\widehat{\Lambda}_q$. Because we then immediately bound each term by its absolute value, this twist factor drops out.
    \item The final step of Case I, bounding $X_q$, proceeds as follows:
    \begin{align*}
      X_q &\ll \prod_{p \mid q} O\(p^{-2k_p}\) \prod_{p \mid \frac{C}{q}} O\(p^{-2k_p - 1/2}\) \cdot \max\left\{H, \frac{C}{q}\right\}^{n} \\
      &= \frac{O(1)^{\omega(C)} \sqrt{q}}{\sqrt{C} D'^2} \max\left\{H, \frac{C}{q}\right\}^{n} \\
      &= \frac{O(1)^{\omega(C)}}{D'^2} \max\left\{\frac{H^n\sqrt{q}}{\sqrt{C}}, \(\frac{C}{q}\)^{n-1/2}\right\} \\
      &\leq \frac{O(1)^{\omega(C)}}{D'^2} \max\left\{ H^n, H^{n - 1/2 + n\delta} \right\} \\
      &= \frac{O(1)^{\omega(C)} H^n}{D'^2}.
    \end{align*}
  \end{itemize}

  \section{Counting \texorpdfstring{$G_3$}{G3}-polynomials}\label{sec:G3}
  
  Finally, we count the polynomials $f$ having $G_f \subseteq G_3$ for each odd $n \geq 3$ (the even case is subsumed by $G_1$, as we noted in stating Theorem \ref{thm:G_i}).
  
  \begin{theorem} \label{thm:G3}
    For $n \geq 3$ odd,
    \begin{align}
      \E_n(G_3; H) &\ll \begin{cases}
        H^2 \log^2 H & n = 3 \\
        H^{\frac{n+1}{2}} & n \geq 5
      \end{cases}  \label{eq:G3_nonmonic} \\
      \E_n^\monic(G_3; H) &\ll \begin{cases}
        H^2 & n = 3 \\
        H^2 \log H \log \log H & n = 5 \\
        H^{\frac{n-1}{2}} \log H & n \geq 7.
      \end{cases}
      \label{eq:G3_monic}
    \end{align}
  \end{theorem}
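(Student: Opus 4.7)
The plan is to leverage the algebraic fact that $G_f \subseteq G_3 = \<\1\> \cross S_n$ forces $f$ to be reducible over a quadratic extension $L/\QQ$. Indeed, the index-two subgroup $\{1\} \cross S_n$ of $G_3$ surjects onto $G_g = S_n$ and its fixed field $L \subseteq \widetilde K_f$ is a quadratic subfield, so choosing one root from each reciprocal pair yields a factorization $f(x) = a_0 \, u(x) \ba u(x)$ with $u(x) = \prod_i(x - \alpha_i)$ monic of degree $n$ and $\ba u$ its Galois conjugate over $\QQ$.

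First I would derive the algebraic constraints on $u$. Setting $\widetilde u(x) = x^n u(1/x)$, the reciprocity of $f$ forces $\widetilde u = c\, \ba u$ with $c = u(0)$, and in particular $N_{L/\QQ}(u(0)) = 1$; in the monic case this means $u(0) \in \OO_L^\cross$. Matching the remaining coefficients yields an involutory linear relation $c_{n-k} = A\, c_k$, with $A \in \Mat_2(\ZZ)$ an involution whose entries depend on $c_0$ and $d$, pairing $c_k$ with $c_{n-k}$ for each $1 \leq k \leq n-1$. Since $n$ is odd these pairs are disjoint, so $u$ is determined by $c_0$ together with the $(n-1)/2$ free upper-half coefficients $c_1, \dots, c_{(n-1)/2} \in \OO_L$.

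Next I would bound coefficient sizes and count lattice points. By writing out the top and middle coefficients of $u\ba u$ as explicit symmetric functions of $c_k, \ba c_k$ and using the bound $H/\size{a_0}$ on each, together with the pairing $c_{n-k} = Ac_k$, a careful induction (Mahler's inequality in the imaginary case, the middle coefficient identity in the real case) produces $\size{c_k} \ll (H/\size{a_0})^{1/2}$ in each Archimedean embedding. Writing $c_k = p_k + q_k \sqrt{d}$ with $p_k, q_k \in \ZZ$, this gives $\size{p_k} \ll (H/\size{a_0})^{1/2}$ and $\size{q_k} \ll (H/(\size{a_0}\size{d}))^{1/2}$, so each free $c_k$ contributes $O(H/(\size{a_0}\sqrt{\size{d}}))$ lattice points; the norm-one unit $c_0$ contributes an extra factor of $O(\log H)$ in the real-quadratic case and $O(1)$ in the imaginary case. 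Multiplying across the $(n-1)/2$ free coefficients and summing over $\size{d}$ and, in the non-monic case, over $\size{a_0} \leq H$ yields the claimed bounds.

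The main obstacle lies in the borderline cases $n \in \{3,5\}$, where the $d$-sum $\sum_{\size{d}}\size{d}^{-(n-1)/4}$ fails to converge. For $n \geq 7$ it converges, yielding $H^{(n-1)/2}\log H$ in the monic case and the analogous bound after the $\size{a_0}$-sum in the non-monic case. For $n = 5$ the sum contributes an additional $\log H$, refined to $\log H\log\log H$ via an average estimate on the fundamental unit $\epsilon_d$ (e.g., $\sum_{d\leq D}(\log\epsilon_d)^{-1} \ll \log D\log\log D$). For $n = 3$ the sum grows as $H^{1/2}$, absorbing the logarithmic factors, so cruder estimates already reach $H^2$ monic and $H^2\log^2 H$ non-monic. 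The hypothesis $G_g = S_n$ excludes only a set of lower-order measure and may be dropped when seeking upper bounds.
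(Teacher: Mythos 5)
Your overall strategy --- factoring $f$ over the at-most-quadratic field $L$ fixed by $G_f \intsec (\{\0\}\cross S_n)$, using reciprocity to pair $c_{n-k}$ with $c_k$, bounding the surviving $(n-1)/2$ coefficients by roughly $(H/\size{a_0})^{1/2}$ in each embedding, and summing over the field and the leading coefficient --- is the same as the paper's, and your monic count is essentially the paper's argument: there $u(0)$ really is a unit, the $c_k$ really lie in $\OO_L$, and the sum $\sum_{k}(H/\sqrt{k})^{(n-1)/2}\log H/\log\epsilon_k$ produces the three stated regimes. One slip there: your quoted estimate $\sum_{d\le D}(\log\epsilon_d)^{-1}\ll\log D\log\log D$ is false as stated (since $\log\epsilon_d\ll\sqrt{d}\log d$, that sum is polynomial in $D$); what you need, and what suffices, is the weighted bound $\sum_{d\le D} d^{-1}(\log\epsilon_d)^{-1}\ll\log\log D$, which already follows from $\log\epsilon_d\gg\log d$.

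The non-monic half has genuine gaps. First, when $f$ is not monic the roots $\alpha_i$ are not algebraic integers, so the coefficients of your monic factor $u$ need not lie in $\OO_L$, and the step ``write $c_k=p_k+q_k\sqrt{d}$ with $p_k,q_k\in\ZZ$'' fails; controlling the denominators is a real step. The paper handles it by normalizing the factor so that $h(1)\in\ZZ$ rather than making it monic, showing the content ideal is $\sqrt{(d)}$ for some divisor $d$ of the field discriminant (because its square is generated by a rational), and this changes the lattice count: the two integer coordinates of each coefficient range over $\ll\sqrt{H/(cd)}$ and $\ll\sqrt{H/(cd')}$ with $dd'=\size{k}$, and one needs the separate bounds $d,d'\ll H/c$ before summing. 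Second, in the non-monic case $c_0=u(0)$ is only a norm-one element of $L^\cross$, not a unit, so charging it $O(\log H)$ choices via the fundamental unit is wrong --- norm-one elements of bounded height are plentiful (Hilbert 90, Pythagorean-type parametrizations); the paper avoids this precisely by not taking the factor monic, so that all $(n+1)/2$ free coefficients are counted uniformly with no unit argument. Third, you never treat the degenerate possibility $G_f\isom S_n$ with $G_f=\{\0\}\cross S_n$, where $L=\QQ$ and $f$ factors over $\ZZ$ as $c\cdot h(x)\cdot x^n h(1/x)$; your quadratic parametrization collapses there, and this case is not negligible --- it contributes on the order of $H^{(n+1)/2}$, which is exactly the claimed bound for $n\geq 5$ in \eqref{eq:G3_nonmonic} --- so it must be counted separately (a short height argument, as in the paper, gives $\sum_{c}(H/c)^{(n+1)/2}\ll H^{(n+1)/2}$). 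With these repairs (content-ideal analysis, uniform coefficient count in place of the unit count, and the rational-reducible case) your outline matches the paper's proof.
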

  
\subsection{Heights}
We need a notion of height more general than the na\"ive height on integer polynomials used in Section \ref{sec:G2}.

If $P = [x_0:\cdots: x_N] \in \PP^{N}(K)$ is a point in projective space over a number field $K$, we define its (exponential, projective, Weil) \emph{height} (denoted $H(P)$ in Silverman \cite[\textsection VIII.5]{SilvermanAEC}) as follows:
\[
\Htp P = \prod_{v} \max\big\{ \size{x_0}_v, \ldots, \size{x_N}_v \big\}^{[K_v : \QQ_v] / [K : \QQ]}
\]
where the product is taken over the places $v$ of $K$, and the norm $\size{x}_v$ extends the usual $v$-adic norm on $\QQ$. This normalization ensures that the height is unchanged if $K$ is embedded into a larger field. There are two natural ways to define a height on a polynomial $f(x) = a_n x^n + a_{n-1} x^{n-1} + \cdots + a_0$ over a number field, and we distinguish the \emph{projective} and \emph{affine} heights
\begin{align}
  \Htp f &= \Htp\ [a_n : \cdots : a_0] \\
  \Ht f &= \Htp\ [a_n : \cdots : a_0 : 1].
\end{align}
For instance, if $f \in \ZZ[x]$ is nonzero, then
$\Ht f = \max\{\size{a_0}, \ldots, \size{a_n}\}$ is the na\"ive height already introduced in Section \ref{sec:recip_polys}, while $\Htp f = \Ht f/\ct f$ is smaller by a factor of the \emph{content} $\ct(f) = \gcd(a_0, \ldots, a_n)$. In particular, we define heights of algebraic numbers $\alpha$ by $\Ht \alpha = \Htp\ [\alpha : 1]$. The following properties should be noted:
\begin{itemize}
  \item If $\alpha$ is an algebraic integer with conjugates $\alpha = \alpha_1, \ldots, \alpha_n$, then
  \[
  \Htp \alpha = \prod_{i} \max\{1, \alpha_i\}^{1/n}
  \]
  is none other than the \emph{Mahler measure} of its minimal polynomial (suitably normalized).
  \item If $f$ has roots $\alpha_1, \ldots, \alpha_n$, then
  \begin{equation}\label{eq:height_poly_roots}
    2^{-n} \prod_{i} \Ht \alpha_i \leq \Htp f \leq 2^{n-1} \prod_i \Ht \alpha_i
  \end{equation}
  \cite[Theorem VIII.5.9]{SilvermanAEC}. In particular, for any factorization $f = g h$, we have
  \begin{equation}\label{eq:height_poly_fzn}
    \Htp f \asymp_{\deg f} \Htp g \cdot \Htp h.
  \end{equation}
\end{itemize}

  \subsection{Proof of Theorem \ref{thm:G3}}
  \begin{proof}
  Under the conditions, there are only two possibilities for $G_f$: either $G_f \isom G_3 \cong S_2 \cross S_n$, or $G_f \cong S_n$. In the latter case, $f$ is reducible and factors as a product
  \[
    f(x) = c \cdot h(x) \cdot x^n h(1/x), \quad c \in \ZZ, \quad h \in \ZZ[x].
  \]
  By taking $|c| = \ct f$, we may assume that $\ct h = 1$. We have $\Htp f = \Ht f/|c| \leq H/|c|$, so by \eqref{eq:height_poly_fzn},
  \[
    \Ht h = \Htp h \ll \sqrt{\frac{H}{|c|}}.
  \]
  As $h$ has $n+1$ free coefficients, the number of polynomials $f$ we get is
  \begin{align*}
    &\ll \sum_{c = 1}^H \(\frac{H}{c}\)^{\frac{n+1}{2}} \ll H^{\frac{n+1}{2}}.
  \end{align*}

  We now assume that $G_f = G_3$. In this case there is a quadratic field $K_2 = \QQ(\sqrt{k})$, where $k$ is a squarefree integer, such that $K_f = K_g \cdot K_2$, so that over $K_2$, the polynomial $f$ factorizes:
  \begin{equation} \label{eq:fzn_G3}
    f(x) = c' \cdot h(x) \cdot x^n h(1/x), \quad c' \in K_2, \quad h \in K_2[x].
  \end{equation}
  We cannot assume that this factorization holds in $\OO_{K_2}[x]$ because $\OO_{K_2}$ need not be a UFD. However, we can rescale $h$ so that $h(1) \in \ZZ$. Then $c' = f(1) / h(1)^2 \in \QQ$, and the contents $c = \ct(f)$ and $\aa = \ct(h)$ satisfy
  \[
    c \OO_{K_2} = c' \cdot \aa^2.
  \]
  Therefore $\aa^2 = (c/c')$ is principal, generated by a rational number; in particular, $\aa = \ba{\aa}$ is self-conjugate. The self-conjugate ideals in a quadratic field are simply the rational multiples of products of ramified primes. Thus, after rescaling by a rational scalar, we have that there is a positive divisor $d \mid k$ such that
  \[
    \aa = \sqrt{(d)} = \begin{cases}
      \<d, \sqrt{k}\>, & k \equiv 2, 3 \mod 4 \\
      \ds \<d, \frac{k + \sqrt{k}}{2}\>, & k \equiv 1 \mod 4,
    \end{cases}
  \]
  and $c' = \pm c/d$. Note that $\ba{h}(x)$ and $x^n h(1/x)$ have the same roots, and comparing values at $x = 1$, they must be equal. Thus the coefficients $\theta_i \in K_2$ of $h(x) = \sum_{i = 0}^n \theta_i x^i$ satisfy
  \begin{equation} \label{x:refl_G3}
    \theta_i = \ba{\theta_{n-i}}.
  \end{equation}
  We now bound the height of the $\theta_i$. This requires us to control the affine/projective height ratio
  \[
    \frac{\Ht h}{\Htp h} = \prod_{v}
    \(\frac{\max\bigl\{\size{\theta_0}_v, \ldots, \size{\theta_n}_v\bigr\}}{\max\bigl\{ \size{\theta_0}_v, \ldots, \size{\theta_n}_v, 1\bigr\}}\)^{[(K_2)_v : \QQ_v] / [K_2 : \QQ]}.
  \]
  We consider the contributions of the different places $v$ in turn:
  \begin{itemize}
    \item If $v = p \mid d$ is finite, then the numerator is $\size{\aa}_p = 1/\sqrt{p}$, while the denominator is $1$. As $[(K_2)_v : \QQ_v] = [K_2 : \QQ] = 2$, we get a contribution $1/\sqrt{p}$ in this case.
    \item For all other finite $v$, the numerator and denominator are both $1$.
    \item If $v$ is infinite, the parenthesized fraction is $1$ because, by \eqref{x:refl_G3},
    \[
      \size{\theta_0}_v \cdot \size{\theta_n}_v = \size{\theta_0}_v \cdot \size{\ba{\theta_0}}_v = d \cdot \frac{\size{f(0)}}{c} \geq 1.
    \]
  \end{itemize}
  Hence
  \[
    \Ht h = \frac{1}{\sqrt{d}} \Htp h \asymp \sqrt{\frac{\Htp f}{d}} = \sqrt{\frac{\Ht f}{c d}}\leq \sqrt{\frac{H}{c d}}.
  \]
  Write
  \[
    \theta_i = \frac{du_i + v_i \sqrt{k}}{2}, \quad u_i, v_i \in \ZZ
  \]
  and observe that not \emph{all} the $u_i$ nor \emph{all} the $v_i$ can be zero, or $f$ would be reducible. Now some coefficient $du_i$ of $h + \ba{h}$ is nonzero and thus at least $d$ in absolute value. So
  \[
    d \leq \Ht(h + \ba{h}) \ll \Ht(h) \ll \sqrt{\frac{H}{cd}},
  \]
  which gives us the bound 
  \[
    d \ll \frac{H}{c}.
  \]
  Analogously, analyzing the $v_i$ by considering $h - \ba{h}$ gives us the bound
  \begin{equation*}
     d' \coloneqq \frac{\size{k}}{d} \ll \frac{H}{c}.
  \end{equation*}
  Since $d u_i$ and $v_i \sqrt{\size{k}}$ are bounded by $\Ht h$, the numbers of possibilities for each $u_i$ and $v_i$ are $\ll \sqrt{H/c d}$ and $\ll \sqrt{H/c d'}$ respectively, and so the total number of polynomials is 
  \begin{align*}
    E_n(G_3; H) &\ll \sum_{c = 1}^H \sum_{d \ll \frac{H}{c}} \sum_{d' \ll \frac{H}{c}} \(\sqrt{\frac{H}{c d}} \sqrt{\frac{H}{c d'}}\)^{\frac{n+1}{2}} \\
    &= H^{\frac{n+1}{2}} \sum_{c = 1}^H c^{-\(\frac{n+1}{2}\)}\Biggl(\sum_{d \ll \frac{H}{c}} d^{-\(\frac{n+1}{4}\)} \Biggr)^2.
  \end{align*}
  If $n \geq 5$, the two remaining sums are both $O(1)$, and we get a bound of $O(H^{\frac{n+1}{2}})$. If $n = 3$, we get
  \begin{align*}
    E_n(G_3; H) &\ll H^{\frac{n+1}{2}} \sum_{c = 1}^H c^{-\(\frac{n+1}{2}\)} \log^2 H = H^{\frac{n+1}{2}} \log^2 H = H^2 \log^2 H. \qedhere
  \end{align*}
\end{proof}

\begin{rem}
  The ``$\ll$'' in Theorem \ref{thm:G3} can be sharpened to ``$\asymp$'' in the non-monic case by checking that a positive proportion of choices of the independent variables $c, d, d', u_i, v_i$ satisfy the needed conditions:
  \begin{itemize}
    \item $d$ and $d'$ are squarefree and coprime;
    \item $u_i$ and $v_i$ satisfy the conditions mod $2$ so that $\theta_i \in \aa$;
    \item and the $\theta_i$ do not all lie in an ideal strictly smaller than $\aa$.
  \end{itemize}
  All these can be managed using an appropriate form of the squarefree sieve. We omit the details.
\end{rem}  

\subsection{Remarks on the monic case}
  The monic case is proved similarly, but the possibilities are more restricted because we can factor $f(x) = \pm h_1(x) \cdot x^n h_1(1/x)$ over $\OO_{K_2}$, where $h_1(x)$ is monic. This scaling $h_1$ of $h$ may or may not be compatible with the one used above, but since $\ct(h_1) = (1)$, we derive that $\aa = (\theta)$ is principal. Since $\aa$ is known to be self-conjugate, we have a relation
  \begin{equation} \label{x:scp}
    \theta = \epsilon \ba{\theta}
  \end{equation}
  where $\epsilon \in \OO_{K_2}^\cross$ is a unit (necessarily of norm $1$). The relation \eqref{x:scp} determines $\theta$ up to scaling by $\QQ^\cross$; namely
  \[
    \theta = w(1 + \epsilon) \quad \text{and/or} \quad \theta = w\sqrt{k}(1 - \epsilon), \quad w \in \QQ^\cross,
  \]
  the label ``and/or'' being used because these formulas become invalid if $\epsilon = -1$, respectively $\epsilon = 1$. In particular, $\epsilon$ determines $d$. Moreover, since $\theta$ can be rescaled by a unit, it follows that rescaling $\epsilon$ by a square of a unit does not affect $d$. Since $\size{\OO_{K_2}^\cross / (\OO_{K_2}^\cross)^2}$ is uniformly bounded (at most $4$), there are $O(1)$ possible values of $d$ for each $k$. Each of the coefficients $\theta_1,\ldots,\theta_{(n-1)/2}$ has $O(H/\sqrt{k})$ values, as above. As to $\theta_0$, we have that $\eta = \theta_0^2 / d$ is a unit and determines $\theta_0$ up to sign. We have $\Ht \theta_0 \ll \sqrt{H/d}$ so $\Ht \eta \ll H$. Up to the finite group of roots of unity, $\eta$ is a power $\eta_1^m$ of the fundamental unit $\eta_1$ of $K_2$. We have $\size{\eta_1} \gg \sqrt{k}$, so the number of possibilities for $\eta$ is $\ll \log H/\log k$, for a grand total of
  \begin{equation*}
    \E_n^{\monic}(G_3; H) \ll \sum_{2 \leq k \ll H^2} \(\frac{H}{\sqrt{k}}\)^{\frac{n-1}{2}} \cdot \frac{\log H}{\log k}.
  \end{equation*}
  Estimating this sum yields the claimed bounds. In contrast to the non-monic case, it is unclear whether our bounds are sharp, specifically in the $n = 3$ and $n = 5$ cases. A more accurate count of monic $G_3$-polynomials demands a delicate understanding of the distribution of sizes of fundamental units in real quadratic fields. 
  
  \bibliography{Background/ourbib}
  \bibliographystyle{plain}
\end{document}